\documentclass[reqno, 12pt]{amsart}
\def\C{{\mathbb C}}

\def\Q{{\mathbb Q}}
\def\Qp{{\mathbb Q}_p}
\def\Z{{\mathbb Z}}
\def\Zp{{\mathbb Z}_p}

\def\ord{\operatorname{ord}}

\def\F{{\mathbb F}}
\def\Qp {{{\mathbb Q}_p}}
\def\Fq{{{\mathbb F}_q}}

\newtheorem{lemma}{Lemma}
\newtheorem{cor}{Corollary}
\newtheorem{prop}{Proposition}

\newtheorem{theorem}{Theorem}

\theoremstyle{definition}
\newtheorem{defn}{Definition}   

\newtheorem{question}{Question}

\theoremstyle{remark}
\newtheorem{rem}{Remark}        
      
\newtheorem{example}{Example}

\topmargin -0.3in
\headsep 0.3in
\oddsidemargin 0in
\evensidemargin 0in
\textwidth 6.5in
\textheight 9in

%\binom{a}{b}

\begin{document}
\title[$\zeta$-phenomenology]
{$\zeta$-phenomenology}
%\subjclass{Primary 11G09}
%\keywords{Drinfeld modules, $T$-modules, characteristic $p$ $L$-series,
%$L$-zeroes, multi-valued operators}
\author{David Goss}
\thanks{}
\address{Department of Mathematics\\The Ohio State University\\231
W.\ $18^{\rm th}$ Ave.\\Columbus, Ohio 43210}

\email{goss@math.ohio-state.edu}
\thanks{This paper is dedicated to  my wife Rita.}
\date{Winter, 2010}

\begin{abstract}
It is well known that Euler experimentally discovered the functional equation
of the Riemann zeta function. Indeed he detected the fundamental
$s\mapsto 1-s$ invariance of $\zeta(s)$ by looking only at special values.
In particular, via this functional equation, the permutation group on two letters, $S_2\simeq\Z/(2)$, is realized 
as a group of symmetries of $\zeta(s)$. In this paper, we use the theory of
special-values of our characteristic $p$ zeta functions to experimentally
detect a natural symmetry group $S_{(q)}$ for these functions
of cardinality ${\mathfrak c}=2^{\aleph_0}$ (where $\mathfrak c$
is the cardinality of the continuum); $S_{(q)}$ is a realization of the permutation 
group on $\{0,1,2\ldots\}$ as homeomorphisms
of $\Zp$ stabilizing both the nonpositive and nonnegative integers.  
We present a number of distinct instances in which $S_{(q)}$ acts (or appears to
act) as symmetries of our functions.  In particular, we 
present a natural, but highly mysterious, action of $S_{(q)}$ on a large subset of the
domain of our functions that appears to stabilize zeta-zeroes. 
As of this writing, we do not
yet know an overarching formalism that unifies these examples; however,
it would seem that this formalism will involve an interplay between the $1$-unit group
$U_1$ -- playing the role of a ``gauge group'' -- and $S_{(q)}$.
Furthermore, we show that $S_{(q)}$ may be naturally realized as an automorphism 
group of the convolution algebras of characteristic $p$ valued measures.
\end{abstract}

\maketitle

%****************************************************************************
\section{Introduction}\label{intro}
Euler's work on the zeta function has been an inspiration to us for
many years. This work is briefly summarized in our Section \ref{discovery},
but we highly recommend \cite{ay1} to the reader. Euler was able to compute
the values of the Riemann zeta function at the positive even integers
and at the negative integers. By very cleverly comparing them, he found
the basic symmetry given by the famous functional equation of $\zeta(s)$.
In particular, the lesson Euler taught us was that the special values are
a window allowing one to glimpse very deep internal structure of the zeta
function.

In the characteristic $p$ theory, we have long had good results on
special values in the basic case where the base ring $A$ is
$\Fq[T]$. At the positive integers, one had the brilliant analog
of Euler's results due to L.Carlitz in the 1930's and 1940's
(complete with an analogs of
$2\pi i$, Bernoulli numbers, factorials, etc.). At the negative integers,
one also had good formulas for the values of the characteristic
$p$ zeta function. However, all attempts to put the positive and negative
integers together in an ``$s\mapsto 1-s$'' fashion failed.

The theory of these characteristic $p$ functions works for any of 
Drinfeld's rings $A$ (the affine ring of a complete smooth curve over
$\Fq$ minus a fixed closed point $\infty$). It is however substantially
harder to do explicit calculations for general $A$ and so there are not yet
many specific examples to study.

In the 1990's Dinesh Thakur \cite{th1} looked at the ``trivial-zeroes'' of these
zeta-functions for certain nonpolynomial rings $A$. He found the intriguing
phenomenon that such trivial-zeroes may have a higher order of vanishing
than naturally arises from the theory (current theory only gives a very 
classical looking 
lower bound on this vanishing in general, not the exact order!). More recently 
Javier Diaz-Vargas \cite{dv2} extended Thakur's calculations. Both Thakur and
Diaz-Vargas experimentally found that this general higher vanishing at $-j$
appears to be associated with $j$ of a very curious type: the sum of
the $q$-adic digits of $j$ must be bounded. 

We call a trivial zero ``regular'' if its order is exactly the classical
lower bound and ``irregular'' otherwise. (See Subsection \ref{classbern}
for a discussion of this terminology.)

We have been trying to come to grips with the implications of Thakur's
and Diaz-Vargas's inspired calculations for a few years; see \cite{go2}. 
Recently we discovered a huge group of symmetries that seems to 
underlie their calculations. It is our purpose here to describe this group
and its relationship to their calculations as well as other instances
when it arises. In particular, we shall see how this group conjecturally
allows one to establish certain {\it finiteness} results on trivial
zeroes for characteristic $p$ zeta-functions. Moreover, calculations
in the polynomial ring case also indicate that this group acts on the
zeroes of the zeta-function (see Subsection \ref{assocomp}).

This paper is written to quickly explain these implications to the reader.
We try as much as possible to stay away from general theory and keep
the paper as self-contained as possible. 

The symmetry group $S_{(q)}$ is introduced in Section \ref{sp}. It is
a group of homeomorphisms of $\Zp$ obtained by simply rearranging the
$q$-expansion coefficients. 
In particular, we readily see that
$S_{(q)}$ stabilizes {\it both} the nonpositive and nonnegative integers; 
there is no mixing as in $s\mapsto 1-s$. Thus,
perhaps, we have the ``true''
explanation for  the failure to somehow put the positive and negative integers
together as Euler did.  We will see in Section \ref{negev} that the action
of $S_{(q)}$ appears to preserve the orders of trivial zeroes coming
from Thakur's and Diaz-Vargas' calculations (which is how
we actually discovered it).

Moreover, all may not be lost here in terms of relating the positive and
negative integers. Indeed, we shall see in Section \ref{posev} that 
 $S_{(q)}$ may also be realized as symmetries of
Carlitz's ``von Staudt-Clausen'' result where he calculated the denominator
of his Bernoulli analogs at the positive integers in the basic $\Fq[T]$-case.
This is very exciting and highly mysterious to us.

Remarkably, we further establish for $A=\Fq[T]$
that $S_{(q)}$ determines the degree of the ``special polynomials'' (see Definition
\ref{special})
that arise in the theory; i.e., these degrees are an {\it invariant} for the
action of $S_{(q)}$. Since the trivial zeroes are zeroes of the special
polynomials, knowledge of the degrees of these polynomials would obviously
bound the orders of trivial zeroes.

As is well-known by now, these characteristic $p$ zeta-functions are 
analytically continued by ``summing according to degree''. For each fixed
degree $d$, one obtains continuous functions from $\Zp$ into our finite
characteristic fields. We further show that the zero sets of these functions,
as subsets of $\Zp$, 
are {\it stable} under the action of $S_{(q)}$, and we present some evidence
that the same result holds for the $v$-adic interpolations of $\zeta(s)$ at
the finite primes $v$. 

We shall frequently use the essential paper of J. Sheats 
\cite{sh1} which, in particular,
established rigorously some results mentioned much
earlier by L. Carlitz \cite{ca4}. Sheats did this as he proved
the ``Riemann hypothesis for $\Fq[T]$''. In other words, our functions
are naturally defined on the space $\mathbb S_\infty$ (See Definition
\ref{inf5}) which is a product of a characteristic $p$ variable $x$
and  $p$-adic variable $y$; Sheats then shows that, upon fixing $y$, 
all zeroes in $x^{-1}$ 
are simple and uniquely determined by their absolute values.
So, Galois invariance immediately implies that the zeroes must lie
on the line $\Fq((1/T))$.

Note that the classical functional equation, $s\mapsto 1-s$ becomes
$t\mapsto -t$ upon setting $s=1/2+it$ and obviously 
$\vert t\vert=\vert -t\vert$.

In the function field situation, our whole theory begins by choosing
a notion of ``positive'' (generalizing the notion of ``monic polynomial'')
and then fixing a positive parameter $\pi$. So for instance if we use
the standard notion of monic, one could choose $\pi=1/T$. Once
we have chosen $\pi$, we are then able to ``exponentiate ideals'' and define
$L$-series. Passing from one positive parameter $\pi_1$ to another
$\pi_2$ multiplies the $x$-coordinate of a zeta-zero 
with a certain power of the ``phase'' $u=\pi_1/\pi_2$ (which is a $1$-unit,
see Lemma \ref{depend1}).
Clearly multiplication by a power of $u$ does not change the absolute value of
the $x$-coordinate of a given zero, but it {\em does} alter the 
various expansions of the zeroes
in terms of $\pi_1$ and $\pi_2$ (as described in Subsection \ref{assocomp}).

In Subsection \ref{assocomp} we shall also present examples, in
the polynomial case, where zeta-zeroes are acted upon via digit 
permutations and the group $S_{(q)}$. More specifically one takes the expansion of a zero in a parameter $\pi$ and then permutes the coefficients of this
expansion in a prescribed
way to obtain other zeroes. This does NOT work for all the coefficients
but only seems to for those coefficients  which are {\em invariant} under
the change from one parameter to another! (So in this regard, we are obtaining
finer information than simply the absolute value.) This passing to the
invariants is also what gives the analogy to the use of gauge groups
in physics. 

Much more work will need to be done before we are able to place the
ideas presented here in their proper context.

Finally in \ref{classbern} we discuss how the notions of extra vanishing, etc.,
also arises in the classical theory of Bernoulli numbers. This 
builds on the famous results of von Staudt-Clausen, Adams, and Kummer.

This paper grew out of my lecture at the workshop ``Noncommutative
geometry and geometry over the field with one element'' at 
Vanderbilt University in May, 2008 as well as lectures at
subsequent conferences. It is my great pleasure to thank the
organizers of these very interesting meetings
for their kind hospitality and support.

\section{Euler's creation of $\zeta$-phenomenology}\label{discovery}
We recall here very briefly the fabulous first 
example of $\zeta$-phenomenology: Euler's numerical discovery of the
functional equation of the Riemann zeta function $\zeta (s)$.
Our treatment here follows that of \cite{ay1}; we have also
covered these ideas in \cite{go2}.

\begin{defn}\label{bernou}
The Bernoulli numbers, $B_n$, are defined by
$$\frac{x}{e^x-1}=\sum_{n=0}^\infty \frac{B_nx^n}{n!}\,.$$
\end{defn}

After many years of work, Euler computed the values
$\zeta (2n)$, $n=1,2\cdots$ in terms of Bernoulli numbers and obtained the
famous formula
\begin{equation}\label{even}
B_{2n}=(-1)^{n+1}\frac{2(2n)!}{(2\pi)^{2n}}\zeta(2n)\,.
\end{equation}

Euler
then turned his attention to the values of $\zeta (s)$ at the negative
integers where his work on special values becomes divinely
inspired! Indeed, Euler did not have the notion of analytic continuation
of complex valued functions to work with. Thus he 
relied on his instincts for beauty while working with
divergent series; nevertheless, he obtained the right values.

Euler begins with the very well known expansion
\begin{equation}\label{euler1}
\frac{1}{1-x}=1+x+x^2+x^3+\cdots+ x^n +\cdots\,.
\end{equation}
Clearly this expansion is only valid when $|x|<1$, but that does not
stop Euler. Upon putting $x=-1$, he deduces
\begin{equation}\label{euler2}
1/2=1-1+1-1+1\cdots\,,
\end{equation}
where we simply ignore questions of convergence!
He then applies the operator $x\frac{d~}{dx}$ to
Equation \ref{euler1} and again evaluates at $x=-1$ obtaining
\begin{equation}\label{euler3}
1/4=1-2+3-4+5\cdots \,.\end{equation}
Applying the operator again, Euler finds the ``trivial zero''
\begin{equation}\label{euler4}
0=1-2^2+3^2-\cdots\,,\end{equation}
and so on. Euler recognizes the sum on the right of these equations to
be the values at the negative integers of the modified $\zeta$-function
\begin{equation}\label{euler5}
\zeta^\ast(s):=(1-2^{1-s})\zeta(s)=\sum_{n=1}^\infty (-1)^{n-1}/n^s \,.
\end{equation}
The wonderful point is, of course, that these values {\em are} the values
rigorously obtained much later by Riemann. (N.B.: in \cite{ay1}, our
$\zeta^\ast(s)$ is denoted $\phi(s)$.)

Nine years later, Euler notices, at least for small $n\geq
2$, that his calculations imply 
\begin{equation}\label{euler6}
\frac{\zeta^\ast(1-n)}{\zeta^\ast(n)}= \begin{cases}\frac{(-1)^{(n/2)+1}(2^n-1)(n-1)!}{(2^{n-1}-1)\pi^n} & \text{if $n$ is even}\\
0& \text{if $n$ is odd.}\end{cases}\end{equation}
Upon rewriting Equation \ref{euler6} using his gamma function
$\Gamma (s)$ and the cosine, Euler then ``hazards'' to conjecture
\begin{equation}\label{euler7}
\frac{\zeta^\ast (1-s)}{\zeta^\ast (s)}=\frac{-\Gamma (s)(2^s-1)\cos (\pi s/2)}{
(2^{s-1}-1)\pi^s}\,,\end{equation}
which translates easily into the functional equation of $\zeta(s)$!

\begin{rem}\label{eulerrem1}
Note the important role played by the trivial zeroes in Equation
\ref{euler6} in that they
render harmless our inability to calculate explicitly $\zeta^\ast (n)$, 
or $\zeta (n)$, at odd integers $>1$.
\end{rem}

Euler then calculates both sides of Equation \ref{euler7} at
$s=1$ and obtains the same answer. To Euler, this is ``strong
justification'' for his conjecture. Of course history has proved him to
be spectacularly right! 

From now on, until \ref{classbern} where we return to classical theory,
the symbol ``$\zeta(s)$'' will be reserved for characteristic
$p$ valued functions.

\section{The factorial ideal}\label{factideal}
In order to later define Bernoulli elements in characteristic $p$, and
so explain Carlitz's von Staudt-Clausen result,  we clearly
need a good notion of ``factorial''. 
 
We begin by reviewing the basic set-up of the characteristic $p$ theory. 
We let $q=p^{n_0}$ where
$p$ is prime and $n_0$ is a positive integer. Let $X$ be a smooth
projective geometrically connected curve over the finite field
$\Fq$ with $q$-elements. Choose $\infty$ to be a fixed closed point
on $X$ of degree $d_\infty$ over $\Fq$. Thus $X-\infty$ is an affine curve and we let $A$ be the ring of
its global functions. Note that $A$ is a Dedekind domain with finite
class group and that $A^\ast=\Fq^\ast$. We let $k$ denote the quotient
field of $A$. The completion of $k$ at $\infty$ is denoted $k_\infty$ and
the completion of a fixed algebraic closure of $k_\infty$ (under the
canonical topology) is denoted ${\mathbb C}_\infty$. We let
$\F_\infty \subset k_\infty$ be the associated finite field. Set
$q_\infty:=q^{d_\infty}$ so that $\F_\infty \simeq \F_{q_\infty}$.

Of course the simplest example of such an $A$ is $\Fq[T]$,
$k=\Fq(T)$. In general
though, $A$ will not be Euclidean or factorial.

Let $x$ be a transcendental element. 

\begin{defn}\label{fact1}
1. For $i=1,2,\cdots$, we set $[i](x):=x^{q^i}-x$.\\
2. We define $L_0(x)\equiv 1$ and for $i=1,2,\cdots$, we set $L_i(x):=[i](x)[i-1](x)\cdots[1](x).$\\
3. We define $D_0(x)\equiv 1$ and for $i=1,2,\cdots$, we set 
$D_i(x):=[i](x)[i-1](x)^q\cdots[1](x)^{q^{i-1}}$.
\end{defn}

Elementary considerations of finite fields allow one to show the following
proposition (see Prop.\ 3.1.6 \cite{go1}). In it, the elements just defined
in Definition \ref{fact1} are considered as members of the polynomial ring
$\Fq[x]$.
\begin{prop}\label{easy}
{\rm 1.} $[i](x)$ is the product of all monic irreducible polynomials in $x$ whose
degree divides $i$.\\
{\rm 2.} $L_i(x)$ is the least common multiple of all polynomials in $x$ of degree
$i$.\\
{\rm 3.} $D_i(x)$ is the product of all monic polynomials in $x$ of degree
$i$. 
\end{prop}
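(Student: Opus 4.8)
The plan is to prove Proposition \ref{easy} by exploiting the basic fact that, for a field $\Fq$ and an integer $i\geq 1$, the polynomial $x^{q^i}-x$ is precisely the product of all monic irreducible polynomials in $\Fq[x]$ whose degree divides $i$ — equivalently, $\Fq[x]/(f)$ embeds in $\F_{q^i}$ exactly when $\deg f \mid i$, and $\F_{q^i}$ is the splitting field of $x^{q^i}-x$ over $\Fq$ with all roots simple. I would first establish part (1) from this standard characterization of $\F_{q^i}$ as the set of roots of $x^{q^i}-x$: an irreducible $f$ of degree $d$ divides $[i](x)$ iff a root of $f$ lies in $\F_{q^i}$ iff $\F_{q^d}\subseteq \F_{q^i}$ iff $d\mid i$; simplicity of the roots of $x^{q^i}-x$ (its derivative is $-1$) shows each such $f$ occurs to the first power, and a degree count ($\deg [i](x)=q^i=\sum_{d\mid i} d\cdot(\text{number of monic irreducibles of degree }d)$) confirms nothing is missing.

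For part (2), I would argue that $L_i(x)$ is the lcm of all polynomials of degree $i$ by reducing to the lcm of all \emph{monic irreducibles} of degree $\leq i$: a monic irreducible $f$ of degree $d\leq i$ divides some polynomial of degree $i$ (e.g.\ $f$ times a suitable monic polynomial of degree $i-d$), and conversely any polynomial of degree $i$ is a product of monic irreducibles each of degree $\leq i$, each appearing to the first power in the lcm since one can choose a degree-$i$ multiple of $f$ not divisible by $f^2$. Thus the lcm in question equals $\prod_{d\mid i,\ d\leq i}\ \bigl(\text{sorry, }d\text{ ranges over all }d\leq i\bigr)$ — more precisely $\prod_{d=1}^{i}(\text{product of monic irreducibles of degree }d)$, and using part (1) this telescopes: $\prod_{d=1}^i \bigl(\prod_{e\mid d}\text{irred.\ of deg }e\bigr)$ is not quite $L_i$, so the cleaner route is to note directly that an irreducible of degree $d$ divides $[j](x)$ for every multiple $j\leq i$ of $d$, hence divides $L_i(x)=\prod_{j=1}^i [j](x)$ to the first power (no $[j](x)$ is divisible by its square), and that $d\leq i$ guarantees at least one such $j$ (namely $j=d$). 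This gives $L_i(x)=\prod_{d=1}^i(\text{product of monic irreducibles of degree }d)$, which is exactly the lcm of all degree-$i$ (indeed all degree-$\leq i$) polynomials.

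For part (3), the product of all monic polynomials of degree $i$, I would proceed by induction on $i$, or more slickly by a direct evaluation-at-roots argument. The identity $D_i(x)=[i](x)D_{i-1}(x)^q$ (immediate from Definition \ref{fact1}) should mirror a recursion for $P_i(x):=\prod_{\deg g = i,\ g\text{ monic}} g$: every monic $g$ of degree $i$ can be written as $g = h\cdot(\text{linear or constant adjustment})$ — concretely, writing $g(x)=\prod_{\alpha}(x-\alpha)$ over $\F_{q^i}$ is not directly helpful, so instead I would use that a monic $g$ of degree $i$ is determined by its $i$ lower coefficients, and group the monic polynomials of degree $i$ by their image modulo $[i](x)$; since $\Fq[x]/([i](x))\cong \prod_{d\mid i}\Fq[x]/(f)^{\times(\ldots)}$ is messy, the honest approach is the induction: each monic $g$ of degree $i$ is uniquely $g(x) = f(x)^{q}\cdot(\ldots)$ — again not clean. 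The reliable path is: $\prod_{g\text{ monic},\ \deg g=i} g(x)$ can be computed by noting $\prod_{\deg g = i} g(x) = \prod_{c\in\Fq^{i}} \bigl(x^i + c_{i-1}x^{i-1}+\cdots+c_0\bigr)$ and using that $[i](x)$ is exactly the product of the irreducibles, combined with the combinatorial count of how many monic polynomials of degree $i$ are divisible by a given irreducible of degree $d\mid i$ — namely $q^{i-d}$ — and to the precise power $\sum_{k\geq 1} q^{i-kd} = q^{i-d}+q^{i-2d}+\cdots$; summing the exponent contributions and matching against the exponent of each $[j](x)$, $j\mid i$, in $D_i(x)=\prod_{j=1}^i[j](x)^{q^{i-j}}$ yields the claim.

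The main obstacle I anticipate is part (3): getting the multiplicities exactly right. The heart of it is the counting lemma that among the $q^i$ monic polynomials of degree $i$, the total multiplicity with which a fixed monic irreducible $f$ of degree $d$ (with $d\mid i$, say $i=md$) appears in $\prod_{\deg g = i} g$ is $\sum_{k=1}^{m} q^{i-kd}$, because the number of monic degree-$i$ polynomials divisible by $f^k$ is $q^{i-kd}$ for $k\leq m$ and $0$ otherwise. One must then check this agrees with the multiplicity of $f$ in $D_i(x) = [m d/? ]$ — i.e.\ in $\prod_{j=1}^{i}[j](x)^{q^{i-j}}$: since $f\mid [j](x)$ iff $d\mid j$, the multiplicity of $f$ there is $\sum_{d\mid j,\ 1\leq j\leq i} q^{i-j} = \sum_{k=1}^{m} q^{i-kd}$, which matches. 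I would present part (3) via this clean double-count rather than an induction, since the induction hides exactly this arithmetic. Throughout, I would invoke only elementary finite-field theory (Frobenius, the structure of $\F_{q^i}$, separability of $x^{q^i}-x$), consistent with the paper's stated desire to stay close to first principles, and cite Prop.\ 3.1.6 of \cite{go1} for readers wanting the details spelled out.
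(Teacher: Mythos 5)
The paper itself does not prove Proposition \ref{easy}; it simply cites Prop.\ 3.1.6 of \cite{go1}, so your proposal is filling in a genuine gap. Your Part (1) is correct and standard. Your Part (3) is correct once you drop the unnecessary restriction ``$d\mid i$'': for \emph{any} monic irreducible $f$ of degree $d\leq i$, the number of monic polynomials of degree $i$ divisible by $f^k$ is $q^{i-kd}$ for $kd\leq i$ and $0$ otherwise, giving total multiplicity $\sum_{k\geq 1,\,kd\leq i} q^{i-kd}$; and in $D_i(x)=\prod_{j=1}^i[j](x)^{q^{i-j}}$, $f$ divides $[j](x)$ precisely when $d\mid j$, giving the same sum $\sum_{k\geq 1,\,kd\leq i}q^{i-kd}$. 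So the multiplicities match for every $d\leq i$, which is what is needed.

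Part (2), however, contains a real error. You assert that in the lcm of all degree-$i$ monic polynomials, each irreducible appears to the first power, and you conclude $L_i(x)=\prod_{d=1}^i(\text{product of monic irreducibles of degree }d)$. Both claims are false. An irreducible $f$ of degree $d\leq i$ divides $f^{\lfloor i/d\rfloor}\cdot g$ for any monic $g$ of degree $i - d\lfloor i/d\rfloor$, and this is a monic polynomial of degree $i$; hence the lcm must be divisible by $f^{\lfloor i/d\rfloor}$, not merely $f$. Concretely, for $q=2$, $i=2$: $L_2(x)=[2](x)[1](x)=(x^4-x)(x^2-x)=x^2(x+1)^2(x^2+x+1)$, with $x$ and $x+1$ appearing to the second power, whereas your formula would give $x(x+1)(x^2+x+1)$. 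Similarly, your observation that ``no $[j](x)$ is divisible by its square'' is correct but irrelevant: a given $f$ of degree $d$ divides \emph{every} $[j](x)$ with $d\mid j$ and $j\leq i$, so its multiplicity in $L_i(x)=\prod_{j=1}^i[j](x)$ is $\lfloor i/d\rfloor$, not $1$. The good news is that this is exactly the multiplicity required in the lcm, so the proposition is true and the correct proof is a double-count entirely parallel to the one you already give for Part (3): match the multiplicity $\lfloor i/d\rfloor$ of each irreducible of degree $d\leq i$ on both sides. You should restructure Part (2) to use that argument rather than the erroneous ``first power'' claim.
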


As we will readily see later on (Proposition \ref{easy1})
the polynomials $D_i(x)$ and
$L_i(x)$ are universal for the exponential and logarithm of general
Drinfeld modules.

Our next goal is to use the functions $D_i(x)$ to define a factorial
function \`a la Carlitz. Let $j$ be an integer that we write
$q$-adically as $j=\sum_{e=0}^w c_eq^e$ where $0\leq c_e<q$ all $e$.

\begin{defn}\label{factorial}
We set
$$\Pi_j(x):=\prod_{e=0}^w D_e(x)^{c_e}\,.$$
\end{defn}
As a function of the integer $j$, $\Pi_j(x)$ satisfies many of the
same divisibility results as the classical $n!$.

Let $A$ be an arbitrary affine ring as above. We now define the basic
ideals of $A$ of interest to us. Let $f(x)\in \Fq[x]$.

\begin{defn}\label{genfact} 
We set $\tilde{f}:=\left(f(a)\right)_{a\in A}$; i.e., $\tilde{f}$ is the
ideal generated by the values of $f(x)$ on the elements of $A$.
\end{defn}

In general one would expect these ideals to be trivial (i.e., equal to
$A$ itself) as the example $f(x)=x+1$ shows. However, for the functions
given in Definition \ref{fact1}, they are highly nontrivial.

\begin{example}\label{firstone}
We show here that $\tilde{[i]}=\prod \mathfrak P$ where the product
ranges over all primes of degree (over $\Fq$) dividing $i$.
Let $\mathfrak P$ have
degree dividing $i$; then modulo $\mathfrak P$ we have $a^{q^i}=a$ (or $a^{q^i}-a=0$)
for any $a$. Thus $\mathfrak P$ must divide $\tilde{[i]}$. 
Now let $a\in A$ be a uniformizer at $\mathfrak P$. Then clearly so is
$[i](a)$. Therefore ${\mathfrak P}^2$ does not divide $\tilde{[i]}$.
Finally, a moment's thought along these
lines also shows that the only possible prime divisors of $\tilde{[i]}$ 
are those whose degree divides $i$. 
\end{example}

Let $\mathfrak P$ be a prime of $A$ with additive valuation $v_\mathfrak P$. Thakur
observed that for a function $f(x)$, $v_\mathfrak P (\tilde f)=v_\mathfrak P (\tilde
f_\mathfrak P)$ where $\tilde f_\mathfrak P$ is the analog of $\tilde f$ constructed
locally on the completion $A_\mathfrak P$ of $A$ at $\mathfrak P$. As a consequence,
we need only compute these valuations on $A=\Fq[T]$ where it is known that
the ideals associated to the functions of Definition \ref{fact1}
and Definition \ref{factorial} are generated by their values
at $x=T$. Thus, using Theorem 9.1.1 of \cite{go1},
we have the following basic factorization of $\tilde \Pi_j$.
\begin{prop}\label{sinnott}
Let $\mathfrak P$ be a prime of $A$ of degree $d$. Then 
$$v_\mathfrak P (\tilde \Pi_j)=\sum_{e\geq 1} [j/q^{ed}]\,,$$
where $[w]$ is the greatest integer function, $w\in \Q$.\end{prop}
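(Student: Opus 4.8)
The plan is to reduce the computation of $v_\mathfrak{P}(\tilde\Pi_j)$ to a count of monic polynomials, then invoke a Legendre-type digit formula. First I would use Thakur's local observation, quoted just before the statement, together with the remark that on $A=\Fq[T]$ the relevant ideals are generated by their values at $x=T$. This means it suffices to compute $v_\mathfrak{P}(\Pi_j(T))$ where now $\mathfrak{P}$ is a degree-$d$ prime of $\Fq[T]$; more precisely, since $v_\mathfrak{P}(\tilde\Pi_j)=v_\mathfrak{P}(\tilde\Pi_{j,\mathfrak P})$, we may as well work with the generator $\Pi_j(T)$. By Definition \ref{factorial}, $\Pi_j(T)=\prod_{e=0}^w D_e(T)^{c_e}$ with $j=\sum_{e=0}^w c_e q^e$, so $v_\mathfrak{P}(\tilde\Pi_j)=\sum_{e=0}^w c_e\, v_\mathfrak{P}(D_e(T))$, and the whole problem is to compute $v_\mathfrak{P}(D_e(T))$ for each $e$.

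Next I would compute $v_\mathfrak{P}(D_e(T))$ using Proposition \ref{easy}(3): $D_e(T)$ is (up to a unit, in fact it equals) the product of all monic polynomials in $\Fq[T]$ of degree exactly $e$. The prime $\mathfrak P=(P)$ has degree $d$, so a monic polynomial $g$ of degree $e$ is divisible by $P^k$ iff $g=P^k h$ with $h$ monic of degree $e-kd$; the number of such $g$ is $q^{e-kd}$ when $kd\le e$ and $0$ otherwise. Summing the contribution $\#\{g : v_\mathfrak P(g)\ge k\}$ over $k\ge 1$ gives $v_\mathfrak P(D_e(T))=\sum_{k\ge 1} q^{e-kd}=\sum_{k\ge 1}\lfloor q^{e-kd}\rfloor$ (the floor is harmless here since each term is either an integer power of $q$ or, when $kd>e$, zero — and $\lfloor q^{e-kd}\rfloor=0$ precisely then). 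So $v_\mathfrak P(D_e(T))=[q^e/q^d]+[q^e/q^{2d}]+\cdots$, matching Sinnott's formula for the single digit $j=q^e$.

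Finally I would assemble the pieces. We have
\begin{equation*}
v_\mathfrak P(\tilde\Pi_j)=\sum_{e=0}^w c_e\sum_{k\ge 1}\Big[\frac{q^e}{q^{kd}}\Big]=\sum_{k\ge 1}\sum_{e=0}^w c_e\Big[\frac{q^e}{q^{kd}}\Big].
\end{equation*}
The remaining step is the identity $\sum_{e=0}^w c_e\,[q^e/q^{kd}]=[j/q^{kd}]$ for each fixed $k$, i.e. the base-$q$ analog of the elementary fact that $\lfloor \sum c_e q^e / q^m\rfloor = \sum c_e q^{e-m}$ when one discards the terms with $e<m$ and there is no carrying issue. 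This holds because $j=\sum_e c_e q^e$ with $0\le c_e<q$ is exactly the base-$q$ expansion, so dividing by $q^{kd}$ and taking the integer part simply drops the digits $c_0,\dots,c_{kd-1}$, leaving $\sum_{e\ge kd} c_e q^{e-kd}=\sum_{e} c_e[q^e/q^{kd}]$. Substituting gives $v_\mathfrak P(\tilde\Pi_j)=\sum_{k\ge 1}[j/q^{kd}]=\sum_{e\ge 1}[j/q^{ed}]$, as claimed.

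I expect the only genuine subtlety to be bookkeeping around the floor functions and the passage from the global ideal $\tilde\Pi_j$ to the local generator — everything else is the standard Legendre/Kummer argument transported from $\Z$ to $\Fq[T]$, and the cited Theorem 9.1.1 of \cite{go1} presumably packages exactly the statement that the ideal is generated by the value at $x=T$. If one wanted to avoid citing that theorem one could argue directly that for any $a\in A$ the $\mathfrak P$-adic valuation of $\Pi_j(a)$ depends only on the residue and ramification data at $\mathfrak P$, so the minimum over $a$ (which defines $v_\mathfrak P(\tilde\Pi_j)$) is attained and equals the value computed for a uniformizer — but invoking the stated results keeps the exposition self-contained as the paper intends.
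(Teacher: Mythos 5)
Your proposal is correct and follows essentially the same route as the paper: reduce via Thakur's local observation to $A=\Fq[T]$, use that there the ideal $\tilde\Pi_j$ is generated by $\Pi_j(T)$, and then carry out a Legendre-type count. The paper stops at this point and delegates the remaining computation to Theorem 9.1.1 of \cite{go1}; you supply that computation explicitly (counting monics of degree $e$ divisible by $P^k$, then the digit identity $\sum_e c_e[q^e/q^{kd}]=[j/q^{kd}]$), and your bookkeeping checks out, so the proposal is a sound, self-contained version of the argument the paper outlines.
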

In the fundamental case $A=\Fq[T]$, Proposition \ref{sinnott}
was proved by W. Sinnott; it is clearly a direct analog of the calculation
of the $p$-adic valuation of $n!$.

Finally, we explain the relationship with Drinfeld modules that the reader
may skip as it is not needed for the remainder of the paper. 
As before, let $k$ be the quotient field of $A$. Let $L$ be a finite
extension of $k$ with $O_L$ the ring of $A$-integers in $L$. Let $\psi$ be a
Drinfeld module of arbitrary rank over $L$ with coefficients in
$O_L$. Let $e(z)=z+\sum_{i\geq 1} e_iz^{q^i}$ and $l(z)=z+\sum_{i\geq 1} l_iz^{q^i}$
be the exponential and logarithm of the Drinfeld module (obtained say
by embedding $L$ into ${\mathbb C}_\infty$).  Let $a\in A$. 

\begin{prop}\label{easy1}
The elements $D_i(a)e_i$ and $L_i(a)l_i$ lie in $O_L$.
\end{prop}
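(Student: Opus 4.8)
The plan is to prove both integrality statements simultaneously by induction on $i$, using the functional equations that the exponential $e(z)$ and logarithm $l(z)$ satisfy with respect to the Drinfeld module action. Recall that for $a \in A$ the defining property of $e(z)$ is $e(az) = \psi_a(e(z))$, where $\psi_a = \sum_{j=0}^{r\deg a} [a]_j \tau^j$ is the additive polynomial giving the action of $a$, with all coefficients $[a]_j \in O_L$ (since $\psi$ has coefficients in $O_L$ and $a \in A \subseteq O_L$) and leading behaviour governed by $[a]_0 = a$. Comparing the coefficient of $z^{q^i}$ on both sides of $e(az) = \psi_a(e(z))$ yields a recursion of the shape $a^{q^i} e_i = a e_i + (\text{polynomial in } e_0,\dots,e_{i-1} \text{ with coefficients in } O_L)$, hence $[i](a)\, e_i = (x^{q^i}-x)|_{x=a}\, e_i \in O_L + \sum_{j<i} O_L \cdot e_j$.

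First I would make this recursion precise: writing out $\psi_a(e(z)) = \sum_{j} [a]_j e(z)^{q^j}$ and extracting the degree-$q^i$ term, the only contribution involving $e_i$ itself comes from $[a]_0 e(z) = a e(z)$ (giving $a e_i$) and from the leading term of the sum, which after normalizing contributes $a^{q^i} e_i$; every other term is an $O_L$-linear combination of products $e_{j_1}^{q^{m_1}} \cdots$ with all indices strictly below $i$. Thus $[i](a) e_i$ is an $O_L$-linear combination of monomials in $e_0, \dots, e_{i-1}$. Now multiply through by $D_{i-1}(a)^q$ — or more carefully track which power of each $[m](a)$ is needed — and use Proposition \ref{easy}(3), which identifies $D_i(x)$ as the product of all monic polynomials of degree $i$, to see that $D_i(a) = [i](a) D_{i-1}(a)^q$. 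An induction hypothesis of the form "$D_m(a) e_m \in O_L$ for all $m < i$" then shows, after clearing denominators appropriately in the recursion, that $D_i(a) e_i \in O_L$. The logarithm case is entirely parallel, starting from $l(\psi_a(z)) = a\, l(z)$ and using $L_i(x) = [i](x) L_{i-1}(x)$ together with Proposition \ref{easy}(2).

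The main obstacle — really the only subtle point — is bookkeeping the exact powers of the $[m](a)$ that appear when one clears denominators in the degree-$q^i$ coefficient recursion, so that the product telescopes correctly into $D_i(a)$ (respectively $L_i(a)$) rather than into something larger. This is exactly the combinatorial reason the coefficients in Definition \ref{fact1} carry the exponents $q^{i-1}, q^{i-2}, \dots$ for $D_i$ versus all-ones for $L_i$: the exponential recursion feeds back $q$th powers of lower coefficients (because $e(z)^{q^j}$ appears), forcing the geometric weighting, while the logarithm recursion is "linear" in the lower $l_m$ and so needs only the lcm-type product. I would verify the base case $i=1$ directly ($[1](a) e_1 = $ an element of $O_L$, and $D_1 = L_1 = [1]$, so $D_1(a)e_1, L_1(a) l_1 \in O_L$), then organize the inductive step as a clean finite sum, citing \cite{go1} for the standard identities $D_i = [i] D_{i-1}^q$ and $L_i = [i] L_{i-1}$ if a reference is preferred over re-deriving them from Proposition \ref{easy}.
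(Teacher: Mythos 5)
Your proposal is correct and follows the same route as the paper: the paper's proof simply cites the recurrences $e(az)=\psi_a(e(z))$ and $al(z)=l(\psi_a(z))$ and says "the result follows by induction and the definition of $D_i(x)$ and $L_i(x)$"; your write-up supplies exactly the coefficient-of-$z^{q^i}$ extraction and denominator-clearing via $D_i=[i]D_{i-1}^q$ and $L_i=[i]L_{i-1}$ that this sketch is alluding to.
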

\begin{proof}
One has the basic recurrence relations
$$e(az)=\psi_a(e(z))$$
and 
$$al(z)=l(\psi_a(z))\,.$$
The result now follows by induction and the definition of $D_i(x)$ and
$L_i(x)$\,.
\end{proof}

\section{Zeta Functions and Integral  $\zeta$-values }\label{intzvalues}
\subsection{Exponentiation of Ideals}\label{expo}
As mentioned in the introduction, we shall define these functions and  values here with
a minimum of theory and refer the reader to Chapter 8 of \cite{go1}
for the elided details. 
Our goal is to define an analog of $n^j$ where $n$ is a positive integer
and $j$ is an arbitrary integer. However, as general $A$ is {\it not}
factorial, we have to define ``$\mathfrak I^j$'' as an
element of ${\mathbb C}_\infty^\ast$ for nonprincipal
$\mathfrak I$. Here we immediately run into a notational issue in that
the symbol ``$\mathfrak I^j$'' is universally
 reserved for taking the $j$-th power
of the {\it ideal} $\mathfrak I$ in the Dedekind domain $A$. We do {\it not}
change this; rather we will use ``$\mathfrak I^{(j)}$'' for the above
element of ${\mathbb C}_\infty^\ast$ so that there will be no confusion.

Recall that the completion of $k$ at $\infty$ is denoted $k_\infty$  with
$\F_\infty\subset k_\infty$ being 
 the associated finite field; recall also that $q_\infty=q^{d_\infty}$. Fix an element $\pi\in k_\infty^\ast$ 
of order $1$. Every element $x\in k_\infty^\ast$ has a unique 
decomposition:
\begin{equation}\label{decomp}
x=\zeta_x\pi^{v_\infty (x)}u_x\,,
\end{equation}
where $\zeta_x\in \F_\infty^\ast$, $v_\infty(x)\in \Z$, and $u_x\in k_\infty$ is a $1$-unit (i.e.,
congruent to 1 modulo $(\pi)$) 
and depends on $\pi$). We say
$x$ is ``positive'' or ``monic'' if and only if $\zeta_x=1$. Clearly
the positive elements form a subgroup of finite index of $k_\infty^\ast$.

\begin{defn}\label{1unit}
We set $\langle x \rangle=\langle x \rangle_\pi:=u_x$ where $u_x$ is defined in Equation
\ref{decomp}.
\end{defn}
As mentioned, the element $\langle x \rangle$ depends on $\pi$, but
no confusion will result by not making this dependence explicit. We will have more to
say about this later.

Note that $x\mapsto \langle x \rangle$ is a homomorphism from
$k_\infty^\ast$ to its subgroup $U_1(k_\infty)$ of $1$-units.

As above, $X$ is the smooth projective curve associated to $k$. 
For any fractional ideal $I$ of $A$, we let 
$\deg_k(I)$ be the degree over $\Fq$ of the divisor
associated to $I$ on the affine curve $X-\infty$. For $\alpha\in k^\ast$,
one sets $\deg_k (\alpha)=\deg_k((\alpha))$ where $(\alpha)$ is
the associated fractional ideal; this clearly agrees with the
degree of a polynomial in $\Fq[T]$.

\begin{defn}\label{inf5}
Set ${\mathbb S}_\infty:=\C_\infty^\ast \times \Zp$\,.
\end{defn}

The space ${\mathbb S}_\infty$ plays the role of the complex numbers in our theory
in that it is the domain of ``$n^s$.'' Indeed, let
$s=(x,y)\in {\mathbb S}_\infty$ and let $\alpha\in k$ be positive. The element
$v=\langle \alpha \rangle-1$ has absolute value $<1$; thus
$\langle \alpha\rangle^y=(1+v)^y$ is easily defined and computed
via the binomial theorem.
\begin{defn}\label{inf6}
We set
\begin{equation}\label{inf7}
\alpha^s:=x^{\deg_k (\alpha)}\langle \alpha \rangle^y\,.
\end{equation} \end{defn}
\noindent
Clearly ${\mathbb S}_\infty$ is a group whose operation is written additively.
Suppose that $j\in \Z$ and $\alpha^j$ is defined in the usual sense of the
canonical $\Z$-action on the multiplicative group. Let
$\pi_\ast\in \C_\infty^\ast$ be a fixed $d_\infty$-th root of $\pi$. 
Set $s_j:=(\pi_\ast^{-j},j)\in {\mathbb S}_\infty$. One checks easily that 
Definition \ref{inf6} gives $\displaystyle \alpha^{s_j}=\alpha^j$. 
When there is no chance of confusion, we denote $s_j$ simply by ``$j$.''

In the basic case $A=\Fq[T]$ one can now proceed to define zeta-values.
However, in general $A$ has nonprincipal and positively generated ideals.
Fortunately there is a canonical and simple procedure to extend
Definition \ref{inf6} to them as follows. 
Let $\mathcal I$ be the group of fractional ideals of the Dedekind
domain $A$ and let ${\mathcal P}\subseteq \mathcal I$ be the subgroup of
principal ideals. Let ${\mathcal P}^+\subseteq \mathcal P$ be the
subgroup of principal ideals which have positive generators. It is a standard
fact that ${\mathcal I}/{\mathcal P}^+$ is a finite abelian group. The 
association 
\begin{equation}\label{inf8}
{\mathfrak h}\in {\mathcal P}^+\mapsto \langle {\mathfrak h}\rangle:= 
\langle \lambda \rangle_\pi\,,\end{equation}
 where $\lambda$ is the unique positive generator of $\mathfrak h$,
is obviously a homomorphism from ${\mathcal P}^+$ to $U_1(k_\infty)\subset \C_\infty^\ast$.

Let $U_1(\C_\infty)\subset \C_\infty^\ast$ be the group of $1$-units defined
in the obvious fashion. The binomial theorem, again, shows that
$U_1(\C_\infty)$ is a $\Zp$-module. However, it is also closed under the
unique operation of taking $p$-th roots; as such, $U_1(\C_\infty)$ is
a $\Qp$-vector space.

\begin{lemma}\label{inf9}
The mapping ${\mathcal P}^+\to U_1(\C_\infty)$ given by
$\mathfrak h\mapsto \langle {\mathfrak h}\rangle_\pi$ has a unique
extension to $\mathfrak I$ (which we also denote by $\langle ?\rangle_\pi$).
\end{lemma}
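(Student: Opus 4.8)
The plan is to extend the homomorphism $\langle\,?\,\rangle_\pi\colon {\mathcal P}^+\to U_1(\C_\infty)$ by exploiting the fact that the target is a $\Qp$-vector space, hence in particular a divisible, torsion-free abelian group, so it is an injective $\Z$-module. First I would recall that ${\mathcal I}$ is a free abelian group on the (finitely or countably many) prime ideals of $A$, and that ${\mathcal P}^+\subseteq{\mathcal I}$ is a subgroup with finite index, since ${\mathcal I}/{\mathcal P}^+$ is the finite group mentioned just before the statement. Because $U_1(\C_\infty)$ is a $\Qp$-vector space — closed under taking $p$-power roots and a $\Zp$-module by the binomial theorem, as established in the paragraph preceding the lemma — it is in particular a $\Q$-vector space, hence divisible and torsion-free, hence injective as a $\Z$-module. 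The injectivity (equivalently, the universal lifting property, i.e.\ Baer's criterion) immediately gives the existence of an extension $\langle\,?\,\rangle_\pi\colon{\mathcal I}\to U_1(\C_\infty)$ of the given homomorphism.

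For uniqueness, the key point is that ${\mathcal I}/{\mathcal P}^+$ is finite while $U_1(\C_\infty)$ is torsion-free. Concretely: if $\phi_1,\phi_2\colon{\mathcal I}\to U_1(\C_\infty)$ both restrict to $\langle\,?\,\rangle_\pi$ on ${\mathcal P}^+$, then $\phi_1/\phi_2$ is a homomorphism ${\mathcal I}\to U_1(\C_\infty)$ that is trivial on ${\mathcal P}^+$, hence factors through the finite group ${\mathcal I}/{\mathcal P}^+$; but a homomorphism from a finite group to a torsion-free group is trivial, so $\phi_1=\phi_2$. Equivalently and more explicitly: given any fractional ideal $\mathfrak I$, choose $m$ with $m\cdot[{\mathcal I}:{\mathcal P}^+]$ killing the class of $\mathfrak I$ — in fact $m=[{\mathcal I}:{\mathcal P}^+]$ works — so that $\mathfrak I^m\in{\mathcal P}^+$; then one is forced to set $\langle\mathfrak I\rangle_\pi$ equal to the unique $m$-th root of $\langle\mathfrak I^m\rangle_\pi$ in the $\Qp$-vector space $U_1(\C_\infty)$, and one checks this is well-defined (independent of $m$) and multiplicative. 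This makes the extension completely canonical.

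The one point requiring a small argument — and the step I would flag as the only real content — is verifying that the formula $\langle\mathfrak I\rangle_\pi := \bigl(\langle\mathfrak I^m\rangle_\pi\bigr)^{1/m}$, $m=[{\mathcal I}:{\mathcal P}^+]$, is a group homomorphism on all of ${\mathcal I}$: given $\mathfrak I,\mathfrak J$, one has $(\mathfrak I\mathfrak J)^m\in{\mathcal P}^+$ with $\langle(\mathfrak I\mathfrak J)^m\rangle_\pi=\langle\mathfrak I^m\rangle_\pi\langle\mathfrak J^m\rangle_\pi$ since $\langle\,?\,\rangle_\pi$ is already multiplicative on ${\mathcal P}^+$, and then uniqueness of $m$-th roots in the torsion-free divisible group $U_1(\C_\infty)$ forces $\langle\mathfrak I\mathfrak J\rangle_\pi=\langle\mathfrak I\rangle_\pi\langle\mathfrak J\rangle_\pi$. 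Everything else is formal. I do not expect any serious obstacle; the lemma is essentially the observation that a homomorphism from a finitely-generated abelian group to a $\Q$-vector space is determined by, and freely extends from, any finite-index subgroup.
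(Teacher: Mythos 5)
Your proof is correct and takes the same route as the paper: divisibility of $U_1(\C_\infty)$ (being a $\Qp$-vector space) gives existence via injectivity of divisible $\Z$-modules, and torsion-freeness together with the finitude of ${\mathcal I}/{\mathcal P}^+$ gives uniqueness — your writeup actually makes explicit the torsion-freeness step, which the paper's one-line proof leaves implicit, and your explicit $m$-th-root formula is a nice concrete rendering of the same extension. One small slip in your closing summary: ${\mathcal I}$ is \emph{not} finitely generated (it is free on the infinitely many primes of $A$), but this plays no role in the argument you actually give, which only needs that ${\mathcal I}/{\mathcal P}^+$ is finite.
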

\begin{proof}
As $U_1(\C_\infty)$ is a $\Qp$-vector space, it is a divisible group; thus the
extension follows by general theory. The uniqueness then follows
by the finitude of ${\mathcal I}/{\mathcal P}^+$. \end{proof}

The next lemma explaining the dependence on $\pi$ will play a fundamental
role for us. 

\begin{lemma}\label{inf9pt5}
Let $\pi_1$ and $\pi_2$ be two positive parameters and let $I$ be a nonzero
ideal of $A$. Then
\begin{equation}\label{inf9pt6}
\langle I \rangle_{\pi_1}=(\pi_1/\pi_2)^{\deg_k(I)/d_\infty}\langle I \rangle_{\pi_2}\,.
\end{equation}\end{lemma}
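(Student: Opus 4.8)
\textbf{Proof proposal for Lemma \ref{inf9pt5}.}

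The plan is to reduce the general statement to the case of principal ideals with positive generators, where it becomes a direct computation with the decomposition in Equation \ref{decomp}, and then propagate it to all of $\mathcal I$ using the uniqueness half of Lemma \ref{inf9}. First I would treat a principal ideal $I=(\lambda)$ with $\lambda\in k^\ast$ positive (so $\zeta_\lambda=1$ and $\deg_k(I)=\deg_k(\lambda)=-d_\infty v_\infty(\lambda)$, since $\infty$ has degree $d_\infty$). Writing $\lambda$ with respect to each parameter, $\lambda=\pi_1^{v_\infty(\lambda)}\langle\lambda\rangle_{\pi_1}=\pi_2^{v_\infty(\lambda)}\langle\lambda\rangle_{\pi_2}$, I would divide the two expressions to get $\langle\lambda\rangle_{\pi_1}=(\pi_2/\pi_1)^{v_\infty(\lambda)}\langle\lambda\rangle_{\pi_2}=(\pi_1/\pi_2)^{-v_\infty(\lambda)}\langle\lambda\rangle_{\pi_2}$. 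Since $-v_\infty(\lambda)=\deg_k(I)/d_\infty$, this is exactly Equation \ref{inf9pt6}; one should just check that $(\pi_1/\pi_2)^{v_\infty(\lambda)}$ lands in $U_1(k_\infty)$ and so does not disturb the $1$-unit normalization, which follows because $\pi_1/\pi_2$ is itself a $1$-unit (both parameters are positive of order $1$, so their ratio has $v_\infty=0$ and leading coefficient $1$).

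Next I would extend from $\mathcal P^+$ to all of $\mathcal I$. Both sides of Equation \ref{inf9pt6}, viewed as functions of $I$, are homomorphisms $\mathcal I\to U_1(\C_\infty)$: the left side is $\langle\,?\,\rangle_{\pi_1}$ by Lemma \ref{inf9}, the right side is the product of the homomorphism $\langle\,?\,\rangle_{\pi_2}$ with the homomorphism $I\mapsto(\pi_1/\pi_2)^{\deg_k(I)/d_\infty}$ (this is well-defined into $U_1(\C_\infty)$ precisely because $\pi_1/\pi_2\in U_1(k_\infty)$ and $U_1(\C_\infty)$ is a $\Qp$-vector space, so arbitrary $\deg_k(I)/d_\infty\in\Q$ powers make sense). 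The previous paragraph shows these two homomorphisms agree on the subgroup $\mathcal P^+$. By the uniqueness clause of Lemma \ref{inf9} — which rests on the finitude of $\mathcal I/\mathcal P^+$ and the divisibility of $U_1(\C_\infty)$ — any two homomorphisms $\mathcal I\to U_1(\C_\infty)$ that coincide on $\mathcal P^+$ coincide everywhere, so the identity holds for all nonzero ideals $I$.

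The only genuinely delicate point is making sure the exponent $\deg_k(I)/d_\infty$ is interpreted correctly and consistently: for non-principal $I$ it need not be an integer, so $(\pi_1/\pi_2)^{\deg_k(I)/d_\infty}$ must be read via the $\Qp$-vector-space structure on $U_1(\C_\infty)$, and I would want to note that $d_\infty\mid \deg_k(\lambda)$ for $\lambda\in k^\ast$ (so the formula in the principal case really does have integer exponent and matches the naive computation), while the general case is forced by homomorphy rather than verified directly. Everything else is bookkeeping with Equation \ref{decomp}.
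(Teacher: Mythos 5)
Your proof is correct and follows exactly the same route as the paper's: verify the identity directly for principal ideals with positive generators via the decomposition in Equation \ref{decomp}, then invoke the uniqueness clause of Lemma \ref{inf9} to propagate the equality of the two homomorphisms from $\mathcal P^+$ to all of $\mathcal I$. You have simply filled in the computation that the paper labels ``obvious,'' and your remarks on the $\Qp$-power interpretation and on $\pi_1/\pi_2\in U_1(k_\infty)$ are the right ones to make.
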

\begin{proof} The formula obviously works when I is principal and positively
generated and both sides of Equation \ref{inf9pt6} are homomorphisms as functions
of $I$. Therefore,
the uniqueness of the extension from $\mathcal P^+$ to all ideals finishes
the result.\end{proof}

If $s\in {\mathbb S}_\infty$ and $I$ are chosen as above, we now set
\begin{equation}\label{inf10}
I^s:=x^{\deg_k (I)}\langle I \rangle^y\,.
\end{equation}
Thus if $\alpha\in k$ is positive one sees that $(\alpha)^s$ agrees with
$\alpha^s$ as in Equation \ref{inf7}.

For a fractional ideal $\mathfrak I$ and integer $j$, as promised, we now put
$\mathfrak I^{(j)}:=\mathfrak I^{s_j}$. Thus if $a\in k$ is positive then
$(a)^{(j)}=a^j$ by definition. The reader can check that $\mathfrak I^{(j)}$ is 
independent of $\pi$ and $\pi_\ast$ up to possible multiplication of $d_\infty$-th
root of unity.

The values ${\mathfrak I}^{(j)}$ are obviously determined multiplicatively
by $\mathfrak I^{(1)}$. Furthermore, suppose $\mathfrak I^t=(i)$ where $i$
is positive where $t$ is a positive integer (which always exists
as the ideal class group is finite) and put ${\mathfrak i}=\mathfrak I^{(1)}$. 
Then we have the basic formula
\begin{equation}\label{defI1}
\mathfrak i^t=i\,.
\end{equation}
From this it is very easy to see that the values ${\mathfrak I}^{(1)}$ generate
a finite extension $V$ of $k$ in $\C_\infty$ which is called the
{\it value field}. It is also easy to see that
$\mathfrak I$ becomes principal in this field and is generated by 
$\mathfrak I^{(1)}$.

\subsection{The $\zeta$-values}\label{zvalues}
Let $j$ be an arbitrary integer. 
\begin{defn}\label{thevalues}
We formally put  
$$\zeta(j):=\sum_{\mathfrak I}{\mathfrak I}^{(-j)}=\sum_{\mathfrak I} 
{\mathfrak I}^{s_{-j}}\,;$$ 
where $\mathfrak I$ ranges over the nonzero ideals of $A$ and $s_{-j}\in
{\mathbb S}_\infty$ was defined after Definition \ref{inf6} .\end{defn}

Because the analysis is nonArchimedean, the sum $\zeta(j)$ clearly
converges to an element of $\C_\infty$ for $j>0$. At the nonpositive
integers we must regroup the sum. More precisely, for $j\geq 0$ we write
\begin{equation}\label{regroup}
\zeta(-j)=\sum_{e=0}^\infty \left(\sum_{\deg_k (\mathfrak I)=e}
{\mathfrak I}^{(j)}\right)\,,\end{equation}
where, as above, $\deg_k (\mathfrak I)$ is the degree over $\Fq$ of
$\mathfrak I$. As is now well-known (see, e.g., Chapter 8 of \cite{go1})
for sufficiently
large $e$ the sum in parentheses vanishes. Thus the value is an algebraic
integer over $A$. 

It is known that $\zeta(-j)=0$ for $j$ a positive integer divisible by
$q_\infty-1$. These are the {\it trivial zeroes}.

\begin{defn}\label{special} 
For $j\geq 0$, we set
\begin{equation}\label{regroup1}
z(x,-j)=z_A(x,-j):=\sum_{e=0}^\infty x^{-e}\left(\sum_{\deg_k (\mathfrak I)=e}
{\mathfrak I}^{(j)}\right)\,.\end{equation}
As the sum in parentheses in Equation \ref{regroup} vanishes for sufficiently
large $e$, we see that $z(x,-j)$
is a {\em polynomial} in $x^{-1}$. These polynomials themselves also occur
as special zeta values, and, as such, are called the {\it special polynomials}. 
\end{defn}

We note that the values given above are all special values of the function given
in the next definition.

\begin{defn}\label{zetaA}
We define
\begin{equation}\label{zetaA1}
\zeta(s)=\zeta_A (s)=\zeta_{A,\pi}(s):=\sum_{\mathfrak I} {\mathfrak I}^{-s}  \end{equation}
for all $s\in {\mathbb S}_\infty$ and nonzero ideals $\mathfrak I$ of $A$.
\end{defn}
 \noindent In Lemma \ref{depend1} below  we 
will make explicit the dependence of 
$\zeta_{A,\pi}(s)$ on $\pi$.

 Let $s=(x,y)$. One then rewrites $\zeta_{A,\pi}(s)$ as 
\begin{equation}\label{rewritezeta}
\zeta_{A,\pi}(s)=\sum_{e=0}^\infty x^{-e}\left(\sum_{\deg_k (\mathfrak I)=e}
\langle{\mathfrak I}\rangle^{-y}\right)\,.\end{equation}
The analytic continuation of $\zeta_{A,\pi}(s)$ (and all such arithmetic
Dirichlet series) is accomplished by showing that these power series
are actually entire in $x^{-1}$, with very strong continuity properties
on all of ${\mathbb S}_\infty$. 

\begin{lemma}\label{depend1}
Let $\pi_1$ and $\pi_2$ be two positive parameters, $\alpha\in \mathbb C_\infty^\ast$ 
and $y_0\in \mathbb Z_p$. Then we have
\begin{equation}\label{depend2}
\zeta_{A,\pi_2}(\alpha,y_0)=\zeta_{A,\pi_1}((\pi_1/\pi_2)^{-y_0/d_\infty}\alpha,y_0)\,.\end{equation}
\end{lemma}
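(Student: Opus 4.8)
The plan is to reduce Lemma~\ref{depend1} to the already-established dependence formula for $\langle I\rangle_\pi$ in Lemma~\ref{inf9pt5}, by writing the zeta function as a sum over ideals grouped by degree and tracking how each term transforms under the change $\pi_1\rightsquigarrow\pi_2$. First I would fix $s=(\alpha,y_0)$ and use the expansion in Equation~\ref{rewritezeta}, so that
\[
\zeta_{A,\pi_2}(\alpha,y_0)=\sum_{e=0}^\infty \alpha^{-e}\left(\sum_{\deg_k(\mathfrak I)=e}\langle \mathfrak I\rangle_{\pi_2}^{-y_0}\right)\,.
\]
Now by Lemma~\ref{inf9pt5}, for any ideal $\mathfrak I$ with $\deg_k(\mathfrak I)=e$ we have $\langle \mathfrak I\rangle_{\pi_2}=(\pi_2/\pi_1)^{e/d_\infty}\langle \mathfrak I\rangle_{\pi_1}$, hence
\[
\langle \mathfrak I\rangle_{\pi_2}^{-y_0}=(\pi_2/\pi_1)^{-ey_0/d_\infty}\langle \mathfrak I\rangle_{\pi_1}^{-y_0}=(\pi_1/\pi_2)^{ey_0/d_\infty}\langle \mathfrak I\rangle_{\pi_1}^{-y_0}\,.
\]
The crucial point is that the scaling factor $(\pi_1/\pi_2)^{ey_0/d_\infty}$ depends on $\mathfrak I$ only through its degree $e$, so it pulls out of the inner sum.

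Next I would substitute this back and collect the factor of $(\pi_1/\pi_2)^{ey_0/d_\infty}$ together with $\alpha^{-e}$:
\[
\zeta_{A,\pi_2}(\alpha,y_0)=\sum_{e=0}^\infty \left((\pi_1/\pi_2)^{-y_0/d_\infty}\alpha\right)^{-e}\left(\sum_{\deg_k(\mathfrak I)=e}\langle \mathfrak I\rangle_{\pi_1}^{-y_0}\right)\,.
\]
Comparing with Equation~\ref{rewritezeta} applied with parameter $\pi_1$ at the point $\bigl((\pi_1/\pi_2)^{-y_0/d_\infty}\alpha,\,y_0\bigr)$, the right-hand side is exactly $\zeta_{A,\pi_1}\bigl((\pi_1/\pi_2)^{-y_0/d_\infty}\alpha,\,y_0\bigr)$, which is Equation~\ref{depend2}.

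A few points deserve care rather than real difficulty. One must check that the power $(\pi_1/\pi_2)^{-y_0/d_\infty}$ makes sense: $\pi_1/\pi_2$ is a $1$-unit in $k_\infty$ (both parameters are positive of order $1$), so by the discussion before Lemma~\ref{inf9} it lies in the $\Qp$-vector space $U_1(\C_\infty)$, and $-y_0/d_\infty\in\Qp$ (or at least the relevant roots exist, with a possible ambiguity by a $d_\infty$-th root of unity, exactly as flagged in the text after Equation~\ref{defI1}); this is enough to make the substituted first coordinate a well-defined element of $\C_\infty^\ast$. One should also note that the rearrangement of the double sum is harmless because, in the nonarchimedean setting, the series is a power series in $\alpha^{-1}$ whose inner coefficient sums are finite (indeed eventually $0$ at negative integers, and convergent in general), so multiplying the $e$-th coefficient by the constant $(\pi_1/\pi_2)^{ey_0/d_\infty}$ is just a termwise operation with no convergence issue. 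The only genuinely substantive input is Lemma~\ref{inf9pt5}, and the ``main obstacle,'' such as it is, is simply being careful that the degree-dependent phase factor is constant on each homogeneous piece so that it can be absorbed into the $x$-variable rather than the $y$-variable; once that is observed the identity is immediate.
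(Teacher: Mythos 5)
Your proof is correct and is exactly the paper's approach: the paper's proof is the one-liner ``follows from Lemma~\ref{inf9pt5} upon unraveling the definitions,'' and you have simply carried out that unraveling via Equation~\ref{rewritezeta}, observing that the phase factor $(\pi_1/\pi_2)^{ey_0/d_\infty}$ is constant on each degree-$e$ piece and so can be absorbed into the $x$-coordinate. The extra remarks about well-definedness of the $\Qp$-power of a $1$-unit and the termwise nature of the rearrangement are sound and harmless elaborations of what the paper leaves implicit.
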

\begin{proof} This follows from Lemma \ref{inf9pt5} upon unraveling
the definitions.\end{proof}

Let $t=(x_0,y_0)\in {\mathbb S}_\infty$. By the  ``order of zero of $\zeta_{A,\pi}(s)$
at $t$,'' one means the order of zero of the entire power series
$\zeta_{A,\pi}(x,y_0)$ at $x=x_0$. 

\section{The Group $S_{(q)}$}\label{sp}
In this section we will introduce the automorphism groups of interest to
us. These will be subgroups of the group of homeomorphisms of $\Zp$ and 
and they will stabilize -- and so permute -- both the nonpositive and 
nonnegative integers sitting in $\Zp$. 

Let $q$ continue to be a power of $p$ and let $x\in \Zp$. Write $x$ $q$-adically 
as
\begin{equation}\label{qadic}
x=\sum_{i=0}^\infty c_i q^i
\end{equation}
where $0\leq c_i<q$ for all $i$. If $x$ is a nonnegative integer 
(so that the sum in Equation
\ref{qadic} is obviously finite), then we set
\begin{equation}\label{digitsum}
\ell_q(x)=\sum_i c_i\,.
\end{equation}

Let $\rho$ be a permutation of the set $\{0,1,2,\ldots\}$.

\begin{defn}\label{pi(n)}
We define $\rho_\ast (x)$, $x\in \Zp$, by
\begin{equation}\label{pi(n)2} 
\rho_\ast(x):=\sum_{i=0}^\infty c_i q^{\rho(i)}\,.
\end{equation}\end{defn} 
Clearly $x\mapsto \rho_\ast(x)$ gives a representation of $\rho$ as a set permutation
(in fact, as we will see in Proposition \ref{basicS}, a homeomorphism) of $\Zp$.

\begin{defn}\label{biggroup}
We let $S_{(q)}$ be the group of permutations of $\Zp$ obtained as $\rho$
varies over all permutations of $\{0,1,2,\dots\}$. \end{defn}

\begin{rem}\label{symm}
We use the notation ``$S_{(q)}$'' to avoid confusion with the symmetric
group $S_q$ on $q$-elements.\end{rem}

Note that if $q_0$ and $q_1$ are powers of $p$, and $q_0\mid q_1$, then
$S_{(q_1)}$ is naturally realized as a subgroup of $S_{(q_0)}$. 

The next proposition gives the basic properties of the mapping
$\rho_\ast(x)$.

\begin{prop}\label{basicS}
Let $\rho_\ast(x)$ be defined as above.\\
{\rm 1}. The mapping $x\mapsto \rho_\ast(x)$ is continuous on $\Zp$.\\
{\rm 2.} (``Semi-additivity'') Let $x,y,z$ be three $p$-adic integers
with $z=x+y$ and where there is no carry over of $q$-adic digits. Then
$\rho_\ast(z)=\rho_\ast(x)+\rho_\ast(y)$.\\
{\rm 3.} The mapping $x\mapsto \rho_\ast(x)$ stabilizes the nonnegative integers.\\
{\rm 4.} The mapping $x\mapsto \rho_\ast(x)$ stabilizes the negative integers.\\
{\rm 5.} Let $n$ be a nonnegative integer. Then $\ell_q(n)=\ell_q(\rho_\ast(n))$.\\
{\rm 6}. Let $n$ be an integer. Then $n\equiv \rho_\ast(n) \pmod{q-1}$.\\
\end{prop}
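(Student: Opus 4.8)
The plan is to treat the six assertions essentially independently, since each is a direct consequence of the definition of $\rho_\ast$ in terms of $q$-adic digits. For (1), continuity, I would observe that $\rho_\ast$ does not change the digit at position $i$; it merely moves it to position $\rho(i)$. So if $x\equiv x'\pmod{q^N}$, then $x$ and $x'$ share the digits $c_0,\dots,c_{N-1}$, and hence $\rho_\ast(x)$ and $\rho_\ast(x')$ agree in all positions $\rho(0),\dots,\rho(N-1)$. Since $\rho$ is a bijection of $\{0,1,2,\dots\}$, the set $\{\rho(0),\dots,\rho(N-1)\}$ contains all integers up to some $M=M(N)\to\infty$ as $N\to\infty$ (indeed $M\ge \min\{m:\{0,\dots,m\}\subseteq\rho(\{0,\dots,N-1\})\}$, which tends to infinity), so $\rho_\ast(x)\equiv\rho_\ast(x')\pmod{q^{M+1}}$. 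This gives continuity. Part (2) is immediate: if $z=x+y$ with no carrying of $q$-adic digits, then the $i$-th digit of $z$ is the sum of the $i$-th digits of $x$ and $y$, and since $\rho_\ast$ acts digitwise by relabeling positions and the sums on the right again involve no carrying (same digit sums, moved to the same new positions), additivity passes through.

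For (3), if $n\ge 0$ then only finitely many $c_i$ are nonzero, so $\rho_\ast(n)=\sum c_i q^{\rho(i)}$ is again a finite sum, i.e.\ a nonnegative integer; moreover $\rho^{-1}$ gives the inverse map, so $\rho_\ast$ restricts to a bijection of the nonnegative integers. Part (5) is then essentially free: $\ell_q$ is the sum of the $q$-adic digits, and $\rho_\ast(n)$ has exactly the same multiset of nonzero digits $\{c_i\}$ as $n$, just attached to permuted positions — so $\ell_q(\rho_\ast(n))=\sum_i c_i=\ell_q(n)$. (One should note here that $\{c_i\}$ really is the $q$-adic digit expansion of $\rho_\ast(n)$, which holds because distinct indices $i$ map to distinct positions $\rho(i)$, so no two digits collide.)

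For (4), the standard trick is that a negative integer $-m$ (with $m>0$) has an \emph{eventually constant} $q$-adic expansion: write $-m = \sum_{i=0}^\infty c_i q^i$ where $c_i = q-1$ for all $i$ sufficiently large. Concretely $-1=\sum_{i\ge 0}(q-1)q^i$ in $\Zp$, and a general $-m$ has $c_i=q-1$ for $i$ large. Applying $\rho_\ast$ permutes the finitely many ``exceptional'' digits (those not equal to $q-1$) among themselves relative to the infinitely many $(q-1)$'s; since $\rho$ is a bijection of $\{0,1,2,\dots\}$, the image expansion still has cofinitely many digits equal to $q-1$ and only finitely many exceptions. Any element of $\Zp$ whose $q$-adic expansion is eventually equal to $q-1$ is a negative integer — this is the geometric series identity $\sum_{i\ge N}(q-1)q^i = -q^N$ in $\Zp$, so such an element equals (a finite nonnegative-integer combination of powers of $q$) minus a power of $q$, which is an integer, and it is negative since it is not a nonnegative integer (it is $\equiv -1$ direction, i.e.\ its expansion is not eventually $0$). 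Hence $\rho_\ast(-m)$ is a negative integer, and as before $\rho^{-1}$ gives the inverse, so $\rho_\ast$ stabilizes the negative integers. Finally (6): reduction mod $q-1$ kills $q\equiv 1$, so for any $x\in\Zp$, $x\equiv\sum_i c_i\pmod{q-1}$ whenever this makes sense; for $x=n$ a nonnegative integer this is literally the classical ``casting out nines'' congruence $n\equiv\ell_q(n)\pmod{q-1}$, and since $\ell_q(n)=\ell_q(\rho_\ast(n))$ by (5) and $\rho_\ast(n)$ is again a nonnegative integer by (3), we get $n\equiv\rho_\ast(n)\pmod{q-1}$. For $n$ a negative integer, one reduces to the nonnegative case by adding a large multiple of $q-1$ of the form $q^N-1$ (which shifts the expansion without changing the class mod $q-1$), or alternatively one notes $\rho_\ast$ is compatible with this shift by semi-additivity (2). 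The only mildly delicate point — the step I expect to require the most care in writing up — is (4), specifically making precise that $\rho_\ast$ genuinely preserves the property ``$q$-adic expansion is eventually equal to $q-1$'' and that this property exactly characterizes the negative integers inside $\Zp$; everything else is bookkeeping with digits.
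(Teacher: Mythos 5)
Your proposal follows essentially the same route as the paper for Parts~1--5: continuity via the bijectivity of $\rho$ (the paper phrases it through the inverse permutation $\phi$ and chooses $t$ so that $\{\phi(0),\dots,\phi(j-1)\}\subset\{0,\dots,t-1\}$, you phrase it dually by asking how far the image $\{\rho(0),\dots,\rho(N-1)\}$ reaches, but the content is identical), semi-additivity and stability of $\Z_{\ge 0}$ by inspection, stability of the negatives via ``cofinitely many digits equal to $q-1$,'' and Part~5 by invariance of the digit multiset.

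The one place you underestimate the work is Part~6 for negative $n$, and your sketch there does not quite go through as written. You propose to ``reduce to the nonnegative case by adding a large multiple of $q-1$ of the form $q^N-1$,'' invoking semi-additivity to carry $\rho_\ast$ across the shift. But the decomposition $n = (n+q^N-1) + (1-q^N)$ is \emph{not} carry-free in general: $1-q^N$ has digit $1$ in position $0$, while $n+q^N-1$ can have digit $q-1$ there, producing a carry; and absent a carry-free decomposition, Part~2 says nothing. (The other phrasing -- show $n\equiv n+q^N-1$ and $n+q^N-1\equiv\rho_\ast(n+q^N-1)\pmod{q-1}$ -- is circular, because one still needs $\rho_\ast(n+q^N-1)\equiv\rho_\ast(n)$, which is the unknown.) The paper instead uses the decomposition $n = (q^j+n) + (-q^j)$ with $q^j+n\ge 0$; this \emph{is} carry-free, since $q^j+n<q^j$ occupies positions $<j$ while $-q^j$ has digits $0$ in positions $<j$ and $q-1$ in positions $\ge j$. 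Semi-additivity then gives $\rho_\ast(n)=\rho_\ast(q^j+n)+\rho_\ast(-q^j)$, and one finishes by observing $\rho_\ast(-q^j)=m-q^t$ with $m$ a nonnegative integer divisible by $q-1$, so $\rho_\ast(-q^j)\equiv -1\equiv -q^j\pmod{q-1}$, whence $\rho_\ast(n)\equiv (q^j+n)-q^j=n$. Replacing your $q^N-1$ by $q^N$ (equivalently $q^j$) repairs the argument and recovers the paper's proof.
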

\begin{proof}
To see Part 1, let $j$ be a positive integer. We want to show that
the first $q^j$ expansion coefficients of $\rho_\ast(x)$ and $\rho_\ast(y)$ are the
same if $x\equiv y \pmod{q^t}$ for some positive integer $t$. Let $\phi$
be the inverse permutation to $\rho$ (as functions on the nonnegative 
integers). Choose $t$ greater than $\phi(e)$ for
$e=0,....,j-1$. Parts 2 and 3 are obvious.
To see Part 4, let $n$ be a negative integer
and let $j$ be a positive integer chosen so that $q^j+n$ is nonnegative.
Then $q$-adically we have
\begin{equation}\label{qn}
n=(q^j+n)-q^j=(q^j+n)+(q-1)q^j+(q-1)q^{j+1}+\cdots\,,
\end{equation}
as $-1=q-1+(q-1)q+(q-1)q^2\cdots$. On the other hand, $\rho_\ast(n)$ will now
clearly also have almost all of its $q$-adic coefficients equal to $q-1$ and the result
is clear.
Part 5 is clear and implies Part 6 for nonnegative $n$ as 
then we have $n\equiv \ell_q(n)\pmod{q-1}$.  Thus suppose $n$ is negative.
As in Equation \ref{qn} write $n=(q^j+n)-q^j$ with $q^j+n$ nonnegative. 
By Part 2, we have
\begin{equation}\label{qn1}
\rho_\ast(n)=\rho_\ast(q^j+n)+\rho_\ast(-q^j)\,.
\end{equation}
Clearly $\rho_\ast(-q^j)$ has almost all coefficients equal to $q-1$ with the rest
equaling $0$; thus we can write $\rho_\ast(-q^j)=m-q^t$ for some $t$ where
$m$ is positive and divisible by $q-1$. Part 6 for nonnegative integers
now implies that modulo $q-1$ we have
\begin{eqnarray*}\label{longcalc}
\rho_\ast(n)&=& \rho_\ast(q^j+n)+m-q^t\\
&\equiv& (q^j+n) -q^t\\
&\equiv&1+n-1\\
&\equiv& n\,.\end{eqnarray*}
\end{proof}
Thus, by Parts 3 and 4 of Proposition \ref{basicS}, $\rho_\ast$ permutes
both the nonpositive and nonnegative integers.

Notice further that the injection $x\mapsto p^ex$ ($e$ a positive integer)
is not in $S_{(p)}$ as it is not
surjective. However, let $n$ be a positive integer. Then clearly
$p^en=\rho_\ast(n)$ for infinitely many $\rho_\ast\in S_{(p)}$ (which may vary with $n$).
Note, however, that multiplication by $p$ will change the set
of $q$-adic digits of an integer if $q>p$, etc. Thus in this
case $pn$ will not equal $\rho_\ast(n)$ for any $\rho_\ast\in S_{(q)}$.

The reader can readily see that the cardinality of $S_{(p)}$ is 
$\mathfrak c$ (where $\mathfrak c$ is the cardinality of the continuum) as
this is the cardinality of the group of permutations of $\{0,1,2,\cdots\}$.

\begin{defn}\label{rhohat}
Let $\rho$ be as above and let $x\in \Zp$. We define
\begin{equation}\label{rhohat2}
\hat{\rho}_\ast(x):=-\rho_\ast(-x)\,.
\end{equation}
\end{defn}\noindent
It is clear that $\hat{\rho}_\ast$ also stabilizes the nonnegative and nonpositive
integers. Moreover, one can easily find examples of $\rho$ and $x$ such that
$\hat{\rho}_\ast(x)$ is {\em not} given by digit permutations.

\begin{rem}\label{decimal}
Let $x\in (0,1)$ be a real number with decimal expansion
\begin{equation}\label{decimal2}
x=.x_0x_1\ldots \,.
\end{equation}
In order to have this expansion be unique, we require that infinitely many of
the $x_i$ do not equal $9$. We can then play the same permutation game with
these decimal coefficients and we thereby obtain homeomorphisms of $(0,1)$.
Similar remarks work for an arbitrary base $b$.
\end{rem}

It is quite remarkable that the groups $S_{(q)}$ have a very natural 
relationship with binomial coefficients considered modulo $p$. This is
given in our next two results.

\begin{prop}\label{binom1}
Let $\sigma\in S_{(p)}$, $y\in \Zp$, and $k$ a nonnegative integer. Then
we have
\begin{equation}\label{binom2}
{y \choose k}\equiv  {\sigma y\choose \sigma k}\pmod{p}\,.\
\end{equation}
\end{prop}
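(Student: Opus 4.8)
The plan is to reduce the statement to Lucas's theorem on binomial coefficients modulo $p$. Recall that Lucas's theorem states: if $y = \sum_i b_i p^i$ is the $p$-adic expansion of $y \in \Zp$ (with $0 \le b_i < p$) and $k = \sum_i a_i p^i$ is the (finite) $p$-adic expansion of the nonnegative integer $k$, then
\[
{y \choose k} \equiv \prod_{i=0}^\infty {b_i \choose a_i} \pmod p,
\]
where the product is really finite since $a_i = 0$ for large $i$ and ${b_i \choose 0} = 1$. (The $p$-adic version follows from the classical one by continuity: ${y \choose k}$ is a continuous function of $y$ on $\Zp$ for fixed $k$, and both sides agree on the dense set of nonnegative integers.)

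First I would write out the $p$-adic expansions $y = \sum_i b_i p^i$ and $k = \sum_i a_i p^i$. By Definition \ref{pi(n)} applied with $q = p$ and the permutation $\rho$ underlying $\sigma$ (so $\sigma = \rho_\ast$), we have $\sigma y = \sum_i b_i p^{\rho(i)}$ and $\sigma k = \sum_i a_i p^{\rho(i)}$; reindexing by $\rho$, the $\rho(i)$-th $p$-adic digit of $\sigma y$ is $b_i$ and the $\rho(i)$-th digit of $\sigma k$ is $a_i$. Note $\sigma k$ is again a nonnegative integer by Part 3 of Proposition \ref{basicS}, so Lucas applies to ${\sigma y \choose \sigma k}$ as well.

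Then I would apply Lucas's theorem to both sides:
\[
{y \choose k} \equiv \prod_{i=0}^\infty {b_i \choose a_i} \pmod p,
\qquad
{\sigma y \choose \sigma k} \equiv \prod_{j=0}^\infty {(\sigma y)_j \choose (\sigma k)_j} \pmod p,
\]
where $(\cdot)_j$ denotes the $j$-th $p$-adic digit. In the second product, substitute $j = \rho(i)$; since $\rho$ is a bijection of $\{0,1,2,\dots\}$, this is just a reordering of the (commutative, and all but finitely many trivial) factors, and $(\sigma y)_{\rho(i)} = b_i$, $(\sigma k)_{\rho(i)} = a_i$. Hence the two products are equal, which gives the congruence.

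The only real subtlety — the ``main obstacle,'' though it is modest — is making sure the infinite products are legitimate: one must observe that $a_i = 0$ for all $i$ beyond the (finite) length of $k$, so ${b_i \choose a_i} = 1$ there, making the product genuinely finite, and that $\rho$ maps the finite support of $k$'s digits bijectively onto the finite support of $\sigma k$'s digits. One should also take a moment to justify the $p$-adic form of Lucas's theorem if it is not to be quoted directly: fix $k$, note $y \mapsto {y \choose k}$ is given by a polynomial in $y$ hence continuous on $\Zp$, invoke density of $\N$ in $\Zp$, and reduce to the classical statement for nonnegative integers. Everything else is bookkeeping.
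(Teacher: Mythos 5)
Your proof is correct and takes essentially the same route as the paper, which also cites Lucas's formula as the whole argument; you have simply spelled out the digit-reindexing and the finiteness/continuity bookkeeping that the paper leaves implicit.
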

\begin{proof} This follows immediately from Lucas' formula.\end{proof}
Next, we recall the definition of the algebra of divided power series over
a field $L$. 
These are formal sums of the form
$$\sum_{i=0}^\infty c_i\frac{z^i}{i!}\,,$$
where $\{c_i\}\subseteq L$ and one has the obvious multiplication
$$\frac{z^i}{i!}\cdot \frac{z^j}{j!}:={{i+j}\choose i}\frac{z^{i+j}}{(i+j)!}\,.$$
As the binomial coefficient is an integer, this definition works in
all characteristics.

\begin{prop}\label{binom3}
Let $i$ and $j$ be two nonnegative integers. Let $\sigma\in S_{(p)}$. Then
\begin{equation}\label{binom4}
{{i+j}\choose i}\equiv {{\sigma i+\sigma j}\choose \sigma i}\pmod{p}\,.
\end{equation}
\end{prop}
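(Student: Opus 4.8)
The plan is to reduce Proposition \ref{binom3} to Proposition \ref{binom1}, which we are allowed to assume. The natural move is to observe that $\binom{i+j}{i}$ is a binomial coefficient of the shape $\binom{y}{k}$ with $y = i+j$ and $k = i$, so one would like to apply Proposition \ref{binom1} with this choice. But the subtlety — and the point where one must be careful — is that Proposition \ref{binom1} compares $\binom{y}{k}$ with $\binom{\sigma y}{\sigma k}$, and here $\sigma y = \sigma(i+j)$, whereas the right-hand side of \eqref{binom4} features $\sigma i + \sigma j$, not $\sigma(i+j)$. These need not be equal: addition of $i$ and $j$ may involve carrying of $p$-adic digits, and $\sigma_\ast$ only respects carry-free addition (this is exactly Part 2 of Proposition \ref{basicS}). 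So the first step is to notice that, because we are working modulo $p$, we have a choice in how to write $i+j$.

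The key step is therefore the following: by Lucas' theorem, $\binom{i+j}{i} \equiv 0 \pmod p$ whenever the addition $i + j$ carries in base $p$. Hence if $i+j$ carries, both sides of \eqref{binom4} are $0$ modulo $p$ and there is nothing to prove — provided one also checks that $\sigma i + \sigma j$ carries precisely when $i+j$ does. This last point follows because $\sigma_\ast$ merely permutes the $p$-adic digit positions: $i$ and $j$ have a carry at some position iff the digit sum at some position exceeds $p-1$, and after applying $\sigma_\ast$ the same multiset of digit-pairs simply reappears in permuted positions, so a carry occurs for $\sigma i, \sigma j$ at position $\sigma(\text{that position})$. Thus "$i+j$ carries" is a $\sigma_\ast$-invariant condition, and in the carrying case \eqref{binom4} reads $0 \equiv 0$.

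In the remaining (carry-free) case, Part 2 of Proposition \ref{basicS} gives $\sigma_\ast(i+j) = \sigma_\ast(i) + \sigma_\ast(j)$, i.e. $\sigma(i+j) = \sigma i + \sigma j$ in our abbreviated notation. Now apply Proposition \ref{binom1} with $y = i+j \in \Zp$ (a nonnegative integer, hence certainly in $\Zp$) and $k = i$, a nonnegative integer: this yields $\binom{i+j}{i} \equiv \binom{\sigma(i+j)}{\sigma i} \pmod p$, and substituting $\sigma(i+j) = \sigma i + \sigma j$ gives exactly \eqref{binom4}. Combining the two cases completes the proof.

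The main obstacle is the one flagged above: getting from $\sigma(i+j)$ to $\sigma i + \sigma j$, which is false in general and only holds after one throws away the carrying case using Lucas. A cleaner alternative, avoiding the case split, is to argue directly with Lucas' formula: write $i = \sum a_e p^e$, $j = \sum b_e p^e$ with $0 \le a_e, b_e < p$; then $\binom{i+j}{i} \equiv \prod_e \binom{a_e+b_e}{a_e} \pmod p$ (interpreting $\binom{a_e+b_e}{a_e}$ as $0$ when $a_e + b_e \ge p$, which is Kummer/Lucas), and the right-hand side of \eqref{binom4} has digits $a_{\phi(e)}$ and $b_{\phi(e)}$ where $\phi = \sigma^{-1}$, so it equals $\prod_e \binom{a_{\phi(e)} + b_{\phi(e)}}{a_{\phi(e)}} = \prod_e \binom{a_e+b_e}{a_e}$ after reindexing. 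This is arguably the slicker route, but since Proposition \ref{binom1} is already in hand, the reduction above is the shorter write-up.
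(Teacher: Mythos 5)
Your proof is correct and follows essentially the same route as the paper: split on whether the addition $i+j$ carries in base $p$, handle the carrying case via Lucas (both sides vanish mod $p$, since carrying is preserved under digit permutation), and handle the carry-free case via Part 2 of Proposition \ref{basicS}. The alternative you sketch at the end (arguing directly from Lucas' product formula) is a fine variant but not needed.
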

\begin{proof}
Lucas' formula shows that if there is any carry over of $p$-adic digits
in the addition of $i$ and $j$, then ${{i+j}\choose i}$ is $0$ modulo
$p$. However, there is carry over of the $p$-adic digits in the sum
of $i$ and $j$ if and only if there is carry over in the sum of
$\sigma i$ and $\sigma j$; in this case both sums are $0$ modulo $p$.
If there is no carry over, then the result follows from Part 2 of Proposition
\ref{basicS}\end{proof} 

\begin{cor}\label{binom5}
Let $\sigma$ be as in the proposition. 
Then the mapping $\frac{z^i}{i!}\mapsto \frac{z^{\sigma i}}{\sigma (i)!}$ is
an algebra automorphism of the divided power series in characteristic $p$.
\end{cor}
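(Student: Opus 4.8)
The plan is to realize the stated map as an $L$-linear bijection that respects the structure constants of the divided power algebra, so that it is automatically an algebra automorphism. Write $\gamma_i := z^i/i!$ for the standard $L$-basis, so that the product is determined by $\gamma_i\gamma_j = {i+j \choose i}\,\gamma_{i+j}$ and $\gamma_0 = 1$ is the unit. Fix $\sigma \in S_{(p)}$. Since $S_{(p)}$ is a group, $\sigma^{-1}\in S_{(p)}$ as well, and both $\sigma$ and $\sigma^{-1}$ stabilize the nonnegative integers (Proposition \ref{basicS}, Part 3); hence $\sigma$ restricts to a bijection of $\{0,1,2,\dots\}$. Consequently the assignment $\gamma_i \mapsto \gamma_{\sigma(i)}$ merely permutes the basis, and extending it $L$-linearly defines a bijective $L$-linear endomorphism $\Phi$ of the algebra of divided power series, with inverse the $L$-linear map sending $\gamma_j \mapsto \gamma_{\sigma^{-1}(j)}$. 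No convergence question arises, since passing from $\sum_i c_i\gamma_i$ to $\sum_i c_i\gamma_{\sigma(i)}$ is just a reindexing along a bijection.

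Next I would verify that $\Phi$ is multiplicative. Because the product in the divided power algebra is $L$-bilinear and given termwise by the finite sums
\[
\Bigl(\sum_i a_i\gamma_i\Bigr)\Bigl(\sum_j b_j\gamma_j\Bigr) \;=\; \sum_n \Bigl(\sum_{i+j=n} a_ib_j {n \choose i}\Bigr)\gamma_n\,,
\]
it is enough to check $\Phi(\gamma_i\gamma_j) = \Phi(\gamma_i)\,\Phi(\gamma_j)$ for all nonnegative integers $i,j$, i.e. that
\[
{i+j \choose i}\,\gamma_{\sigma(i+j)} \;=\; {\sigma(i)+\sigma(j) \choose \sigma(i)}\,\gamma_{\sigma(i)+\sigma(j)} \qquad \text{in characteristic } p\,.
\]
By Proposition \ref{binom3} the two scalar coefficients already agree modulo $p$, so it remains only to match the two basis vectors whenever this common coefficient is nonzero. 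If ${i+j \choose i}\not\equiv 0 \pmod p$, then Lucas' formula shows there is no carry of $p$-adic digits in the sum $i+j$, and Part 2 of Proposition \ref{basicS} gives $\sigma(i+j)=\sigma(i)+\sigma(j)$, so the two sides coincide. If ${i+j \choose i}\equiv 0 \pmod p$, then also ${\sigma(i)+\sigma(j) \choose \sigma(i)}\equiv 0 \pmod p$, and both sides are $0$ regardless. Finally $\sigma(0)=0$, since the all-zero $p$-adic digit sequence is fixed by any digit permutation, so $\Phi(1)=\gamma_0=1$ and $\Phi$ is unital.

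I do not anticipate a genuine obstacle here: the essential input is entirely contained in Proposition \ref{binom3}, and the rest is the bookkeeping above. The one point that warrants care is the reduction ``it suffices to check multiplicativity on the monomials $\gamma_i$'' --- that is, the assertion that an $L$-linear bijection of this algebra of formal (hence infinite) sums which respects all of its structure constants is automatically an algebra automorphism. This is immediate from the displayed Cauchy product formula, each of whose coefficients is a finite sum, but it should be recorded explicitly so that the manipulations with infinite sums are justified.
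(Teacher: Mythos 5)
Your proof is correct, and it fills in the details the paper omits entirely (the corollary is stated with no proof, just as an evident consequence of Proposition \ref{binom3}). You have in fact caught the one genuinely delicate point that the paper leaves unspoken: Proposition \ref{binom3} only gives equality of the scalar coefficients, and the basis vectors $\gamma_{\sigma(i+j)}$ and $\gamma_{\sigma(i)+\sigma(j)}$ are a priori different; your observation that a nonzero binomial coefficient forces no carry, whence $\sigma(i+j)=\sigma(i)+\sigma(j)$ by semi-additivity (Part 2 of Proposition \ref{basicS}), is exactly what closes that gap, together with the remark that the zero-coefficient case is vacuous. The bookkeeping about reindexing formal sums and unitality via $\sigma(0)=0$ is correct as well, so this is the intended argument made explicit.
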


As was explained in Section 8.22 of \cite{go1} and \cite{co1}, the 
algebras of measures in characteristic $p$ are isomorphic to divided power
series algebras. More precisely, let $R:=\Fq[[u]]$. One first picks
a basis for the Banach space of continuous $\Fq$-linear functions; then,
using the $q$-adic expansion of an integer $t$, one obtains an associated
basis for the Banach algebra of {\it all} continuous functions from $R$ to
itself. One then sees readily that the algebra of $R$-valued measures on
$R$ (equipped with convolution as usual) is thus isomorphic to the formal
divided power series algebra over $R$. Therefore the next corollary
follows immediately.

\begin{cor}\label{binom6} The group $S_{(p)}$ acts as automorphisms
of the convolution algebra of $R$-valued measures on $R$.\end{cor}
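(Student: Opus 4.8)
The plan is to bootstrap Corollary~\ref{binom6} from Corollary~\ref{binom5} by transporting the $S_{(p)}$-action through the isomorphism between the convolution algebra of $R$-valued measures on $R$ and the divided power series algebra over $R$, where $R=\Fq[[u]]$. First I would recall explicitly the chain of identifications referenced in the paragraph: one fixes an orthonormal basis $\{e_i\}$ for the Banach space of continuous $\Fq$-linear functions $R\to R$ indexed by $i\geq 0$; then, writing an arbitrary nonnegative integer $t$ $q$-adically as $t=\sum_i c_iq^i$ with $0\le c_i<q$, one forms the products $E_t:=\prod_i e_i^{c_i}$ (suitably interpreted), and the family $\{E_t\}_{t\ge 0}$ is an orthonormal basis for the Banach algebra $C(R,R)$ of \emph{all} continuous functions. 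Dually, a measure $\mu$ is determined by its moments $\mu(E_t)$, and the convolution product on measures corresponds, under $\mu\mapsto\sum_t \mu(E_t)\frac{z^t}{t!}$, to the divided-power multiplication $\frac{z^i}{i!}\cdot\frac{z^j}{j!}={i+j\choose i}\frac{z^{i+j}}{(i+j)!}$ — this is precisely the content of Section~8.22 of \cite{go1} and \cite{co1}, which I am allowed to invoke.

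Granting that isomorphism $\Phi\colon(\text{measures},\ast)\xrightarrow{\ \sim\ }(\text{divided power series over }R)$, the proof is then immediate: given $\sigma\in S_{(p)}$, Corollary~\ref{binom5} provides an algebra automorphism $\Psi_\sigma$ of the divided power series algebra sending $\frac{z^i}{i!}\mapsto\frac{z^{\sigma i}}{\sigma(i)!}$, and I would simply define the action of $\sigma$ on measures to be $\Phi^{-1}\circ\Psi_\sigma\circ\Phi$. Being a composite of algebra isomorphisms, this is an algebra automorphism of the convolution algebra; and since $\sigma\mapsto\Psi_\sigma$ is a group homomorphism from $S_{(p)}$ into the automorphism group of the divided power algebra (which follows from $\Psi_\sigma\circ\Psi_\tau=\Psi_{\sigma\tau}$, itself a trivial consequence of $(\sigma\tau)(i)=\sigma(\tau(i))$), conjugation by $\Phi$ preserves this, so $S_{(p)}$ acts as a group of automorphisms. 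I would remark, for transparency, that the induced action on a measure $\mu$ is the explicit one $(\sigma\cdot\mu)(E_t)=\mu(E_{\sigma^{-1}t})$ after matching moments — i.e., $S_{(p)}$ acts on measures by permuting moments according to the permutation of $q$-adic "degrees" — which makes the automorphism property almost visibly clear and ties the statement back to the digit-permutation heuristic running through the whole section.

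The only real content beyond Corollary~\ref{binom5} is the verification that $\Psi_\sigma$ is genuinely an algebra automorphism and not merely an endomorphism, but that is already handled: it is bijective with inverse $\Psi_{\sigma^{-1}}$ since $\sigma$ is a permutation, and it is multiplicative by Corollary~\ref{binom5} (which in turn rests on Proposition~\ref{binom3}'s statement that $\sigma$ respects $p$-adic carries and hence congruences of binomial coefficients modulo $p$). Thus the anticipated main obstacle is not in the algebra at all but in being precise about the measure-algebra isomorphism: one must be careful that the basis $\{E_t\}$ of $C(R,R)$ is built from the $\Fq$-\emph{linear} basis via $q$-adic digits exactly so that the multiplication on the dual matches divided powers indexed by ordinary integers $t$ — and that the permutation $\sigma\in S_{(p)}$ acts on those integer indices $t$ in the way required for Corollary~\ref{binom5} to apply. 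I would therefore state this identification carefully (citing \cite{go1}, \S8.22 and \cite{co1}), and then present the conjugation argument; the proof proper is two or three lines.

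\begin{proof}
By Section~8.22 of \cite{go1} and \cite{co1}, the choice of an orthonormal basis of the continuous $\Fq$-linear maps $R\to R$, together with the $q$-adic expansion of the integer index, produces an orthonormal basis $\{E_t\}_{t\ge 0}$ of the Banach algebra of all continuous maps $R\to R$, and the resulting dual description identifies the convolution algebra of $R$-valued measures on $R$ with the divided power series algebra over $R$, a measure $\mu$ corresponding to $\sum_{t\ge 0}\mu(E_t)\,z^t/t!$. Call this isomorphism $\Phi$. Given $\sigma\in S_{(p)}$, Corollary~\ref{binom5} furnishes an algebra automorphism $\Psi_\sigma$ of the divided power series algebra with $\Psi_\sigma(z^i/i!)=z^{\sigma i}/\sigma(i)!$. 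Since $\sigma$ is a permutation of $\{0,1,2,\dots\}$, $\Psi_\sigma$ is invertible with inverse $\Psi_{\sigma^{-1}}$, and $\Psi_\sigma\circ\Psi_\tau=\Psi_{\sigma\tau}$ because $(\sigma\tau)(i)=\sigma(\tau(i))$. Hence $\sigma\mapsto \Phi^{-1}\circ\Psi_\sigma\circ\Phi$ is a homomorphism from $S_{(p)}$ into the automorphism group of the convolution algebra of $R$-valued measures on $R$. Concretely, this action sends a measure $\mu$ to the measure $\mu^\sigma$ determined by $\mu^\sigma(E_t)=\mu(E_{\sigma^{-1}t})$ for all $t\ge 0$.
\end{proof}
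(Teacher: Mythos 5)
Your proposal is correct and follows essentially the same route as the paper: the paper sets up the isomorphism between the convolution algebra of $R$-valued measures and the divided power series algebra (via the digit-principle basis $\{E_t\}$) in the paragraph preceding the corollary and then asserts that the corollary "follows immediately" from Corollary~\ref{binom5}, which is exactly your conjugation $\Phi^{-1}\circ\Psi_\sigma\circ\Phi$. You have merely made explicit the transport of structure (and the group-homomorphism check $\Psi_\sigma\circ\Psi_\tau=\Psi_{\sigma\tau}$) that the paper leaves implicit, so there is no substantive divergence.
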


\begin{example}\label{binom7} As an example of how Corollary \ref{binom6}
may be used, consider the field $k_\infty(\pi_\ast)\simeq \F_{q_\infty}((\pi_\ast))$.
It contains the ring $\F_{q_\infty}[[\pi_\ast]]$ which is obviously the completion
of $\F_{q_\infty}[\pi_\ast]$ at the ideal generated by $\pi_\ast$. One can then,
e.g., use the Carlitz polynomial basis, as Section 8.22 of \cite{go1}, to
obtain the isomorphism with the divided power series algebra.
\end{example}

\begin{rem}\label{toSinfty}
At first glance, classical theory would indicate that there should be
an extension of the action of $S_{(p)}$ to all of ${\mathbb S}_\infty$ which is analytic
in the first variable and which takes integer powers to integer powers.
However, one knows that the bijective rigid analytic maps from 
${\mathbb G}_m$ to itself are of the $x\mapsto cx^{\pm 1}$ for some nonzero
constant $c$. There simply do not appear to be enough
of these functions to extend the action
of  $S_{(p)}$ on the integers inside $\mathbb S_\infty$ to all of $\mathbb S_\infty$ {\em analytically} (in the first variable).  
\end{rem}

The above remark leads us to look at non-analytic maps in hopes of extending the 
action of
$S_{(q)}$ to $\mathbb C_\infty^*$ and thus all of $\mathbb S_\infty$.   Calculations, to be
given in \ref{assocomp},  lead to the following construction. 

\begin{defn}\label{k1}
We let $K_1:=\mathbb F_\infty((\pi_\ast))$.
\end{defn}
\noindent
Obviously $K_1$ is totally ramified extension of $K$. Let $x\in K_1^*$ which we
write as $x=\sum\limits_{i\gg-\infty} c_i\pi_*^i$. 

\begin{defn}\label{k2}  Let $\rho$ be a permutation of $\{0,1,2,\dots\}$ as
before and let $x\in K_1^\ast$ be written as just above. We set
\begin{equation}\label{k3}
\rho_*(x):=\sum_{i\gg-\infty} c_i\pi_\ast^{\rho_*(i)}\in K_1^\ast\,.
\end{equation}\end{defn}\noindent
This definition precisely works because, by Proposition \ref{basicS}, we know
that $\rho_\ast$ stabilizes both the nonpositive and 
nonnegative integers. Moreover, one can
easily modify the proof of Proposition \ref{basicS} to deduce the continuity of
$\rho_\ast$ on $K_1^\ast$. 

\begin{defn}\label{k4} We set 
$$\mathbb S_{\infty,\pi_\ast}:=K_1^\ast\times \mathbb Z_p\subset \mathbb S_\infty\,.$$
For $(x,y)\in \mathbb S_{\infty,\pi_\ast}$, we set
\begin{equation}\label{k5}
\rho_\ast (x,y):=(\rho_\ast(x),\hat{\rho}_\ast(y))\in \mathbb S_{\infty,\pi_\ast}\,,
\end{equation}
with $\hat{\rho}$ as in Definition \ref{rhohat}.
\end{defn}

One then easily computes that for an integer $j$
\begin{equation}\label{k6}
\rho_\ast(s_{-j})=s_{-\rho_\ast(j)}\,,
\end{equation}
which we would like for any extension of the action of $\rho$ on $\Zp$ and is
very natural in terms of trivial zeroes (see Subsection \ref{negev}). As of
this writing, we know of no natural way to extend Definition \ref{k4} to
all of $\mathbb S_\infty$.

\begin{rem}\label{vadicinter}
Let $v$ be a prime of $A$ where $A$ is now arbitrary. We discuss here
briefly the $v$-adic theory associated to $\zeta_{A,\pi}(s)$.
Let $k_v$ be the
associated completion of $k$ with fixed algebraic closure $\bar{k}_v$ equipped
with the canonical absolute value etc. Let
$\C_v$ be the associated completion.
As explained in Section 8.3 of \cite{go1}, the special polynomials
(Definition \ref{special}) interpolate to two-variable functions on
$$\C_v^\ast\times {\mathbb S}_v$$
where ${\mathbb S}_v=\Zp\times \Z/(q^\delta-1)$, and $\delta$ is determined
by the degree of $v$ plus a choice of injection, over $k$,
of the value field $V$ into $\C_v$. (This construction is quite similar
to the $p$-adic interpolation of classical $L$-values.)
The appropriate group of homeomorphisms
of ${\mathbb S}_v$ is then $S_{(q^\delta)}$ as its elements preserve
congruence classes modulo $q^\delta-1$. 
\end{rem}

Finally, we finish this section with the following finiteness result
whose easy proof will be left to the reader. Note first that by Part 5 of
Proposition \ref{basicS}, for any constant $c$,  the set $X(q,c)$ consisting
of all positive integers $n$ with $\ell_q(n)=c$ is stable under $S_{(q)}$.

\begin{prop}\label{finiteness1} 
The set $X(q,c)$ consists of finitely many orbits of $S_{(q)}$.
\end{prop}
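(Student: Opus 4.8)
The claim is that for fixed $q$ and fixed $c$, the positive integers $n$ with $\ell_q(n) = c$ form finitely many $S_{(q)}$-orbits. I would prove this by exhibiting an explicit, canonical representative in each orbit and showing there are only finitely many such representatives. The key observation is that the action of $S_{(q)}$ on a nonnegative integer $n = \sum_{i=0}^\infty c_i q^i$ (a finite sum) is simply a rearrangement of the digit sequence $(c_0, c_1, c_2, \dots)$ among the index positions $\{0, 1, 2, \dots\}$. Since a permutation $\rho$ of $\{0,1,2,\dots\}$ can send any position to any other position, the orbit of $n$ under $S_{(q)}$ consists of exactly those nonnegative integers whose multiset of nonzero digits (counted with multiplicity) equals the multiset of nonzero digits of $n$. (One must check that every rearrangement of a finitely-supported digit sequence is realized by some \emph{bijection} of $\{0,1,2,\dots\}$, which is immediate since the complement of a finite set is countably infinite.)

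Granting this description, the orbit of $n$ is determined by the multiset $M(n)$ of nonzero $q$-adic digits of $n$, which is a finite multiset of elements of $\{1, 2, \dots, q-1\}$. The constraint $\ell_q(n) = c$ forces $\sum_{d \in M(n)} d = c$, where the sum is over elements of the multiset with multiplicity. Therefore the number of orbits with $\ell_q(n) = c$ is at most the number of multisets of $\{1, \dots, q-1\}$ whose element-sum is $c$ — equivalently, the number of ways to write $c$ as an unordered sum of positive integers each at most $q-1$, a restricted partition count. This is visibly finite (indeed bounded by $p(c)$, the ordinary partition function), which completes the proof.

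The steps in order are: (1) observe that $S_{(q)}$ acts on finitely-supported digit sequences by position-permutation, so two nonnegative integers lie in the same orbit iff they have the same multiset of nonzero digits — here invoking Proposition \ref{basicS} parts 3 and 5 for the relevant stability statements, and a one-line surjectivity remark about bijections of a countable set; (2) note that $\ell_q(n) = c$ pins down the element-sum of that multiset; (3) bound the number of such multisets by a restricted partition count and conclude finiteness.

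**Main obstacle.** There is essentially no serious obstacle — this is a bookkeeping argument, which is presumably why the author left it to the reader. The one point that requires a moment's care is the surjectivity claim in step (1): that \emph{every} permutation of the finitely-supported-digit data comes from an honest bijection of $\{0,1,2,\dots\}$ (not just an injection or a partial map), so that the $S_{(q)}$-orbit is genuinely the full set of digit-rearrangements rather than some proper subset. This holds because moving finitely many digits around and extending by the identity on a cofinite set always gives a bijection. With that dispatched, everything else is immediate.
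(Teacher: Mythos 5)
Your argument is correct and is essentially the intended one: since the paper leaves the proof to the reader, the natural route is exactly what you give — the orbit of a nonnegative integer under $S_{(q)}$ is the set of integers with the same finite multiset of nonzero $q$-adic digits (extending any bijection between the two finite support sets to all of $\{0,1,2,\dots\}$ by a bijection of the cofinite complements), and the number of such multisets with element-sum $c$ is a restricted partition count, hence finite. Nothing is missing.
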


\section{$S_{(q)}$ as symmetries of $\zeta(s)$}\label{zetasymm}
In this last section, we present the evidence showing how $S_{(q)}$ and its
subgroups arise as symmetries of the $\zeta$-functions and
values of Section \ref{zvalues}.
The evidence we have is Eulerian by its very nature as we {\it only}
use the special values. However, unlike Euler, we cannot now guess at the
mechanism that exhibits these groups as automorphisms of the full two-variable
zeta function.

As we saw in Proposition \ref{basicS}, the group $S_{(q)}$ permutes
both the positive and negative integers. Each set of integers separately
gives evidence that $S_{(q)}$ acts as symmetries of $\zeta(s)$.
However, it may ultimately turn out that both types of
evidence are really manifestations of the same underlying symmetries.

We begin with the evidence from the negative integers.
\subsection{Evidence from $\zeta$-values at negative integers}\label{negev}
In this section we present the evidence that $S_{(q)}$ acts as symmetries
of $\zeta (s)$ arising from the negative integers. We believe that this evidence
has greater impact than the evidence given in Subsection \ref{posev} (at the
positive integers) because it represents
actual symmetries associated to the zeroes of $\zeta(s)$.
We shall see, experimentally at least, that
the orders of vanishing of $\zeta(s)$ at negative
integers (as recalled in Subsection \ref{zvalues})
appear to be invariants of the action of $S_{(q)}$.

Here is what is known about such vanishing in general.
Recall that $q_\infty:=q^{d_\infty}$ where $d_\infty$ is the degree of
$\infty$ over $\Fq$. Let $j$ be
a positive integer which is divisible by $q_\infty-1$. 
Then it is known that $\zeta(-j)=0$ (see, e.g. Section 8.13 of
\cite{go1}). Current theory
naturally gives very classical looking {\it lower bounds} on the order
of vanishing of these ``trivial zeroes.'' 
In the case $d_\infty=1$ this bound is $1$; when $d_\infty>1$, it may
be greater than $1$. As our examples here all have $d_\infty=1$, we refer
the interested reader to \cite{go1} for the general case.

The first example is $A=\Fq[T]$. Here it is known  \cite{sh1} that {\it all}
zeroes are simple and that $\zeta(-j)\neq 0$ for $j\not\equiv 0 \pmod{q-1}$.
By Part 6 of Proposition \ref{basicS}, we have $j\equiv \rho_\ast(j)\pmod{q-1}$.
Thus the next proposition follows immediately.
\begin{prop}\label{invnegint}
Let $A=\Fq[T]$. Then the order of vanishing of $\zeta(s)$ at $-j$, $j$ positive,
is an invariant of the action of $S_{(q)}$ on the positive integers. \end{prop}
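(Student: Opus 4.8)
The plan is to reduce the statement entirely to facts already assembled in the excerpt. The order of vanishing of $\zeta(s)$ at $-j$ depends, by definition (see the paragraph following Lemma~\ref{depend1}), only on the entire power series $z(x,-j) = z_A(x,-j)$ of Definition~\ref{special}; indeed for $A = \Fq[T]$ the value $\zeta(-j)$ and the order of the zero of $\zeta_{A,\pi}(x,0)$ are governed by this special polynomial. So it suffices to show that the quantity ``order of vanishing of $z_A(x,-j)$ at the relevant point'' is unchanged when $j$ is replaced by $\rho_\ast(j)$ for any $\rho \in S_{(q)}$.

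First I would invoke the two structural inputs for $A = \Fq[T]$ cited in Subsection~\ref{negev}: (i) by Sheats \cite{sh1}, every zero of $z_A(x,-j)$ (equivalently of $\zeta_{A,\pi}(x,0)$) is simple, so the order of vanishing at any point is either $0$ or $1$; and (ii) again by \cite{sh1}, $\zeta(-j) \neq 0$ whenever $j \not\equiv 0 \pmod{q-1}$, while $\zeta(-j) = 0$ (a trivial zero) precisely when $(q-1) \mid j$ (here $q_\infty = q$ since $d_\infty = 1$). Combining (i) and (ii), the order of vanishing of $\zeta(s)$ at $-j$ for positive $j$ equals $1$ if $(q-1) \mid j$ and $0$ otherwise. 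In particular this order is a function of the residue class of $j$ modulo $q-1$ alone.

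The final step is the congruence Part~6 of Proposition~\ref{basicS}: for any integer $n$ and any $\rho \in S_{(q)}$ one has $n \equiv \rho_\ast(n) \pmod{q-1}$. Applying this with $n = j$, the positive integers $j$ and $\rho_\ast(j)$ lie in the same residue class mod $q-1$, hence by the previous paragraph $\zeta(s)$ has the same order of vanishing at $-j$ as at $-\rho_\ast(j)$. Since $\rho_\ast$ permutes the positive integers (Parts 3 and 5 of Proposition~\ref{basicS} guarantee it is a well-defined self-map stabilizing the positive integers, and it is invertible via $\phi_\ast$ for the inverse permutation $\phi$), this says exactly that the order-of-vanishing function $j \mapsto \operatorname{ord}_{-j}\zeta(s)$ is constant on $S_{(q)}$-orbits, i.e.\ is an invariant of the action.

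There is essentially no obstacle here: the proposition is a formal consequence of Sheats' Riemann hypothesis for $\Fq[T]$ (which pins the order of vanishing down to $0$ or $1$ and identifies which case occurs purely modulo $q-1$) together with the elementary congruence Part~6 of Proposition~\ref{basicS}. The only point requiring a little care is to state precisely which ``order of vanishing'' is meant --- namely that of the entire series $\zeta_{A,\pi}(x,y_0)$ in $x$ at $y_0 = 0$, as defined just after Lemma~\ref{depend1} --- and to note that by Lemma~\ref{depend1} this order is independent of the choice of positive parameter $\pi$, so the statement is well-posed. Everything else is immediate, which is why the author (rightly) says the proof ``follows immediately.''
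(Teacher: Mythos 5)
Your proposal is correct and reproduces the paper's argument: Sheats' result for $\Fq[T]$ (all zeroes simple, with $\zeta(-j) \neq 0$ exactly when $(q-1)\nmid j$) shows the order of vanishing at $-j$ equals $0$ or $1$ and is determined by $j \bmod (q-1)$, and Part 6 of Proposition~\ref{basicS} gives $j \equiv \rho_\ast(j) \pmod{q-1}$. One small slip: you repeatedly write ``the entire series $\zeta_{A,\pi}(x,y_0)$ at $y_0 = 0$,'' but the relevant one-variable series for the value at $-j$ is $\zeta_{A,\pi}(x,-j)$ (equivalently, $z(x,-j)$ after the substitution $x \mapsto x\pi_\ast^j$); your use of $z(x,-j)$ elsewhere shows you had the right object in mind, so this is only a typo, not a gap.
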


By itself, Proposition \ref{invnegint} is certainly not overwhelming evidence
for realizing $S_{(q)}$ as a symmetry of the full zeta function.
However, in the case of some nonpolynomial $A$ with $\infty$ rational, 
Dinesh Thakur \cite{th1}, Theorem 5.4.9 of \cite{th2}, and Javier Diaz-Vargas
\cite{dv2} have produced some fundamentally important calculations on zeroes at
negative integers. They found examples of trivial zeroes where the 
natural lower bound was {\it not} the exact order of vanishing;
of course, this is something that never
happens in the classical analytic theory of $L$-series (but see
\ref{classbern} for analogies in the classical 
theory of Bernoulli numbers as well
as justification for the terminology which follows).
Zeroes arising at the negative integers with order of vanishing 
strictly greater than the predicted
lower bound will called ``irregular,'' otherwise they are said to be
``regular.''  (N.B.: In earlier versions of this work,
as well as other papers, we called such zeroes ``nonclassical''.)

The calculations of Thakur and Diaz-Vargas seem to imply that the irregular
trivial zeroes which occur at $-j$, $j$ a nonnegative integer, actually
occur where $\ell_q(j)$ is {\it bounded}. 
Moreover, their calculations {\it also} continue to exhibit $S_{(q)}$ invariance
of the orders of vanishing of even these irregular zeroes.

As such we expect Proposition \ref{invnegint} to remain true for general
$A$ where $q$ will need to be replaced by $q_\infty$.

\begin{rem}\label{finiteness2}
If Proposition \ref{invnegint} is true for general $A$, and the
irregular trivial zeroes are characterized by having bounded sums of
$q$-adic digits, then Proposition
\ref{finiteness1} immediately implies that there are only finitely many
possibilities for the order of zero at irregular trivial zeroes. This result
is quite reasonable to believe.
\end{rem}

As analytic objects on ${\mathbb S}_\infty$, 
our zeta functions are naturally 1-parameter
families of entire power series where the parameter is $y\in \Zp$. Having
such a huge group acting on $\Zp$ may ultimately give us good control
of the family. We will see serious evidence for this in our next
subsection.

\subsection{Evidence from special polynomials}\label{specialev}
We begin by recalling some relevant history as well as results. Let $i$
and $e$ be nonnegative integers. Let $A=\Fq[T]$ and let $A^+(e)$ be the monic
polynomials of degree $e$ inside $A$. Define 
\begin{equation}\label{sum1}
S_e(i):=\sum_{f\in A^+(e)}f^i\,.
\end{equation}  
Let $\pi$ be a uniformizer in $k_\infty$ as before with associated
$1$-unit parts $\langle f \rangle=\pi^ef$. Let $t\in \Zp$. Define
\begin{equation}\label{sum2}
\tilde{S}_e(t):=\sum_{f\in A^+(e)}\langle f \rangle^t\,.
\end{equation}
Clearly $\tilde{S}_e(t)$ is continuous in $t$.
Obviously, for $i$ a nonnegative integer, $\pi^{ei}S_e(i)=\tilde{S}_e(i)$,
and so both sides are nonzero for the same $i$. 

In \cite{ca4}, L.\ Carlitz mentions the following necessary and sufficient
criterion for $S_e(i)$ to be nonzero: There should be an expression
\begin{equation}\label{decomp1}
i=i_0+i_1+\cdots+i_e\,,\end{equation}
such that all $i_j$ are nonnegative, and such that
\begin{enumerate}
\item there is no carryover of $p$-adic digits in the sum,
\item for  $0\leq j<e$, we have $i_j>0$ and divisible by $q-1$.
\end{enumerate}
\noindent
We agree to call such an expression for $i$ an ``admissible representation
for $i$ relative to $e$.'' The necessity actually follows easily
from expanding $f \in A^+(e)$ by the multinomial theorem.

Dinesh Thakur astutely 
realized that Carlitz's criterion, along with a corresponding
formula for the degree of $S_e(i)$, could be used to compute the
Newton polygons associated to the power series arising from $\zeta_A$ (which do
{\em not} depend on the choice of $\pi$).
However, it fell to J.\ Sheats, in \cite{sh1}, to give the first rigorous
proof of Carlitz's assertions for general $q$. Along the way, Sheats also
established the following results.

\begin{prop}\label{sheats1}
{\rm 1}. $\tilde{S}_e(t)=0$ implies that $t$ is a nonnegative integer.\\
{\rm 2}. Let $i$ be a nonnegative integer. Then 
$$S_e(i)=0\Rightarrow S_{e+1}(i)=0\,,$$
for all $e\geq 0$.
\end{prop}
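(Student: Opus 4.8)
Throughout, $A=\Fq[T]$, $\pi=1/T$, and $A^+(e)$, $S_e(i)$, $\tilde S_e(t)$ are as in Subsection~\ref{specialev}. I would dispatch Part~2 first, since it is purely combinatorial and independent of the analytic part. The plan is to use Carlitz's criterion directly — now a theorem thanks to Sheats — and never mention $\tilde S_e$. Recall $S_e(i)\ne 0$ if and only if $i$ has an \emph{admissible representation relative to $e$}: $i=i_0+\cdots+i_e$ with every $i_j\ge 0$, with no carry of $p$-adic digits, and with $i_j>0$ and $(q-1)\mid i_j$ for $0\le j<e$. Suppose $S_{e+1}(i)\ne 0$ and fix an admissible representation $i=i_0+\cdots+i_e+i_{e+1}$ relative to $e+1$, so that $i_0,\dots,i_e$ are all positive and divisible by $q-1$. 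Merge the last two summands: set $i_e':=i_e+i_{e+1}$. A partial sum of a carry-free sum is carry-free, so $i_e+i_{e+1}$ involves no carries and hence $i=i_0+\cdots+i_{e-1}+i_e'$ remains carry-free; moreover $i_0,\dots,i_{e-1}$ are positive and divisible by $q-1$, and $i_e'\ge 0$. Thus $i$ has an admissible representation relative to $e$, i.e.\ $S_e(i)\ne 0$. Contraposing gives $S_e(i)=0\Rightarrow S_{e+1}(i)=0$. (Equivalently: for fixed $i$ the set $\{e:S_e(i)\ne 0\}$ is an initial segment of $\Z_{\ge 0}$, nonempty because $S_0(i)=1$.)

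For Part~1 the plan is induction on $e$ via a recursion I would establish first. Because $\pi T=1$, one has $\langle Tf+a\rangle=\langle f\rangle+a\pi^{e+1}$ for $f\in A^+(e)$ and $a\in\Fq$, and $A^+(e+1)=\{Tf+a:f\in A^+(e),\ a\in\Fq\}$ bijectively. Writing $(\langle f\rangle+a\pi^{e+1})^t=\langle f\rangle^t\bigl(1+a\pi^{e+1}\langle f\rangle^{-1}\bigr)^t$, expanding the second factor by the binomial theorem, interchanging the (ultrametrically convergent) sums, and using $\sum_{a\in\Fq}a^k=-1$ when $(q-1)\mid k$ and $k>0$ and $0$ otherwise, one obtains
\[
\tilde S_{e+1}(t)=-\sum_{\substack{k\ge 1\\ (q-1)\mid k}}\binom{t}{k}\,\pi^{(e+1)k}\,\tilde S_e(t-k),\qquad \tilde S_0(t)\equiv 1 .
\]
The base case $e=0$ is immediate: $\tilde S_0$ never vanishes, and $0\in\Z_{\ge 0}$.

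Assume the statement for $e$ and suppose $\tilde S_{e+1}(t)=0$. I would compare $\pi$-adic valuations on the right-hand side: the $k$-th term has valuation $(e+1)k+v_\infty(\tilde S_e(t-k))$ whenever $\binom{t}{k}\not\equiv 0\pmod p$. The aim is to produce a \emph{unique} term of minimal valuation — which then cannot cancel — forcing the corresponding $\tilde S_e(t-k)$ to be the obstruction and, by the inductive hypothesis applied to $t-k$, forcing $t\in\Z_{\ge 0}$. Two inputs drive this: (i) Sheats' explicit formula for $v_\infty(\tilde S_e(s))$ on the nonnegative integers — equivalently his degree formula for $S_e(s)$ — extended by continuity to $\Zp$, used to bound $v_\infty(\tilde S_e(t-k))$ and show it grows slowly enough in $k$ that the factor $\pi^{(e+1)k}$ dictates the comparison; and (ii) Lucas' congruence, to decide which $k$ actually contribute, together with a pigeonhole argument on the base-$p$ digits of $t$ — if $t\notin\Z_{\ge 0}$ then $t$ has infinitely many nonzero base-$p$ digits, and from a sparse block of them one manufactures a positive multiple $k$ of $q-1$ whose digits are dominated by those of $t$, so that $\binom{t}{k}\not\equiv 0\pmod p$ and that term is isolated in valuation.

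The hard part is precisely this last step, and it is the combinatorial core of Sheats' paper. The naive choice $k=q-1$ at every stage does not work, because the relevant multinomial coefficients can vanish modulo $p$ (already $\binom{2(p-1)}{p-1}\equiv 0\pmod p$), so one genuinely needs Sheats' optimal-representation analysis to pin down the dominant term of the recursion. Relative to the integer case covered by Carlitz's criterion, the extra difficulty for non-integral $t$ is ruling out accidental $p$-adic cancellation, which is exactly what the Lucas-plus-digits argument is designed to prevent; I would expect the uniform control of $v_\infty(\tilde S_e(t-k))$ across the multiples of $q-1$ to be the single most delicate piece of bookkeeping.
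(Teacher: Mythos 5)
First, a point of comparison: the paper does not prove Proposition~\ref{sheats1} at all --- it attributes both parts to Sheats \cite{sh1}, so your attempt is measured against whether it actually constitutes a proof. Your Part~2 does: granting the Carlitz criterion as established by Sheats (which the paper itself quotes), the merging argument is correct --- an admissible representation $i=i_0+\cdots+i_{e+1}$ relative to $e+1$ yields one relative to $e$ by setting $i_e':=i_e+i_{e+1}$, since regrouping a carry-free sum stays carry-free and the divisibility/positivity conditions on $i_0,\dots,i_{e-1}$ are untouched; contraposition gives the claim. Your recursion for Part~1 is also correct as stated: $A^+(e+1)=\{Tf+a\}$, $\langle Tf+a\rangle=\langle f\rangle+a\pi^{e+1}$, and the character sum $\sum_{a\in\Fq}a^k$ kills all $k$ except positive multiples of $q-1$ (the $k=0$ term dying because $q\equiv 0$ in characteristic $p$).

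The genuine gap is in the inductive step of Part~1, and you have in effect conceded it yourself. To conclude $\tilde S_{e+1}(t)\neq 0$ for $t\notin\Z_{\geq 0}$ you must exhibit, among the infinitely many indices $k$ with $(q-1)\mid k$ and $\binom{t}{k}\not\equiv 0\pmod p$, one whose term $(e+1)k+v_\infty(\tilde S_e(t-k))$ is \emph{strictly} minimal; nothing in your write-up produces such a $k$ or controls $v_\infty(\tilde S_e(t-k))$ uniformly in $k$. The appeal to ``Sheats' explicit formula for $v_\infty(\tilde S_e(s))$ on the nonnegative integers, extended by continuity to $\Zp$'' does not work as stated: continuity of $\tilde S_e$ gives no formula for the valuation at non-integral points (one would need the valuation function on integers to be locally constant near $t-k$ in the relevant range, which is itself part of the hard analysis), and the inductive hypothesis only tells you $\tilde S_e(t-k)\neq 0$, not how small it is. Since you then invoke ``Sheats' optimal-representation analysis'' to isolate the dominant term --- i.e.\ precisely the content of the result being proved --- Part~1 remains a plausible strategy sketch rather than a proof; the valuation bookkeeping you flag as ``the single most delicate piece'' is exactly what is missing, and it is why the paper simply cites \cite{sh1} here rather than arguing along these lines.
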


\begin{prop}\label{admissible}
Let $i$ be a nonnegative integer with $i=i_0+i_1+\cdots+i_e$ an
admissible representation of $i$ relative to $e$. Let $\rho_\ast\in
S_{(q)}$. Then $\rho_\ast (i)=\rho_\ast(i_0)+\cdots+\rho_\ast (i_e)$ is an
admissible representation of $\rho_\ast(i)$ relative to $e$.
\end{prop}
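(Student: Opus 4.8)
The plan is to verify the two defining conditions of an admissible representation for the permuted decomposition $\rho_\ast(i) = \rho_\ast(i_0) + \cdots + \rho_\ast(i_e)$ directly, by reducing each to a property of $\rho_\ast$ already established in Proposition \ref{basicS}. First I would observe that since each $i_j$ is a nonnegative integer, Part 3 of Proposition \ref{basicS} tells us each $\rho_\ast(i_j)$ is again a nonnegative integer, so the candidate decomposition at least consists of nonnegative terms. The real content is (a) that there is no carryover of $p$-adic digits in the sum $\rho_\ast(i_0) + \cdots + \rho_\ast(i_e)$, and that this sum actually equals $\rho_\ast(i)$; and (b) that for $0 \le j < e$, each $\rho_\ast(i_j)$ is positive and divisible by $q-1$.

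For (a), the hypothesis is that there is no carryover of $p$-adic digits when forming $i_0 + \cdots + i_e$; equivalently, for each $p$-adic place $m$, the sum of the $m$-th $p$-adic digits of the $i_j$ is less than $p$. The key point is that a single application of $\rho_\ast$ permutes blocks of $p$-adic digits wholesale (the $q$-adic coefficient $c_i$ occupying the positions of $q^i = p^{n_0 i}$ gets moved to the positions of $q^{\rho(i)}$), and it does so by the \emph{same} permutation for every term $i_j$. Hence the multiset of $p$-adic digits appearing at a given place after applying $\rho_\ast$ to all the $i_j$ is exactly the multiset appearing at some other (commonly determined) place before; in particular no column sum can increase past $p-1$. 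This is precisely the mechanism already packaged in Part 2 of Proposition \ref{basicS} (``semi-additivity''): by induction on $e$, carryover-free addition is preserved and $\rho_\ast$ distributes over it, giving both $\rho_\ast(i) = \sum_j \rho_\ast(i_j)$ and the absence of carryover in that sum. I would spell out the induction: if $i = i_0 + (i_1 + \cdots + i_e)$ with the partial sum $i_1 + \cdots + i_e =: i'$ itself carryover-free and disjoint in support from $i_0$, then $\rho_\ast(i) = \rho_\ast(i_0) + \rho_\ast(i')$ by Part 2, and inductively $\rho_\ast(i') = \sum_{j \ge 1} \rho_\ast(i_j)$ carryover-free, and the supports of $\rho_\ast(i_0)$ and $\rho_\ast(i')$ remain disjoint because $\rho$ is a bijection.

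For (b), fix $j$ with $0 \le j < e$. Positivity of $\rho_\ast(i_j)$ is immediate: $i_j > 0$ means some $q$-adic digit of $i_j$ is nonzero, and $\rho_\ast$ merely relocates that nonzero digit, so $\rho_\ast(i_j) \ne 0$; combined with Part 3 it is a positive integer. Divisibility by $q-1$ follows from Part 6 of Proposition \ref{basicS}: $i_j \equiv \rho_\ast(i_j) \pmod{q-1}$, and $q-1 \mid i_j$ by admissibility, so $q-1 \mid \rho_\ast(i_j)$. That completes the verification.

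I do not expect a serious obstacle here; the proposition is essentially a bookkeeping consequence of the structural properties of $\rho_\ast$ collected in Proposition \ref{basicS}, and the only point requiring a little care is the induction establishing that carryover-freeness is inherited by the permuted sum (together with the support-disjointness that keeps the induction going). The mildly delicate issue — that $\rho_\ast$ acts on $p$-adic digits in blocks of length $n_0$ rather than on individual $p$-adic digits when $q > p$ — is already absorbed into the statement and proof of semi-additivity, so I can invoke Part 2 as a black box and the argument stays short.
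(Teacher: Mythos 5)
Your proof is correct and follows the same route as the paper's, which disposes of this proposition with the single line ``This follows immediately from Parts 2 and 6 of Proposition \ref{basicS}.'' You are simply filling in the bookkeeping, and in doing so you correctly flag the small but real point the paper glosses over: Carlitz's admissibility condition is phrased in terms of $p$-adic carryover, while Part 2 of Proposition \ref{basicS} is phrased in terms of $q$-adic carryover, and one must observe that $\rho_\ast$ moves $p$-adic digits in aligned blocks of length $n_0$, so that the absence of $p$-adic column overflows (a statement about column sums) is preserved after permutation.

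One small caution about the ``spelled-out induction'': you posit that $i_0$ and $i' := i_1 + \cdots + i_e$ are ``disjoint in support,'' but for $p>2$ carryover-freeness does not imply support-disjointness (take $q=p=3$ and $i_0=i_1=1$: the $p$-adic supports overlap at position $0$ even though there is no carry). This does not damage the argument, because Part 2 of Proposition \ref{basicS} requires only the absence of $q$-adic carryover, which does follow from the admissibility hypothesis, and your earlier ``blocks'' observation already accounts for why the permuted sum stays $p$-adically carryover-free; but you should phrase the induction hypothesis as ``no carryover'' rather than ``disjoint support'' so that it is the statement you can actually verify at each step.
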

\begin{proof}
This follows immediately from Parts 2 and 6 of Proposition \ref{basicS}.
\end{proof}

\begin{cor}\label{admpoint5}
Let $e$, $i$ and $\rho$ be as above. Then we have
\begin{equation}\label{admpoint6}
S_e(i)=0 \Longleftrightarrow S_e(\rho_*(i))=0\,.
\end{equation}
\end{cor}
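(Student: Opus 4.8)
The plan is to deduce Corollary \ref{admpoint5} directly from Carlitz's nonvanishing criterion together with Proposition \ref{admissible}. Recall that Carlitz's criterion (as recalled just before this statement, and rigorously established by Sheats in \cite{sh1}) asserts that $S_e(i)=0$ if and only if there is \emph{no} admissible representation of $i$ relative to $e$. Equivalently, $S_e(i)\neq 0$ if and only if such an admissible representation exists. So the whole corollary reduces to the assertion that $i$ admits an admissible representation relative to $e$ if and only if $\rho_\ast(i)$ does.

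First I would prove the forward implication of this equivalence: if $i=i_0+i_1+\cdots+i_e$ is an admissible representation, then Proposition \ref{admissible} says $\rho_\ast(i)=\rho_\ast(i_0)+\cdots+\rho_\ast(i_e)$ is an admissible representation of $\rho_\ast(i)$ relative to $e$. Hence $S_e(i)\neq 0 \Rightarrow S_e(\rho_\ast(i))\neq 0$. For the reverse implication, I would invoke the fact that $S_{(q)}$ is a \emph{group} (Definition \ref{biggroup}): the inverse permutation $\rho^{-1}$ on $\{0,1,2,\dots\}$ gives an element $(\rho^{-1})_\ast = (\rho_\ast)^{-1}\in S_{(q)}$, and applying the forward implication to $\rho_\ast(i)$ with the permutation $(\rho_\ast)^{-1}$ yields $S_e(\rho_\ast(i))\neq 0 \Rightarrow S_e(i)\neq 0$. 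Combining the two gives $S_e(i)\neq 0 \Longleftrightarrow S_e(\rho_\ast(i))\neq 0$, which is the contrapositive of \eqref{admpoint6}.

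There is essentially no hard step here; the content has already been packaged into Proposition \ref{admissible} and into the Carlitz--Sheats criterion, both of which I am permitted to assume. The only point requiring a moment's care is the bookkeeping that $\rho_\ast$ on nonnegative integers is a genuine bijection with inverse $(\rho^{-1})_\ast$ — this is immediate from Definition \ref{pi(n)} and Part 3 of Proposition \ref{basicS}, since permuting the $q$-adic digit positions by $\rho$ and then by $\rho^{-1}$ returns each digit to its original slot. One should also note that the equivalence \eqref{admpoint6} is vacuously symmetric in the degenerate cases (e.g. $i=0$, where $S_e(0)=\#A^+(e)=q^e\neq 0$ in characteristic $p$ precisely when... actually $q^e$ may vanish mod $p$; but here everything is controlled uniformly by the criterion and by $\rho_\ast(0)=0$), so no special-case analysis is needed beyond citing the criterion. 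If one prefers to avoid the group-inverse argument, the reverse direction can instead be obtained by applying Proposition \ref{admissible} verbatim to $\rho_\ast(i)$ together with the permutation $\rho^{-1}$, which is the route I would actually write down since it keeps the proof within the single cited proposition.

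In summary, the proof is three sentences: (i) by Carlitz--Sheats, $S_e(m)\neq0$ iff $m$ has an admissible representation relative to $e$; (ii) by Proposition \ref{admissible} applied to $\rho$ and to $\rho^{-1}$, $m$ has such a representation iff $\rho_\ast(m)$ does; (iii) therefore $S_e(i)=0 \Leftrightarrow S_e(\rho_\ast(i))=0$. The ``main obstacle'' is purely expository — deciding how much of the Carlitz criterion to restate — rather than mathematical.
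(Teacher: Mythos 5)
Your proof is correct and follows exactly the route the paper intends: the paper's own proof is the one-line remark that the corollary "follows immediately from the Proposition and Sheats' results," which is precisely your combination of the Carlitz--Sheats nonvanishing criterion with Proposition \ref{admissible}. Your explicit observation that the biconditional needs Proposition \ref{admissible} applied to both $\rho$ and $\rho^{-1}$ (using that $S_{(q)}$ is a group) is a sensible unpacking of the word "immediately," and the aside about $i=0$ is harmless since $\rho_\ast(0)=0$.
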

\begin{proof} This follows immediately from the Proposition and
Sheats' results. \end{proof}

\begin{cor}\label{admissible1}
Let $j$ be a nonnegative integer with associated special polynomial
$z(x,-j)$ ({\rm Definition \ref{special}}). Let $\rho_\ast$ be as in the proposition. Then $z(x,-j)$ and 
$z(x,-\rho_\ast(j))$ have the same degree in $x^{-1}$.\end{cor}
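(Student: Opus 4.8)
The plan is to recall the explicit description of the special polynomial $z(x,-j)$ in terms of the power sums $S_e(i)$, and then deduce the equality of degrees from the already-established equivalence $S_e(i)=0 \Longleftrightarrow S_e(\rho_\ast(i))=0$ of Corollary \ref{admpoint5}. First I would observe that, for $A=\Fq[T]$, the inner sum $\sum_{\deg_k(\mathfrak I)=e}\mathfrak I^{(j)}$ appearing in Definition \ref{special} is exactly $S_e(j)=\sum_{f\in A^+(e)}f^j$, since every nonzero ideal of $\Fq[T]$ has a unique monic generator and the degree of the ideal is the degree of that generator. Hence
\begin{equation}\label{specialasS}
z(x,-j)=\sum_{e=0}^\infty S_e(j)\, x^{-e}\,,
\end{equation}
a polynomial in $x^{-1}$ whose degree is the largest $e$ with $S_e(j)\neq 0$.

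The key point is then that this largest index is the same for $j$ and for $\rho_\ast(j)$. By Part 2 of Proposition \ref{sheats1}, the vanishing sets $\{e : S_e(j)=0\}$ are ``upward closed'': $S_e(j)=0$ forces $S_{e+1}(j)=0$. Therefore the degree of $z(x,-j)$ in $x^{-1}$ is precisely the largest $e$ such that $S_e(j)\neq 0$, equivalently $\deg_{x^{-1}} z(x,-j)=\min\{e : S_{e+1}(j)=0\}$ (and this set is nonempty since the sum in Equation \ref{regroup} vanishes for large $e$). Now Corollary \ref{admpoint5} gives $S_e(j)=0 \Longleftrightarrow S_e(\rho_\ast(j))=0$ for every $e\geq 0$; in particular the two upward-closed vanishing sets coincide, so their minimal elements agree, and hence $\deg_{x^{-1}} z(x,-j)=\deg_{x^{-1}} z(x,-\rho_\ast(j))$.

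I do not expect a serious obstacle here, as all the real work has been done in the preceding results; the only thing to be careful about is the identification in Equation \ref{specialasS}, i.e. checking that for $A=\Fq[T]$ the ideal-theoretic exponentiation $\mathfrak I^{(j)}$ specializes to $f^j$ for the monic generator $f$ — but this was already noted after Definition \ref{inf6} (where $(a)^{(j)}=a^j$ for positive $a$), and every ideal of $\Fq[T]$ is principal with a positive generator. One small point worth spelling out is the nonemptiness of $\{e : S_{e+1}(j)=0\}$, which is exactly the statement (recalled after Equation \ref{regroup}) that the inner sums vanish for sufficiently large $e$, guaranteeing $z(x,-j)$ really is a polynomial; combined with Part 2 of Proposition \ref{sheats1} this makes ``degree'' well-defined and equal to the threshold index, and the argument closes.
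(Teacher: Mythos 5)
Your proof is correct and follows exactly the intended route: identify $z(x,-j)=\sum_e S_e(j)x^{-e}$ for $A=\Fq[T]$, use Part 2 of Proposition \ref{sheats1} to see that the degree is the threshold index beyond which the $S_e$ vanish, and then invoke Corollary \ref{admpoint5} to conclude the thresholds for $j$ and $\rho_\ast(j)$ agree. The paper leaves this argument implicit, but what you wrote is the same reasoning, and the small points you flag (nonemptiness of the vanishing set, $(a)^{(j)}=a^j$ for positive $a$) are indeed the only things one needs to check.
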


\begin{cor}\label{admissible2} 
Let $X_e\subset \Zp$ be the zero set of $\tilde{S}_e(t)$. Then
$X_e$ is stable under the action of $S_{(q)}$ on $t$.\end{cor}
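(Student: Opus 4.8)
The plan is to reduce the statement to facts already in hand, chiefly Proposition \ref{sheats1}(1) and Corollary \ref{admpoint5}. Fix $\rho_\ast \in S_{(q)}$ and let $t \in X_e$, so $\tilde S_e(t) = 0$. First I would invoke Proposition \ref{sheats1}(1) to conclude that $t$ is in fact a nonnegative integer; thus the zero set $X_e$ consists entirely of nonnegative integers, and the action of $S_{(q)}$ on $X_e$ is just the action of $S_{(q)}$ on the nonnegative integers (which is well-defined by Part 3 of Proposition \ref{basicS}). Next, write $t = i$ a nonnegative integer; by the discussion preceding Proposition \ref{sheats1}, $\tilde S_e(i) = 0$ if and only if $S_e(i) = 0$ (they differ by the nonzero factor $\pi^{ei}$). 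So $X_e = \{\, i \geq 0 : S_e(i) = 0 \,\}$.

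Now the heart of the matter is Corollary \ref{admpoint5}, which gives $S_e(i) = 0 \iff S_e(\rho_\ast(i)) = 0$ for every nonnegative integer $i$ and every $\rho_\ast \in S_{(q)}$. Combining this with the previous paragraph, $i \in X_e \iff \rho_\ast(i) \in X_e$. Since $\rho_\ast$ is a bijection of the nonnegative integers (Proposition \ref{basicS}, Parts 1 and 3, together with the fact that the inverse permutation $\phi$ of $\rho$ induces $(\rho_\ast)^{-1}$), it follows that $\rho_\ast(X_e) = X_e$, i.e. $X_e$ is stable under $S_{(q)}$. This completes the argument.

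There is essentially no obstacle here: the corollary is a formal consequence of results already established, and the only subtlety is the bookkeeping observation that Proposition \ref{sheats1}(1) is what lets us pass from the a priori $p$-adic zero set of $\tilde S_e$ to an honest subset of the nonnegative integers, on which $S_{(q)}$ is defined by digit permutation and on which Corollary \ref{admpoint5} applies. If one wanted to be maximally economical, one could even skip mentioning Proposition \ref{sheats1}(1) and simply note that $S_{(q)}$ acts on all of $\Zp$ (Proposition \ref{basicS}, Part 1) while the continuity of $t \mapsto \tilde S_e(t)$ plus Corollary \ref{admpoint5} on the dense subset of nonnegative integers would extend the equivalence $\tilde S_e(t) = 0 \iff \tilde S_e(\rho_\ast(t)) = 0$ to all of $\Zp$ by continuity of both $\tilde S_e$ and $\rho_\ast$ --- but invoking Proposition \ref{sheats1}(1) directly is cleaner and avoids any density argument.
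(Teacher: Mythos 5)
Your main argument is correct and is essentially the paper's proof: both invoke Sheats' result (Proposition \ref{sheats1}, Part 1) to reduce to nonnegative integers and then apply the digit-permutation equivalence $S_e(i)=0 \Leftrightarrow S_e(\rho_\ast(i))=0$. One caution about the ``maximally economical'' alternative you sketch at the end: knowing that $\tilde S_e(t)=0 \Leftrightarrow \tilde S_e(\rho_\ast t)=0$ holds on the dense subset of nonnegative integers does \emph{not} extend to all of $\Zp$ by continuity alone, since two closed zero sets can agree on a dense set without being equal; Proposition \ref{sheats1}(1) is genuinely needed, not merely a convenience, so it is good that your actual proof relies on it.
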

\begin{proof} By Sheats we know that any zeroes of $\tilde{S}_e(t)$ must
be nonnegative integers. By the proposition, we therefore see that
$$\tilde{S}_e(t)\neq 0 \Longleftrightarrow \tilde{S}_e(\rho_\ast (t))\neq 0$$ 
for all $t$ and homeomorphisms $\rho_\ast$. 
So the result follows immediately. \end{proof}

\begin{rem}\label{admissible3}
Calculations suggest that $X_e$ consists of finitely many orbits of
$S_{(q)}$. \end{rem}
\begin{rem}\label{admissible4} 
Warren Sinnott \cite{si1} 
has studied a large class of functions on $\Zp$ containing
uniform limits of $\tilde{S}_e(t)$ (for arbitrary $e$ of course). Let
$f(t)$ be one nontrivial such function. Then Sinnott shows that the zero set of
$f(t)$ cannot contain an open set, unlike general continuous functions on
$\Zp$. The results given above suggest that the zero set of $f(t)$ may
in fact be countable.
\end{rem}

\begin{rem}\label{admissible5}
Gebhard B\"ockle has informed me that Sheats' results imply that the
degree of $z(x,-j)$ as a polynomial in $x^{-1}$ equals
$$\min_i\{[\ell_q(jp^i)/(q-1)]\}$$
where $i=0,1,\ldots, n_0-1$ and $q=p^{n_0}$. \end{rem}

\begin{rem}\label{adm6}
The results of Jeff Sheats just mentioned were established along the way of
Sheats' proof of the Riemann hypothesis for $\zeta_{\Fq[T]}(s)$. That is, as
mentioned before, 
following D. Wan, D. Thakur, J. Diaz-Vargas and B. Poonen, J.\ Sheats was able to
establish that $\zeta_{\Fq[T]}(s)$ has {\em at most} one 
(including multiplicity)
zero in $x^{-1}$ of a given absolute value for each fixed $y\in \Zp$; this
implies that the the zeroes in $x^{-1}$ must be simple
and ``lie on the line'' $\Fq((1/T))$.\end{rem}

It seems reasonable to expect that generalizations of the above results 
should hold for arbitrary $A$, etc. 

Let $K$ be a local non-Archimedean field of characteristic $0$. A standard
argument using Krasner's Lemma implies that $K$ has only finitely many
extensions of bounded degree. The same argument works in finite characteristic
for degrees less than the characteristic (so one avoids inseparability
issues).  

Therefore, suppose that $\mathfrak C$ is an orbit of the nonnegative
integers under $S_{(q_\infty)}$. Suppose that the special
polynomials associated to elements in $\mathfrak C$ all have the same degree which we
assume is less than $p$. We can
conclude that their zeroes all belong to a finite extension of $k_\infty$.

\begin{question}\label{bigq}
Is it true in general that the zeroes of special polynomials $i$ and
$j$, where $i$ and $j$ are in the same orbit under $S_{(q_\infty)}$, lie
in a finite extension of $k_\infty$?
\end{question}
The results of Sheats for $A=\Fq[T]$ imply that all the zeroes of special
polynomials lie
in $k_\infty$ in this case and so here the answer is obviously yes. 
Moreover, in \ref{assocomp} we will exhibit some evidence that the zeroes of special
polynomials in the same orbit are related via $S_{(q_\infty)}$, so it makes
sense to wonder if this relationship carries over to their splitting fields...

Finally, let $A=\Fq[T]$ again, and for the rest of this Subsection. Let $v=(f)$ 
be a prime of degree $d$ in $A$ with $f$ monic. Recall
that in Remark \ref{vadicinter} we explained briefly how our functions
interpolated $v$-adically; in the case at hand, it is easy to
see that $\delta=d$. Let $A^+(e)_v$ be the monic polynomials of
degree $e$ which are prime to $v$. Define
\begin{equation}\label{vadicsum}
\tilde{S}_{e,v}(t):=\sum_{f\in A_v^+(e)} f^t\,,
\end{equation}
where $t$ now belongs to $\mathbb S_v=\Zp\times \Z/(q^d-1)$. 
It is also reasonable to expect our results just given to hold $v$-adically. 
Let $X_{e,v}\subseteq {\mathbb S}_v$ be the zero set of $\tilde{S}_{e,v}(t)$
We simply point out that we already can deduce many orbits of $S_{(q^d)}$ on 
the nonnegative integers which lie in $X_{e,v}$. Indeed, if $e<d$ then $A_v^+(e)=A^+(e)$;
so $X_e\subseteq X_{e,v}$ (recall $X_e$ consists of nonnegative integers which
also project densely into $\mathbb S_v$) and is obviously stable under
$S_{(q^d)}$. 

Suppose now that $e\geq d$. Let $i$ be a nonnegative integer. 
Clearly,
$$\tilde{S}_{e,v}(i)= S_e(i)-f^iS_{e-d}(i)\,.$$
Thus, by Part 2 of Proposition \ref{sheats1} we see that $X_{e-d}\subseteq 
X_{e,v}$ and is obviously stable under $S_{(q^d)}$.

\subsubsection{Associated Computations} \label{assocomp}
In this subsection $A$ will always be $\Fq[T]$. We shall present here
some calculations, as well as a related result,
that indicate rather strongly that the zeroes
of $\zeta_A(s)$ are ``stable'' under the action given in Definition \ref{k4}.
Indeed, in Remark \ref{adm6}, we recalled that Sheats has established that all 
that all the zeroes of $\zeta_{\Fq[T]}(s)$ have $x$-coordinate
in $\Fq((1/T))$ {\em and} that 
there is at most
one zero of any given absolute value for fixed $y\in \Zp$; 
so, in particular, as elements of $\mathbb S_\infty$, the zeroes
actually lie in $\mathbb S_{\infty,\pi_\ast}=\mathbb S_{\infty,\pi}$.

We put the word stable, above, in quotes because the situation is far from clear at the moment.
Indeed, what actually appears to be stable is that part of the zeroes that
is {\em invariant} upon changing one positive parameter for another as
indicated in Lemma \ref{depend1}. However, we do 
not yet have a clear general statement of this even for $\Fq[T]$. Moreover,
for more general $A$, one knows that there are zeroes of 
$\zeta_A(s)$ lying {\em outside} $\mathbb S_{\infty,\pi_\ast}$.

\begin{prop}\label{Uone}
The set of positive parameters is a principal homogeneous space for the group
$U_1(K)$ of $1$-units.\end{prop}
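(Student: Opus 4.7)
The plan is to unpack the canonical decomposition of Equation \ref{decomp} and observe that the ``positive parameter'' condition exactly cuts out a $U_1(K)$-torsor inside the fibre $v_\infty^{-1}(1) \subset K^\ast$. Throughout, $K$ means $k_\infty$, so that every $x \in K^\ast$ has a unique expression $x = \zeta_x \pi^{v_\infty(x)} u_x$ with $\zeta_x \in \F_\infty^\ast$, $v_\infty(x) \in \Z$, and $u_x \in U_1(K)$.

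First, I would fix any positive parameter $\pi_0$ (e.g.\ the one chosen at the start of Subsection \ref{expo}) and define an action of $U_1(K)$ on the set $P$ of positive parameters by $u \cdot \pi := u\pi$. To see this is well-defined, note that if $u \in U_1(K)$ and $\pi \in P$, then $v_\infty(u\pi) = v_\infty(u) + v_\infty(\pi) = 0 + 1 = 1$, and the unique decomposition of $u\pi$ relative to $\pi_0$ satisfies $\zeta_{u\pi} = \zeta_u \cdot \zeta_\pi = 1 \cdot 1 = 1$, since both $u$ and $\pi$ are monic. Hence $u\pi \in P$. The action is obviously free, since $u\pi = \pi$ in $K^\ast$ forces $u = 1$.

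Next I would show transitivity. Let $\pi_1, \pi_2 \in P$ and put $u := \pi_1/\pi_2 \in K^\ast$. Then $v_\infty(u) = 1 - 1 = 0$ and, by multiplicativity of $x \mapsto \zeta_x$, we have $\zeta_u = \zeta_{\pi_1}/\zeta_{\pi_2} = 1$. Therefore $u \in U_1(K)$ and $u \cdot \pi_2 = \pi_1$, showing the action is transitive. Combined with freeness, $P$ is a principal homogeneous space for $U_1(K)$.

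There is really no hard step here; the whole argument is a direct application of the uniqueness of the decomposition in Equation \ref{decomp} together with the multiplicativity of the three components $\zeta_x$, $v_\infty(x)$, $u_x$. The only thing worth highlighting is the mild subtlety that the factorisation itself depends on a choice of $\pi$, but since positivity is equivalent to $\zeta_x = 1$ and this condition is \emph{independent} of which positive parameter one uses to set up the decomposition (the residue class of $x/\pi^{v_\infty(x)}$ in $\F_\infty^\ast$ is intrinsic), the torsor structure is canonical.
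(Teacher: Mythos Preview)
Your proof is correct and follows the same route as the paper: the paper's own argument is the one-line observation that $\pi_1/\pi_2 \in U_1(K)$ for any two positive parameters, and you have simply unpacked the remaining (easy) torsor axioms that the paper leaves implicit.
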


\begin{proof}
Let $\pi_1$ and $\pi_2$ be two positive parameters. Then the quotient
$\pi_1/\pi_2$ obviously belongs to $U_1(K)$. The result follows. \end{proof}

\begin{rem}\label{Utwo}
Let $\pi$ be a positive parameter and let $u\in K$ be a unit $\not \equiv 1\pmod{\pi}$.
Then $u\pi$ will be a positive parameter for a different choice of ``positive''.
\end{rem}

\begin{rem}\label{Uthree} In quantum field theory one is interested in the wave
function $\psi$. This is a complex valued function whose absolute value $\vert \psi
\vert$ gives predictions for results in physical experiments. In other words,
only $\vert \psi\vert$ has physical meaning. Let $U:=\{e^{i\theta}\}$ be the
``gauge group'' of complex numbers of norm $1$; obviously multiplication by an 
element of $U$ does {\em not} change the absolute value (and conversely).  
One is thus free to multiply
$\psi$ be functions of the form $e^{ip(x,t)}$ for arbitrary real valued
functions $p(x,t)$ depending on position and time. From this freedom many
remarkable things may be deduced. In the function field case, the ``gauge group''
$U_1(K)$ acts on the parameters and the values in line with Lemma \ref{depend1}. However,
as we shall see, we are interested in more invariants than just given
by the absolute value!
\end{rem}

Let $\pi_1$ and $\pi_2$ be two positive uniformizers. Lemma \ref{depend1} tells
that if  $(\alpha,y_0)$ is a zero of $\zeta_{A,\pi_1}(s)$  then 
$((\pi_1/\pi_2)^{y_0}\alpha,y_0)$ is a zero of $\zeta_{A,\pi_2}(s)$. It is then reasonable
to ask what invariants arise in passing from $\alpha$ to $u^y\alpha$ for $u=\pi_1/\pi_2\in U_1(K)$.
For instance, obviously $\vert\alpha\vert_\infty=\vert u^y\alpha\vert_\infty$, where
$\vert ?\vert_\infty$ is the absolute value associated to the place $\infty$, (and which is
in keeping with the analogy to quantum theory just given). Let $t\in \Z$ be chosen
so that $\vert\alpha/\pi_1^t\vert_\infty=\vert u^y\alpha/\pi^t_2\vert_\infty=1$. 
Thus we can write
$$\alpha=a\pi_1^t +\text{\{lower~terms\}}$$
and 
$$u^y\alpha=b\pi_2^t+\text{\{lower~terms\}}$$
where $a$ and $b$ are nonzero elements of $\Fq$; as $u$ is a principal unit, we
conclude $a=b$. In other words, the whole monomial of least degree is an invariant
of our action. We shall see shortly that there may be other invariant
monomials.

We now present some elementary calculations that appear to indicate
an action of $S_{(q)}$ on zeta-zeroes.

\begin{example}\label{firstq3} Let Let $A=\mathbb F_3 [T]$, $\pi = 1/T$
and $i=13=1+3+3^2$.  Let $H_{13}\subset S_{(3)}$ be the isotropy
subgroup of $13$.  Let $\sigma_\ast\in H_{13}$; note that
$\sigma_\ast$ {\em must} permute the first three digits of a $3$-adic
integer (otherwise $\sigma_\ast (13)\not= 13$); this is simple but is also
the key observation.  Hand calculations then give
\begin{equation}\label{secondq3}
\zeta_{A,\pi} (x,-13)=1-(\pi^4+\pi^{10}+\pi^{12})x^{-1}\,.
\end{equation}
Note that
\begin{eqnarray*}\label{thirdq3}
\sigma_\ast (\pi^{4}+\pi^{10}+\pi^{12}) &= &\pi^{\sigma_\ast (4)} +
\pi^{\sigma_\ast (10)} +\pi^{\sigma_\ast (12)}\\
&= &\pi^4 +\pi^{10} +\pi^{12}
\end{eqnarray*}
as $\sigma_\ast (4)=\sigma_\ast (1+3) = 1+3$, $1+9$, or $3+9$, etc., since the
first three digits of $13$ are permuted! Thus 
\begin{equation}\label{fourthq3}\sigma_\ast
(\pi^4+\pi^{10}+\pi^{12},-13)=(\pi^4+\pi^{10}+\pi^{12},-13)\,,
\end{equation}
and the zero is obviously fixed under the action arising from digit
permutations as given in Definition \ref{k4}.
\end{example}

\begin{example}\label{firstq2}
Let $A=\mathbb F_2[T]$, $\pi = 1/T$
and $i=3=1+2$ or $i=5=1+4=1+2^2$.  For $i=3$ one finds
\begin{equation}\label{secondq2}
\zeta_{A,\pi} (x,-3) = (1-\alpha x^{-1})(1-\beta x^{-1})
\end{equation}
where $\alpha =\pi^3$ (this is the trivial zero, which will arise for all
$i$ as $q-1=1$ in this case)
and $\beta = \pi
+\pi^2$.  As above, if $\sigma_\ast\in H_3=$ the isotropy subgroup of
$3$, then $\sigma_\ast (\alpha ) =\alpha$ and $\sigma_\ast (\beta)=\beta$ so 
that $\sigma_\ast$ fixes both $(\alpha ,-3)$ and $(\beta
,-3)$.  A similar calculation holds for $i=5$ where there is the trivial
zero $(\pi^5,-5)$ and another zero $(\pi+\pi^4,-5)$.  Note also that there
exists permutations $\rho$ such that $\rho_\ast (3)=5$.  One then finds
immediately that
\begin{equation}\label{thirdq2}
\rho_\ast (\pi^3,-3)=(\pi^5,-5)
\end{equation}
and
\begin{equation}\label{fourthq2}
\rho_\ast (\pi +\pi^2,-3)=(\pi +\pi^4,-5)\,.
\end{equation}
\end{example}

\begin{rem}\label{constraints}
The reader should note that having an action of $S_{(q)}$ on the zeroes
as in the above examples appears to put extremely strong constraints 
on the form that the zeroes may take. For instance, let $q=2$ and
$i=3$ as above. Note that if the second zero was $(\pi+\pi^4,-3)$ we
would run into great difficulty as $\pi^4$ has an infinite orbit
under $H_3$.\end{rem}

In the arithmetic of function fields, the truly canonical ideas do {\em not}
depend on the choice of positive parameter $\pi$. For instance,
let $q=2$ and $i=3$ as in Example \ref{firstq2}. The reader can easily
find examples of positive parameters $\pi$ so that when we transform
the zero $(\pi+\pi^2,-3)$ using Lemma \ref{depend1}, we obtain a zero
with expansion terms involving higher powers of $\pi$. As we just
remarked, this causes absolute havoc with our hoped for action of $S_{(2)}$.
{\em However}, the answer to this appears to lie in the use of our
``gauge group'' $U_1(K)$ as in our next example.

\begin{example}\label{gauge1}
Let $q=2$ and now put $\pi_0=1/T$.  As in Example \ref{firstq2}
we see that $\zeta_{A,1/T}(x,-3)=(1-\alpha x^{-1})(1-\beta x^{-1})$
with $\alpha = \pi^3_0$ and $\beta = \pi_0+\pi^2_0$.  Let $\pi$ be {\em any}
other uniformizer and let $\tilde{\alpha}$, $\tilde{\beta}$ be the
transformed zeroes under Lemma \ref{depend1}. 
It follows immediately that  $\widetilde{\pi}^3_0=\pi^3$ and
\begin{eqnarray*}\label{gauge2}
\widetilde{\pi_0+\pi^2_0} = (\pi/\pi_0)^3(\pi_0+\pi^2_0) &= &\pi^3
(\pi^{-1}_0 +\pi^{-2}_0)\\
&= &\pi^3(T+1/T^2)\,.
\end{eqnarray*}
Now $\pi_0=\pi +a_2\pi^2+a_3\pi^3+\cdots$ where $a_i\in \mathbb F_2$
and may be completely arbitrary (as $\pi$ is a completely arbitrary positive
uniformizer).  Using long division to compute
$T=\pi^{-1}_0$, we find that
\begin{equation}\label{gauge3}
\widetilde{\pi_0+\pi^2_0} = \pi +\pi^2 +0\cdot \pi^3 +O(\pi^4)\,.
\end{equation}
In other words, the terms ``$\pi +\pi^2 +0\cdot \pi^3$'' are invariant of the 
choice of $\pi$, {\em and} these are the ones that transform correctly under our action of $S_{(2)}$!\end{example}

One can readily find other such examples. Finally we present
a rather general result that gives further support for believing that
the zeroes of $\zeta_{\Fq[T]}(s)$ are acted upon by $S_{(q)}$ in line
with the above examples.

Let $q=p^{n_0}$ as above and let $y\in \mathbb Z_p$ be written as
\begin{equation}\label{collapse1}
y=\sum_{i=0}^N c_iq^{e_i}\,,
\end{equation}
where $0\neq c_i<q$ for all $i$, $N$ may be infinite, and $e_0<e_1<\cdots$.

\begin{defn}\label{collapse2}
We set
\begin{equation}\label{collapse3}
y_c:=\sum_{i=0}^Nc_iq^i\,,
\end{equation}
and call it the {\it $q$-adic collapse of $y$}.\end{defn}
\noindent
For instance the $2$-adic collapse of $5$ is $3$.

Let $j$ now be a positive integer with associated $q$-adic collapse
$j_c$. Let $\rho$ be a permutation of $\{0,1,\dots\}$ which has,
in the above notation, the
special values $\rho(i)=e_i$, for $i=0,\dots, N$. (Obviously,
as $j$ is a positive integer, $N<\infty$.)

By Corollary \ref{admissible1}, we see that $\zeta(x,-j_c)$ and 
$\zeta(x,-j)$ have the same degree $d$ in $x^{-1}$. From Sheats we know that
the zeroes lie in $K$ and that there is at most one zero of each polynomial
of a given absolute value.
Let $\{\alpha_1,\dots ,\alpha_d\}$ be the zeroes of $\zeta (x,-j_c)$,
and let $\{\beta_1,\dots ,\beta_d\}$ be the zeroes of $\zeta (x,-j)$,
where
$$
\ord_\pi \alpha_1 <\ord_\pi \alpha_2 <\cdots < 
\ord_\pi \alpha_d
$$
and
$$
\ord_\pi \beta_1 <\cdots <\ord_\pi \beta_d\,.
$$

\begin{prop}\label{collapse4}
Let $j$ be as above, but
where we assume that all $q$-adic digits of $j$ are $<p$, (for
instance, when $q=p$).  Then
\begin{equation}\label{collapse5}
\ord_\pi (\beta_i)=\rho_\ast (\ord_\pi \alpha_i)
\end{equation}
for $i=1,\dots ,d$.\end{prop}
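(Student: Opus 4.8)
The plan is to connect the valuations of the zeroes directly to the combinatorial data of the sums $S_e(i)$ via Newton polygons, and then show that the Newton polygon of $\zeta(x,-j)$ is obtained from that of $\zeta(x,-j_c)$ by applying $\rho$ to the lattice of break-points. Recall from Definition \ref{special} and Equation \ref{regroup1} that, for $A=\Fq[T]$, one has $z(x,-j)=\sum_e x^{-e} S_e(j)$ (up to the $\pi^{ej}$ normalization, which only rescales valuations uniformly and does not affect the argument). Since Sheats' work (Remark \ref{adm6}) tells us that all zeroes lie in $K=\Fq((1/T))$ and that the zeroes are determined by their absolute values --- equivalently, by $\ord_\pi$ --- knowing the Newton polygon of $z(x,-j)$ as a polynomial in $x^{-1}$ is \emph{equivalent} to knowing the multiset $\{\ord_\pi\beta_1,\dots,\ord_\pi\beta_d\}$, and each slope occurs with multiplicity exactly one. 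So the proposition reduces to a purely combinatorial claim: the vertices of the Newton polygon of $z(x,-j)$ are the images under the map $e\mapsto e$ on the $x^{-1}$-exponent axis and $v\mapsto\rho_\ast(v)$ on the valuation axis of the vertices of the Newton polygon of $z(x,-j_c)$.

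First I would set up the bookkeeping: for each $e$, write $d_e(j):=\ord_\pi S_e(j)$ (with $d_e(j)=+\infty$ when $S_e(j)=0$), so the Newton polygon of $z(x,-j)$ is the lower convex hull of $\{(e,d_e(j))\}$. By Corollary \ref{admpoint5} we already know $S_e(j)=0\iff S_e(j_c)=0$ (applied via the permutation $\rho$ carrying $j_c$ to $j$, using Proposition \ref{admissible}), so the \emph{support} of the two polygons agrees. The real content is the claim $d_e(j)=\rho_\ast\bigl(d_e(j_c)\bigr)$ whenever $S_e(j)\neq 0$. This is where the hypothesis that all $q$-adic digits of $j$ are $<p$ enters decisively: under this hypothesis, in any admissible representation $j=i_0+\dots+i_e$ there is (as in the proof of Proposition \ref{sheats1} and Carlitz's degree formula) a \emph{unique} optimal choice realizing the minimal valuation, and the degree/valuation of $S_e(j)$ can be read off explicitly as an expression in the $q$-adic digits of the $i_\ell$. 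Since $\rho_\ast$ is semi-additive on sums with no carryover (Part 2 of Proposition \ref{basicS}) and the admissible representation of $j$ relative to $e$ is exactly the $\rho$-image of that for $j_c$ (Proposition \ref{admissible}), applying $\rho_\ast$ to every term in the valuation formula gives $d_e(j)=\rho_\ast(d_e(j_c))$. Here I would lean on the explicit formula for $\ord_\pi S_e(i)$ that underlies Sheats' results --- essentially $\ord_\pi S_e(i)$ equals $e\cdot i$ minus the degree of $S_e(i)$, and Carlitz's degree formula expresses that degree as $\sum_\ell(\text{something in the digits of }i_\ell)$ --- and verify that each summand is $\rho_\ast$-equivariant.

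Having established $d_e(j)=\rho_\ast(d_e(j_c))$ pointwise on the support, the last step is to check that taking lower convex hulls commutes with this transformation. This is \emph{not} automatic for an arbitrary increasing reindexing of the vertical axis, but it works here because $\rho_\ast$, restricted to the finitely many valuations actually occurring (all of which are nonnegative integers with $q$-adic digit sets contained in the digit set relevant to $j_c$), is order-preserving \emph{on that finite set} and, crucially, is \emph{affine on the relevant arithmetic progressions} by semi-additivity: the break-point conditions of the Newton polygon are linear inequalities among the $d_e$, and one checks that $\rho_\ast$ preserves these inequalities because the quantities compared differ by integers whose $q$-adic expansions involve only the digit positions $\rho$ acts on without carry. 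Concretely, if three support points $(e_1,d_{e_1}),(e_2,d_{e_2}),(e_3,d_{e_3})$ with $e_1<e_2<e_3$ satisfy the convexity relation, one rewrites it as an equality/inequality of nonnegative integers to which semi-additivity of $\rho_\ast$ applies verbatim. Then the $i$-th slope of $z(x,-j)$ between consecutive vertices corresponds under $\rho_\ast$ to the $i$-th slope of $z(x,-j_c)$, and since each slope has multiplicity one (Sheats) the zeroes are matched up in order, giving exactly Equation \ref{collapse5}.

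The main obstacle, and the step deserving the most care, is the second one: proving $d_e(j)=\rho_\ast(d_e(j_c))$. It rests on having an \emph{exact} formula for $\ord_\pi S_e(i)$ (not just the lower bound from admissibility), and on the hypothesis ``all $q$-adic digits $<p$'' being precisely what guarantees the optimal admissible representation is essentially unique and that Carlitz's degree formula has no competing minima --- so that applying $\rho_\ast$ termwise is legitimate. I expect to need to reproduce or cite the relevant computation from Sheats \cite{sh1} (or Carlitz \cite{ca4}) giving $\deg S_e(i)$ in closed form, then observe termwise $\rho_\ast$-equivariance; the convex-hull compatibility in the third step, while requiring a short verification, is comparatively routine once semi-additivity is in hand. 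If the closed-form degree formula turns out to involve a genuine minimum over several representations even under the digit hypothesis, one would instead argue that $\rho_\ast$ commutes with that $\min$ because it is order-preserving on the finite relevant set --- but I believe the digit hypothesis rules this out and makes the equivariance exact.
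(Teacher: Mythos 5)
Your overall strategy --- reduce the claim to the explicit valuation formulas for $\zeta_{\mathbb F_q[T]}$ obtained by Carlitz, Diaz-Vargas and Sheats, and then read off $\rho_\ast$-equivariance from those formulas --- is exactly the paper's approach: the published proof is a one-line reference to the order computations in Diaz-Vargas \cite{dv1}. So you have correctly located where the proof must live. However, the intermediate claim on which your Steps 2 and 3 rest is false, and the gap would block the argument as written.

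Concretely, you assert that $d_e(j) = \rho_\ast\bigl(d_e(j_c)\bigr)$ for the vertex heights $d_e(j) := \ord_\pi \tilde S_e(j)$ of the Newton polygon, and then try to deduce slope-equivariance via convex-hull compatibility. But the vertex heights are \emph{partial sums} of the slopes, and $\rho_\ast$ is only semi-additive (Part 2 of Proposition \ref{basicS}): it distributes over sums only when there is no $q$-adic carryover. Consecutive slopes typically do carry. Take $q=p=2$, $j=5$, $j_c=3$, $\rho(0)=0$, $\rho(1)=2$. From Example \ref{firstq2}, the zero sets are $\{\pi+\pi^2,\ \pi^3\}$ for $j_c=3$ and $\{\pi+\pi^4,\ \pi^5\}$ for $j=5$, so the ordered slopes are $(1,3)$ and $(1,5)$, and indeed $\rho_\ast(1)=1$, $\rho_\ast(3)=5$ --- the proposition holds. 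But the vertex heights are $d_2(3)=1+3=4$ and $d_2(5)=1+5=6$ (one can check directly that $\tilde S_2(3)=\pi^4+\pi^5$ and $\tilde S_2(5)=\pi^6+\pi^9$), whereas $\rho_\ast(4)=2^{\rho(2)}\in\{2,8,16,\dots\}$ can never equal $6$, since any permutation with $\rho(0)=0$, $\rho(1)=2$ has $\rho(2)\neq 0,2$. The failure is exactly that $1+3=4$ involves a $2$-adic carry, so $\rho_\ast(1+3)\neq\rho_\ast(1)+\rho_\ast(3)$.

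What \emph{is} $\rho_\ast$-equivariant is each individual slope $\ord_\pi\beta_i$, not the cumulative heights $d_e$, and this is precisely the content of Diaz-Vargas's closed-form computation (in the $\mathbb F_p[T]$ setting, extended to the digits-less-than-$p$ case): each $\ord_\pi\beta_i$ is given by a sum of monomials $c_k q^{e_k}$ built directly from a subset of the $q$-adic digits of $j$ with their positions, so replacing $j$ by $\rho_\ast(j_c)$ literally permutes the exponents $e_k$ and yields $\rho_\ast(\ord_\pi\alpha_i)$. You should therefore bypass the Newton-polygon vertices entirely and argue at the level of slopes: cite (or reproduce) Diaz-Vargas's formula for the $i$-th slope and observe termwise $\rho_\ast$-equivariance there, rather than trying to push $\rho_\ast$ through the convex-hull construction. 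The digit hypothesis on $j$ is what makes Diaz-Vargas's formula apply (it is automatic when $q=p$, which is his setting); your intuition about why the hypothesis matters is sound, but it is needed to get the slope formula, not to rescue equivariance of the vertices, which is irreparably false.
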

\begin{proof}
This follows from the calculations of these orders given by Diaz-Vargas
in \cite{dv1}\end{proof}

Calculations seem to indicate that the result is true for arbitrary
$y\in \Zp$. We hope to return to these themes in future work.

\subsection{Evidence from $\zeta$-values at positive integers}\label{posev}
In this section we discuss the evidence from the positive  integers
and, in particular, the evidence arising from Carlitz's analog,
Theorem \ref{car1}, 
of the classical von Staudt-Clausen result (which computes
the denominators of Bernoulli numbers and is recalled in \ref{classbern} when
we return to classical number theory).

The work of David Hayes on ``sign-normalized'' rank one Drinfeld modules
(see \cite{hay1} or Chapter 7 of \cite{go1}) 
shows the existence of a special Drinfeld module $\psi$ with the
following properties: It is defined over the ring of integers in
a certain Hilbert Class Field of $k$ (ramified at $\infty$) lying in
$\C_\infty$ which we denote by $H^+$. Let $\mathfrak I$ be an ideal of
$A$. Then the product of all $\mathfrak I$-division
values of $\psi$ lies in $H^+$ and is an explicit generator of ${\mathcal O}^+
\mathfrak I$ where $\mathcal O^+$ is the ring of $A$-integers. The lattice $L$
associated to $\psi$ may be written $A\xi$ for a transcendental element
$\xi\in \mathbb C_\infty$. 

Let $T:=V\cdot H^+$ be the compositum of $V$ and $H^+$ where $V$ is defined
after Equation \ref{defI1}. The following
result is shown in \cite{go1} (Theorem 8.18.3).

\begin{theorem}\label{poszval}
Let $j$ be a positive integer divisible by $q_\infty-1$ and let $\zeta(j)$ be
defined as in {\rm Definition \ref{thevalues}}. Then
\begin{equation}\label{incomp}
0\neq \zeta(j)/\xi^j\in T\,.
\end{equation}
\end{theorem}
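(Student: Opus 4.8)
The plan is to break the series defining $\zeta(j)$ along the finite ideal class group of $A$, to recognize each piece as an Eisenstein series of a rank-one $A$-lattice, and to locate these Eisenstein series inside $H^+$ by invoking Hayes's theory of sign-normalized rank-one Drinfeld modules (Chapter 7 of \cite{go1}, \cite{hay1}); the nonvanishing $\zeta(j)\neq 0$ will be proved separately by a one-line $\infty$-adic estimate.

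First I would regroup the sum in Definition \ref{thevalues}. Fixing an integral representative $\mathfrak A_c$ for each class $c\in\mathcal I/\mathcal P$, the assignment $\alpha\mapsto\mathfrak A_c^{-1}(\alpha)$ identifies $(\mathfrak A_c\setminus\{0\})/\Fq^\ast$ with the integral ideals in the class $c^{-1}$, so letting $c$ range over the class group and $\alpha$ over $(\mathfrak A_c\setminus\{0\})/\Fq^\ast$ enumerates every nonzero ideal of $A$ exactly once. Multiplicativity of $\mathfrak I\mapsto\mathfrak I^{(-j)}$ together with the identity $(\alpha)^{(-j)}=\alpha^{-j}$ — which holds for every nonzero $\alpha\in k$ precisely because $(q_\infty-1)\mid j$ annihilates the sign $\zeta_\alpha\in\F_\infty^\ast$, while $\pi_\ast^{d_\infty}=\pi$ turns the degree factor into $\pi^{-j v_\infty(\alpha)}$ — would give, after the (nonarchimedeanly legitimate) rearrangement,
\[
\zeta(j)=\sum_c\mathfrak A_c^{(j)}\sum_{\alpha\in(\mathfrak A_c\setminus\{0\})/\Fq^\ast}\alpha^{-j}
=\frac1{q-1}\sum_c\mathfrak A_c^{(j)}\,G_j(\mathfrak A_c),
\]
where $G_j(L):=\sum_{0\neq\lambda\in L}\lambda^{-j}$ and the $q-1$ appears on passing from the $\Fq^\ast$-cosets to all $\alpha$ (again using $(q-1)\mid j$). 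By homogeneity $G_j(\mathfrak A_c)=\xi^{j}G_j(\mathfrak A_c\xi)$, so
\[
\frac{\zeta(j)}{\xi^{j}}=\frac1{q-1}\sum_c\mathfrak A_c^{(j)}\,G_j(\mathfrak A_c\xi).
\]
Since $\mathfrak A_c^{(j)}=(\mathfrak A_c^{(1)})^{j}$ lies in the value field $V$ (the discussion following Equation \ref{defI1}) and the outer sum is finite, everything reduces to showing $G_j(\mathfrak A_c\xi)\in H^+$ for each $c$.

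For that step I would proceed as follows. If $\phi$ is the rank-one Drinfeld module with lattice $L$ and exponential $e_\phi(z)=z+\sum_{i\ge1}e_iz^{q^i}$, then $e_\phi'(z)\equiv1$ and $e_\phi(z)=z\prod_{0\neq\lambda\in L}(1-z/\lambda)$ yield $1/e_\phi(z)=z^{-1}-\sum_{m\ge1}G_m(L)z^{m-1}$; multiplying through by $e_\phi(z)$ and comparing coefficients shows each $G_m(L)$ is a universal polynomial with prime-field coefficients in $e_1,e_2,\dots$. Comparing coefficients in $e_\phi(az)=\phi_a(e_\phi(z))$ for a fixed non-constant $a\in A$ shows $e_i\cdot(a^{q^i}-a)$ is a polynomial in the coefficients of $\phi_a$ and in $e_1,\dots,e_{i-1}$, and as $a^{q^i}-a\in A$ is nonzero this forces $e_i(\phi)\in F$ whenever $\phi$ is defined over a field $F$ (the mechanism behind Proposition \ref{easy1}). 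Now Hayes's theory tells us that our $\psi$, with lattice $A\xi$, is sign-normalized and defined over $\mathcal O^+\subset H^+$; that $H^+/k$ is Galois with every conjugate $\psi^{\sigma}$ again defined over $\mathcal O^+$; and that these conjugates, up to isomorphism over $\C_\infty$ (i.e. up to scaling of lattices), represent all ideal classes. Hence for each $c$ one can write $\mathfrak A_c\xi=\mu_c\cdot L_{\psi^{\sigma}}$ with $\mu_c$ a product of an element of $H^+$ and an element of $\F_\infty^\ast$; then $G_j(\mathfrak A_c\xi)=\mu_c^{-j}G_j(L_{\psi^{\sigma}})$, the $\F_\infty^\ast$-part of $\mu_c^{-j}$ is $1$ because $(q_\infty-1)\mid j$, and $G_j(L_{\psi^{\sigma}})\in H^+$ by the two preceding sentences. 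Thus $G_j(\mathfrak A_c\xi)\in H^+$, every term of the displayed sum lies in $V\cdot H^+=T$, and $\zeta(j)/\xi^{j}\in T$.

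Finally, nonvanishing uses none of the above. With $s_{-j}=(\pi_\ast^{j},-j)$ one has $\mathfrak I^{(-j)}=(\pi_\ast^{j})^{\deg_k\mathfrak I}\langle\mathfrak I\rangle^{-j}$, and since $\langle\mathfrak I\rangle$ is a $1$-unit, $|\mathfrak I^{(-j)}|_\infty=|\pi_\ast|_\infty^{\,j\deg_k\mathfrak I}$. The ideal $\mathfrak I=A$ contributes the term $1$; every other nonzero integral ideal has $\deg_k\mathfrak I\ge1$ and contributes a term of $\infty$-adic absolute value $\le|\pi_\ast|_\infty^{\,j}<1$, so the ultrametric inequality gives $|\zeta(j)|_\infty=1$, hence $\zeta(j)\neq0$. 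The hard part will be the third paragraph: extracting from Hayes's construction the exact scalar $\mu_c$ relating the naive lattice $\mathfrak A_c\xi$ to the canonical sign-normalized conjugate, and verifying that its transcendental part is already in $H^+$ while only its constant part — killed by the $j$-th power since $(q_\infty-1)\mid j$ — could lie outside $H^+$; the class-group bookkeeping, the sign cancellations, the integrality of the $G_m$-in-$e_i$ identities, and the $\infty$-adic nonvanishing are all routine.
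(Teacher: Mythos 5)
Your proof is correct and follows essentially the same route as Theorem 8.18.3 of \cite{go1}, which the paper cites for this result: decomposition of the Dirichlet series by ideal class into Eisenstein series $G_j$ of rank-one lattices, reduction to integrality of the exponential coefficients over $H^+$ via Hayes's sign-normalized rank-one Drinfeld modules, and nonvanishing by the one-line ultrametric estimate. The scalar $\mu_c$ you flag as the hard part is supplied directly by Hayes's $*$-action on sign-normalized modules (the lattice of $\mathfrak{c}*\psi$ is $D(\mathfrak{c})\mathfrak{c}^{-1}A\xi$ with $D(\mathfrak{c})\in H^+$, see Chapter 7 of \cite{go1} or \cite{hay1}), so that step is already in the cited literature rather than a genuine gap.
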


Theorem \ref{poszval} was established in the basic $\Fq[T]$-case by
L. Carlitz in the 1930's; in this case, both $V$ and $H^+$ equal $k$. 

Let ${\mathcal O}_T$ be the $A$-integers of $T$. 
\begin{defn}\label{genbcnums}
Let $j$ be divisible by $q_\infty-1$. We define $\widetilde{BC}_j$ 
to be the ${\mathcal O}_T$  fractional
ideal generated by $\tilde{\Pi}_j\zeta(j)/\xi^j$.
\end{defn} 
We call the fractional ideal $\widetilde{BC}_j$ the ``$j$-th 
Bernoulli-Carlitz fractional ideal'' as, again, these were originally
defined (as elements in $\Fq(T)$) by Carlitz in the 1930's and are clearly
analogous to the classical Bernoulli numbers (Definition \ref{bernou}).

\begin{defn}\label{Adenom}
Let ${\mathfrak d}_j:=\{a\in A\mid a\widetilde{BC}_j\subseteq {\mathcal O}_T\}$.
\end{defn}
We call ${\mathfrak d}_j$ the ``$A$-denominator of $\widetilde{BC}_j$;'' it is
obviously an ideal of $A$.

For $A=\Fq[T]$, Carlitz (\cite{ca1}, \cite{ca2}, \cite{ca3}) 
gives an explicit calculation of
${\mathfrak d}_j$ which we now recall. Let $q=p^{n_0}>2$ for the moment.
\begin{theorem}\label{car1}
{\rm (Carlitz)} There are two conditions on $j$:\\
{\rm 1}. $h:=\ell_p(j)/(p-1)n_0$ is integral.\\
{\rm 2}. $q^h-1$ divides $j$.\\
If $j$ satisfies both conditions, then ${\mathfrak d}_j$ is the product
of all prime ideals of degree $h$. If $j$ does not satisfy both conditions,
then ${\mathfrak d}_j=(1)$.\end{theorem}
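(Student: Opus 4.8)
\textbf{Proof plan for Theorem \ref{car1}.}
The plan is to reduce the computation of $\mathfrak d_j$ to the local/archimedean information already assembled in the excerpt: namely the factorization of $\tilde\Pi_j$ from Proposition \ref{sinnott} and the integrality statement $0\neq\zeta(j)/\xi^j\in T$ from Theorem \ref{poszval}. By Definition \ref{genbcnums}, $\widetilde{BC}_j$ is the $\mathcal O_T$-ideal generated by $\tilde\Pi_j\cdot\zeta(j)/\xi^j$; in the case $A=\Fq[T]$ the fields $V$ and $H^+$ both equal $k$, so $T=k$ and $\mathcal O_T=A=\Fq[T]$, and $\widetilde{BC}_j$ is just the principal fractional $A$-ideal generated by the Carlitz--Bernoulli element $B_j:=\Pi_j\,\zeta(j)/\xi^j\in k$. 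Thus $\mathfrak d_j$ is simply the denominator ideal of the rational function $B_j$, i.e. $\mathfrak d_j=\prod_{\mathfrak P}\mathfrak P^{\max(0,-v_{\mathfrak P}(B_j))}$, and everything comes down to evaluating $v_{\mathfrak P}(\zeta(j))$ for each prime $\mathfrak P$ of $A$ and comparing it with $v_{\mathfrak P}(\tilde\Pi_j)$ from Proposition \ref{sinnott}.

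First I would treat the prime $\infty$, which governs whether a genuine pole can appear: since $\zeta(j)=\sum_{\mathfrak I}\mathfrak I^{(-j)}$ is a convergent sum of elements of absolute value $\le 1$ at $\infty$ and the leading term is $1$ (the ideal $A$ itself), we have $v_\infty(\zeta(j))=0$, while $\xi$ is transcendental; the factor $\xi^{-j}$ and the $\Pi_j$ are handled purely formally, so no information is lost at $\infty$. Next, for a finite prime $\mathfrak P=(f)$ of degree $d$, I would compute $v_{\mathfrak P}(\zeta(j))$ by grouping the sum $\zeta(j)=\sum_{\mathfrak I}\mathfrak I^{(-j)}$ according to residues modulo powers of $\mathfrak P$: writing $\zeta(j)=\sum_e S_e$, where $S_e=\sum_{\deg\mathfrak I=e}(\text{generator})^{-j}$ runs over monic polynomials of degree $e$, one separates the contribution of polynomials prime to $f$ from those divisible by $f$; an Euler-type manipulation using $\sum_{a\bmod f}a^{-j}$ and the standard evaluation of power sums over residue classes modulo $f$ gives $v_{\mathfrak P}(\zeta(j))$ in terms of whether $q^h-1\mid j$ where $h$ is the relevant multiplicative order. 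Comparing this with $v_{\mathfrak P}(\tilde\Pi_j)=\sum_{e\ge1}[j/q^{ed}]$ from Proposition \ref{sinnott}, one finds that a pole survives in $B_j=\Pi_j\zeta(j)/\xi^j$ precisely at primes of degree $h$, and only when both of Carlitz's conditions hold — condition 1 ($h=\ell_p(j)/(p-1)n_0$ integral) controls the $p$-adic size of $\tilde\Pi_j$ via the digit-sum formula, and condition 2 ($q^h-1\mid j$) controls the vanishing behavior of $\zeta(j)$ modulo $\mathfrak P$.

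I expect the main obstacle to be the second step: pinning down $v_{\mathfrak P}(\zeta(j))$ exactly, rather than just a bound. One knows $\zeta(-j)$-type values are algebraic integers, but here we need the finer $\mathfrak P$-adic valuation of a \emph{positive}-integer special value $\zeta(j)$, which requires a genuine Euler-factor computation: isolating $\prod_{\mathfrak P}(1-\mathfrak P^{-j})^{-1}$-style factors and analyzing the local factor at $\mathfrak P$ modulo successively higher powers of $f$. The delicate point is the interaction between the digit condition (which is what makes $\ell_p(j)/((p-1)n_0)$ an integer, so that $\tilde\Pi_j$ acquires exactly enough of a zero at degree-$h$ primes) and the order condition (which is exactly the condition for the degree-$h$ primes to divide the relevant power sum). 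Carlitz's original papers \cite{ca1}, \cite{ca2}, \cite{ca3} carry this out; I would follow that line, using Proposition \ref{sinnott} to package the $\tilde\Pi_j$ side cleanly and reducing the remaining work to the evaluation of the characteristic-$p$ power sums $S_e(-j)$ restricted to residue classes mod $f$. The case $q=2$ is excluded here only because $q_\infty-1=1$ makes every positive integer a ``trivial-zero'' index and the statement of the divisibility conditions degenerates; the argument is otherwise uniform.
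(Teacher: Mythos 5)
The paper itself does not prove Theorem~\ref{car1}; it records Carlitz's result and cites \cite{ca1}, \cite{ca2}, \cite{ca3}, so there is no in-text argument to match against, and the burden falls entirely on your outline. Your outline has a genuine obstruction at its central step. You propose to compute $v_{\mathfrak P}(\zeta(j))$ by ``grouping the sum $\zeta(j)=\sum_{\mathfrak I}\mathfrak I^{(-j)}$ according to residues modulo powers of $\mathfrak P$'' and doing an Euler-type manipulation modulo $f$. For positive $j$ the series $\sum_f f^{-j}$ over monic $f\in A$ converges only in $k_\infty$: the terms with $\mathfrak P\nmid f$ are $\mathfrak P$-adic units, and the terms with $\mathfrak P\mid f$ have negative $\mathfrak P$-adic valuation, so neither the series nor its $\mathfrak P$-deleted version converges $\mathfrak P$-adically, and no amount of grouping lets you read $v_{\mathfrak P}(\zeta(j))$ off the $\infty$-adic expansion. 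Rationality of $\zeta(j)/\xi^j$ (Theorem~\ref{poszval}, which is a rationality statement, not an ``integrality'' one as you call it) is what makes $v_{\mathfrak P}$ well defined, but it gives no computational handle on the series. The route Carlitz actually took, and the one that succeeds, bypasses the Dirichlet series entirely: define the $BC_m$ by inverting the Carlitz exponential, $z/e_C(z)=\sum_m (BC_m/\Pi_m)z^m$ with $e_C(z)=\sum_i z^{q^i}/D_i(T)$, and track denominators modulo $\mathfrak P$ using the explicit factorizations $D_i=[i][i-1]^q\cdots[1]^{q^{i-1}}$, $[i]=T^{q^i}-T$ from Definition~\ref{fact1} and Proposition~\ref{easy}; Proposition~\ref{sinnott} then enters exactly as you use it on the $\tilde\Pi_j$ side of the ledger, and the two conditions (integrality of $\ell_p(j)/((p-1)n_0)$, and $q^h-1\mid j$) fall out of that bookkeeping rather than from any Euler-factor analysis.

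A smaller point: your explanation of why $q=2$ is set aside is off. It is not that ``$q_\infty-1=1$ makes every positive integer a trivial-zero index and the statement degenerates'' (that remark concerns the negative-integer zeroes, not positive-integer values). As Theorem~\ref{car2} shows, the $q=2$ case genuinely departs from the pattern: at $h=2$ the denominator is not the full product of degree-$2$ primes, and an additional family $j=2^\alpha+1$ with $\mathfrak d_j=(T^2+T)$ appears even when Carlitz's system is inconsistent. That is a substantive exception, not a formal degeneration.
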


Carlitz's result gives us the first (historically) 
indication of an action of $S_{(q)}$
on $\zeta$-values. This is given in the following corollary.

\begin{cor}\label{invar1}
Let $\mathfrak P$ be a prime ideal of $A$ of degree $h$ with additive
valuation $v_\mathfrak P$. Then
$v_{\mathfrak P}({\mathfrak d}_j)$ is an invariant of the
action of $S_{(q^h)}$ on the positive integers divisible by $q-1$,
\end{cor}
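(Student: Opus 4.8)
The statement to be proved is Corollary \ref{invar1}: for a prime $\mathfrak P$ of $A = \Fq[T]$ of degree $h$, the valuation $v_\mathfrak P(\mathfrak d_j)$ is an invariant of the action of $S_{(q^h)}$ on the positive integers divisible by $q-1$. The plan is to read off $v_\mathfrak P(\mathfrak d_j)$ directly from Carlitz's Theorem \ref{car1}, express it purely in terms of quantities that are manifestly $S_{(q^h)}$-invariant, and then invoke the relevant parts of Proposition \ref{basicS}. So first I would unwind Theorem \ref{car1}: since $\mathfrak d_j$ is, when conditions (1) and (2) hold, the product of all primes of degree $h$ (each appearing to the first power) and is $(1)$ otherwise, we get $v_\mathfrak P(\mathfrak d_j) = 1$ precisely when $\mathfrak P$ has degree $h$ and $j$ satisfies both conditions, and $v_\mathfrak P(\mathfrak d_j) = 0$ otherwise. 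Thus it suffices to show that the truth value of ``$h := \ell_p(j)/((p-1)n_0)$ is an integer \emph{and} $q^h - 1 \mid j$'' is unchanged when $j$ is replaced by $\sigma_\ast(j)$ for $\sigma_\ast \in S_{(q^h)}$.

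\textbf{Key steps.} For condition (1), the point is that $S_{(q^h)}$ permutes the base-$q^h$ digits of $j$, but we need the invariance of $\ell_p(j)$, the sum of base-$p$ digits. Here I would use the observation (made in the paper after Remark \ref{symm}, via $S_{(q_1)} \hookrightarrow S_{(q_0)}$ when $q_0 \mid q_1$) that $S_{(q^h)} \subseteq S_{(p)}$, together with Part 5 of Proposition \ref{basicS} applied with $q$ replaced by $p$: an element of $S_{(p)}$ preserves $\ell_p$ on nonnegative integers. Since $\sigma_\ast \in S_{(q^h)} \subseteq S_{(p)}$, we get $\ell_p(\sigma_\ast(j)) = \ell_p(j)$, so $h$ is unchanged; in particular condition (1) holds for $j$ iff it holds for $\sigma_\ast(j)$. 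For condition (2), I would apply Part 6 of Proposition \ref{basicS} with $q$ replaced by $q^h$: for $\sigma_\ast \in S_{(q^h)}$ and any integer $n$, $n \equiv \sigma_\ast(n) \pmod{q^h - 1}$. Hence $q^h - 1 \mid j \iff q^h - 1 \mid \sigma_\ast(j)$, so condition (2) is also preserved. (One also checks the standing hypothesis ``divisible by $q-1$'' is preserved, again by Part 6 of Proposition \ref{basicS}, this time with $q$ in its original role, since $S_{(q^h)} \subseteq S_{(q)}$ — though this is not strictly needed for the valuation formula.) Combining, the conjunction of (1) and (2) has the same truth value for $j$ and for $\sigma_\ast(j)$, so $v_\mathfrak P(\mathfrak d_j) = v_\mathfrak P(\mathfrak d_{\sigma_\ast(j)})$.

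\textbf{Main obstacle.} There is no serious obstacle; the only subtlety is keeping straight \emph{which} power of $p$ is the base at each stage — base $p$ for the $\ell_p$ computation in condition (1) and base $q^h$ for the congruence in condition (2) — and making sure the containments $S_{(q^h)} \subseteq S_{(p)}$ and $S_{(q^h)} \subseteq S_{(q)}$ are invoked correctly so that Parts 5 and 6 of Proposition \ref{basicS} apply in the right form. One should also address the case $q = 2$ (excluded by ``$q = p^{n_0} > 2$'' in Theorem \ref{car1}), where $\mathfrak d_j$ can be described separately or where the corollary is interpreted in that restricted range; the cleanest route is simply to state the corollary under the same hypothesis $q > 2$ as the theorem it rests on. Everything else is a direct quotation of Theorem \ref{car1} and Proposition \ref{basicS}.
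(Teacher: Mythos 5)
Your proof is correct, and since the paper states Corollary \ref{invar1} without an explicit proof (presenting it as an immediate consequence of Theorem \ref{car1}), your argument is exactly the intended one: unwind Carlitz's description of $\mathfrak d_j$, note that $\ell_p(j)$ (hence condition (1)) is preserved by $S_{(q^h)}\subseteq S_{(p)}$ via Part 5 of Proposition \ref{basicS}, and that condition (2) is preserved via Part 6 applied to $q^h$. Your side remarks on the preservation of ``divisible by $q-1$'' and on the $q>2$ hypothesis are also well taken.
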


What about $q=2$. Here (\cite{ca3}) the same result holds if $h\neq 2$. More
precisely Carlitz established the following result.
\begin{theorem}\label{car2}
{\rm (Carlitz)} Let $q=2$ and consider the system given in {\rm Theorem
\ref{car1}}. If this system is consistent for $h=\ell_2(j)\neq 2$,  
then ${\mathfrak d}_j$ is the product of all prime ideals of degree
$h$. If it is consistent for $h=2$, then for $j$ even we
have 
\begin{equation}\label{q2}
{\mathfrak d}_j=(T^2+T+1)\,,
\end{equation}
while, for $j$ odd, we have
\begin{equation}\label{q21}
{\mathfrak d}_j=(T^2+T\cdot T^2+T+1)\,.
\end{equation}
If the system is inconsistent and $j$ is of the form $2^\alpha+1$
(so $\ell_2(j)=2$), then
\begin{equation}\label{q22}
{\mathfrak d}_j=(T^2+T)\,.
\end{equation}
If it is inconsistent and $j$ cannot be written as $2^\alpha+1$, then
${\mathfrak d}_j=(1)$.\end{theorem}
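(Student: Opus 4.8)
The plan is to convert the statement into a prime-by-prime valuation computation. Since we are in the case $A=\Fq[T]$ with $q=2$, we have $V=H^+=k=\Fq(T)$ and $\mathcal O_T=A=\F_2[T]$, so by Definition \ref{genbcnums} and the fact that $\tilde\Pi_j=(\Pi_j(T))$ is principal, $\widetilde{BC}_j$ is the principal fractional ideal generated by the single element $BC_j:=\Pi_j(T)\,\zeta(j)/\xi^j\in k$ (the Bernoulli--Carlitz number; here $\xi$ is the Carlitz period). Then ${\mathfrak d}_j=\prod_\mathfrak{P}\mathfrak{P}^{\max(0,-v_\mathfrak{P}(BC_j))}$, and the theorem is equivalent to the assertions that $v_\mathfrak{P}(BC_j)=-1$ precisely at the primes of degree $h=\ell_2(j)$ when $2^h-1\mid j$ and $h\neq2$, that $v_\mathfrak{P}(BC_j)\geq0$ otherwise, together with the four stated exceptional formulas when $h=2$.

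First I would set up the recursion. Following the analog of Definition \ref{bernou}, the $BC_j$ are defined by $z/e_C(z)=\sum_{j\geq0}(BC_j/\Pi_j)z^j$, where $e_C(z)=\sum_{i\geq0}z^{2^i}/D_i$ is the Carlitz exponential (its coefficients are $1/D_i$ in the notation of Proposition \ref{easy1}). Comparing coefficients of $z^n$ in $(\sum_j (BC_j/\Pi_j)z^j)(\sum_i z^{2^i-1}/D_i)=1$ expresses $BC_j/\Pi_j$ as an explicit $\Z$-polynomial in the $1/D_i$ with $i$ of size at most $\log_2 j$. Since $D_i=[i]\,[i-1]^2\cdots[1]^{2^{i-1}}$ and, by Proposition \ref{easy}, $[k]$ is the product of the monic primes whose degree divides $k$, only primes $\mathfrak{P}$ of degree $d\lesssim\log_2 j$ can divide a denominator in $BC_j$.

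Next I would cancel the bulk of the denominator. For a prime $\mathfrak{P}$ of degree $d$, Proposition \ref{sinnott} gives $v_\mathfrak{P}(\Pi_j(T))=\sum_{e\geq1}[j/2^{ed}]$. Substituting the recursion into $BC_j=\Pi_j(T)\cdot(BC_j/\Pi_j)$ and matching against the $\mathfrak{P}$-adic order contributed by the $D_i$, one checks that every potential pole cancels unless (a) $2^d-1\mid j$ — this is what forces the relevant power sum over $(A/\mathfrak{P})^\times$ to be nonzero rather than zero, the only way a pole survives — and (b) the number of $\mathfrak{P}$-factors demanded by the leading term of the recursion, governed by $\ell_2(j)$, is exactly absorbed by $d$. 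Making this precise isolates the two conditions of Theorem \ref{car1}: the only candidate degree is $h=\ell_2(j)$, consistency requires $2^h-1\mid j$, and when both hold the surviving pole at each such $\mathfrak{P}$ has order exactly $1$. When $h\neq2$ this leading term contributes $-\sum_{\deg\mathfrak{P}=h}1/P_\mathfrak{P}\pmod A$ — the characteristic-$2$ analogue of the Clausen--von Staudt congruence $B_n\equiv-\sum_{(p-1)\mid n}1/p$ — and since $\F_2[T]$ has exactly one prime of each degree $\geq2$ one gets ${\mathfrak d}_j=\prod_{\deg\mathfrak{P}=h}\mathfrak{P}$.

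The case $h=2$ is the main obstacle, and I do not expect a conceptual shortcut past it: $2^h-1=3$ interacts with the degenerate fact $q-1=1$ (so condition (2) in Carlitz's admissibility criterion imposes no divisibility constraint on the $i_{j'}$), and the clean dichotomy ``each potential pole either vanishes or is simple'' breaks down. Extra low-degree terms survive, involving not only the degree-$2$ prime $T^2+T+1$ but also the degree-$1$ primes $T$ and $T+1$. Here I would finish by a finite explicit computation: evaluate the first few terms of the recursion modulo small powers of $T^2+T+1$ and of $T(T+1)$ — or, equivalently, use the degree formula for the power sums $S_e(j)$ recalled in Subsection \ref{specialev} and Remark \ref{admissible5} — tracking the $\mathfrak{P}$-adic orders of the $\Pi_j$-factor and of $\zeta(j)/\xi^j$ simultaneously at both primes, and thereby separate the four sub-cases according to the parity of $j$ and to whether $j$ has the form $2^\alpha+1$. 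This bookkeeping is exactly what forced Carlitz to treat $q=2$ by hand, and I would expect to reproduce it essentially as in \cite{ca3}.
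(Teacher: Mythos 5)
The paper does not prove Theorem \ref{car2}; it states the result and attributes it to Carlitz \cite{ca3} (with \cite{ca1}, \cite{ca2} for $q>2$). So there is no ``paper's own proof'' to compare against. Judged on its own terms, your outline --- pass to the Bernoulli--Carlitz number $BC_j=\Pi_j(T)\,\zeta(j)/\xi^j$, use the generating-function recursion $\bigl(\sum_j (BC_j/\Pi_j)z^j\bigr)\bigl(\sum_i z^{q^i-1}/D_i\bigr)=1$, bound candidate denominator primes via the factorization of the $D_i$, cancel against the valuation of $\Pi_j$ from Proposition \ref{sinnott}, and read off the surviving pole from a von Staudt--Clausen-type congruence --- is indeed the Carlitz route, and it is the correct skeleton.

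However, as written this is a sketch with the load-bearing step asserted rather than proved. The entire content of the theorem is in your sentence ``one checks that every potential pole cancels unless (a) $2^d-1\mid j$ \dots and (b) \dots, and when both hold the surviving pole at each such $\mathfrak P$ has order exactly $1$.'' Nothing in the proposal actually establishes this: the recursion a priori leaves a large collection of products of the $1/D_i$ to estimate, each prime of degree $d\leq\log_2 j$ is a candidate, and the claim that the only surviving degree is exactly $h=\ell_2(j)$ (rather than merely $\leq\log_2 j$) is precisely the hard combinatorial step that occupies most of Carlitz's paper. Deferring the $h=2$ case entirely to ``finite explicit computation'' compounds this, since those are the cases the theorem is really about. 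You also have a factual error: $\F_2[T]$ certainly does \emph{not} have exactly one monic irreducible of each degree $\geq2$ --- there are two of degree $3$, three of degree $4$, and so on. Fortunately that remark is inessential: the denominator of $\sum_{\deg\mathfrak P=h}1/\mathfrak P$ is $\prod_{\deg\mathfrak P=h}\mathfrak P$ regardless of how many primes there are, since reducing the sum modulo any one $\mathfrak P$ of degree $h$ leaves a product of the others, which is a nonzero residue. But the remark should be corrected, and, more importantly, the cancellation argument needs to be carried out --- either by reproducing Carlitz's digit-by-digit bookkeeping or by invoking a modern treatment such as the one in Chapter 9 of \cite{go1} --- before this can stand as a proof rather than an outline.
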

\begin{cor}\label{car3} ($q=2$)
Let $\mathfrak P$ be a prime of $A$ of degree $h$ and suppose that
first that $h\neq 1$. Then ${v}_{\mathfrak P}(\mathfrak d_j)$
is an invariant of the action of $S_{(2^h)}$ on the positive integers.
If $h=1$ then ${v}_{\mathfrak P}(\mathfrak d_j)$ is an invariant
of the subgroup $\tilde{S}_{(4)}$ of $S_{(4)}$ arising from permutations 
of $\{0,1,2,\ldots\}$ fixing $0$.
\end{cor}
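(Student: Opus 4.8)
The plan is to reduce everything to Carlitz's explicit description of $\mathfrak d_j$ in Theorem \ref{car2} and then read off the invariance directly from the digit–combinatorial properties of $S_{(q)}$ in Proposition \ref{basicS}. Fix a prime $\mathfrak P$ of $A=\Fq[T]$, with $q=2$, of degree $h$, and regard $j\mapsto v_{\mathfrak P}(\mathfrak d_j)$ as a function on the positive integers. Reading Theorem \ref{car2}, this function depends on $j$ only through: the digit sum $\ell_2(j)$; whether $2^{\ell_2(j)}-1$ divides $j$ (``consistency'' of the system of Theorem \ref{car1}); in the borderline case $\ell_2(j)=2$, the parity of $j$; and, in the inconsistent case, whether $j$ has the special shape $2^\alpha+1$. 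So the first step is to tabulate $v_{\mathfrak P}(\mathfrak d_j)$: for $\deg\mathfrak P=h\ge 3$ one gets $v_{\mathfrak P}(\mathfrak d_j)=1$ precisely when $\ell_2(j)=h$ and $2^{h}-1\mid j$, and $0$ otherwise; for $h=2$, where necessarily $\mathfrak P=(T^2+T+1)$, one gets $1$ precisely when $\ell_2(j)=2$, $3\mid j$ and $j$ is even, and $0$ otherwise; and for $h=1$, where $\mathfrak P=(T)$ or $(T+1)$, one gets one of a short finite list of values, according to whether $j$ is a power of $2$ or of the form $2^\alpha+1$ with $\alpha$ of a prescribed parity (the two degree-one primes differing only in the values attached, not in the families of $j$). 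One should also note here that in this table no prime of degree other than $h$ occurs in $\mathfrak d_j$ except through the two exceptional rows $\ell_2(j)=2$ and $j=2^\alpha+1$; this is what keeps the case analysis finite.

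With the table in hand, the second step is to check that each condition appearing in it is stable under the relevant group. For $h\ge 3$ this is immediate and comes down to two observations. First, an element $\rho_\ast\in S_{(2^h)}\subseteq S_{(2)}$ permutes the $2^h$-adic digits of $j$, which are consecutive blocks of $h$ binary digits, so $\ell_2(j)=\ell_2(\rho_\ast j)$. Second, Part 6 of Proposition \ref{basicS}, applied with $q$ replaced by $2^h$, gives $j\equiv\rho_\ast(j)\pmod{2^h-1}$, so the condition $2^h-1\mid j$ is preserved too; hence $v_{\mathfrak P}(\mathfrak d_j)=v_{\mathfrak P}(\mathfrak d_{\rho_\ast j})$, settling $h\ge 3$. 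The same two facts, now with $q$ replaced by $4$, dispose of the ``$\ell_2(j)=2$ and $3\mid j$'' portion of the $h=2$ analysis.

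The genuinely delicate piece, and what I expect to be the main obstacle, is the behaviour at the primes of small degree, $h\le 2$, where $\ell_2(j)$ together with the class of $j$ modulo $2^h-1$ no longer pins down $\mathfrak d_j$: one must also control the parity of $j$ (for $h=2$), and, for $h=1$, whether $j$ is $2^\alpha$ or $2^\alpha+1$ and, in the latter case, the parity of $\alpha$. The parity of $j$ is the residue modulo $2$ of its bottom binary digit, and this is preserved exactly by those $\rho_\ast$ coming from a permutation $\rho$ of $\{0,1,2,\dots\}$ with $\rho(0)=0$, i.e.\ the subgroup $\tilde{S}_{(4)}$ of the statement. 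So the third step is to verify that such a $\rho_\ast$: (i) sends powers of $2$ to powers of $2$ and integers $2^\alpha+1$ ($\alpha\ge 1$) to integers of the same form --- for $\alpha\ge 2$ because it permutes the $4$-adic digit positions of $j$ while keeping the bottom block (which carries the $2^0$) fixed, and for $\alpha=1$ because it fixes $3$ outright since $\rho(0)=0$; (ii) preserves the parity of $\alpha$, since in that notation $\rho_\ast(2^\alpha+1)=2^{\alpha'}+1$ with $\alpha'=(\alpha\bmod 2)+2\,\rho(\lfloor\alpha/2\rfloor)\equiv\alpha\pmod 2$; and (iii) is a bijection of $\Zp$, so the remaining ``generic'' integers, on which $v_{\mathfrak P}(\mathfrak d_j)=0$, are permuted among themselves. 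Assembling (i)--(iii) yields the $\tilde{S}_{(4)}$-invariance for $h=1$; and, as the table for $h=2$ shows, the same parity bookkeeping is exactly what is needed there, so in that case too some care is warranted as to which permutations are admissible, namely those fixing $0$.
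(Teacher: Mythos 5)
The blueprint (reduce to Theorem \ref{car2}, tabulate when a given $\mathfrak P$ divides $\mathfrak d_j$, then use Parts 5 and 6 of Proposition \ref{basicS} for the relevant $S_{(2^h)}$ or $\tilde S_{(4)}$) is the right one, and your handling of $h\ge3$ and of the $\tilde S_{(4)}$-stability of $\{2^\alpha\}\cup\{2^\alpha+1\}$ for $h=1$ is sound. But your table for $h=2$ is wrong, and the error propagates into a final conclusion that contradicts the statement you are proving.

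For $\mathfrak P$ of degree $2$, i.e.\ $\mathfrak P=(T^2+T+1)$, Theorem \ref{car2} gives $v_\mathfrak P(\mathfrak d_j)=1$ whenever the system is consistent for $h=\ell_2(j)=2$ (i.e.\ $\ell_2(j)=2$ and $3\mid j$), \emph{irrespective of the parity of $j$}: in the even case $\mathfrak d_j=(T^2+T+1)$ and in the odd case $\mathfrak d_j$ is the product of $(T^2+T)$ and $(T^2+T+1)$ -- the degree-2 prime appears with multiplicity one in both. The parity of $j$ only controls whether the degree-\emph{one} primes $(T),(T+1)$ enter $\mathfrak d_j$. You wrote that $v_\mathfrak P(\mathfrak d_j)=1$ requires ``$\ell_2(j)=2$, $3\mid j$ and $j$ is even'', and then concluded that ``in that case too some care is warranted as to which permutations are admissible, namely those fixing $0$.'' That last sentence asserts that only $\tilde S_{(4)}$ works for $h=2$, which is exactly the opposite of what the corollary claims ($h\ne 1$ gives full $S_{(2^h)}$-invariance). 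With the corrected table, $v_\mathfrak P(\mathfrak d_j)$ for $\deg\mathfrak P=2$ is a function of $\ell_2(j)$ and $j\bmod 3$ alone, both of which are preserved by all of $S_{(4)}$ by Parts 5 and 6 of Proposition \ref{basicS} with $q$ replaced by $4$, so no restriction is needed. A secondary quibble: for $h=1$ the parity of $\alpha$ in $j=2^\alpha+1$ also turns out not to matter (both the consistent, $\alpha$ odd, case and the inconsistent, $\alpha$ even, case put the degree-1 primes into $\mathfrak d_j$), so your step (ii) proving $\alpha'\equiv\alpha\pmod 2$ is not actually needed; the only thing you must guard is that $\rho(0)=0$ keeps the set $\{2^\alpha+1\}$ stable, which you do show.
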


It is reasonable to expect these symmetries to persist when Carlitz's
results are generalized to arbitrary $A$ where, again, one will need to
replace $q$ with $q_\infty$.

Let $A=\Fq[T]$ and let $\mathfrak P$ be a prime of $A$ of degree
$h$. For simplicity assume that $q>2$. Let $i$ and $j$ be
two positive integers which are in the same orbit of $S_{(q^h)}$ and
are divisible by $q-1$. Suppose $i$ satisfies Carlitz's two
conditions given in Theorem \ref{car1}; then by Corollary \ref{invar1}
so does $j$. In this case we see that $\widetilde{BC}_i$ and 
$\widetilde{BC}_j$ have the same order, $-1$, at $\mathfrak P$.

\begin{question}\label{bernques}
Let $\mathfrak P$ be a prime of $A=\Fq[T]$ of degree $h$.
Suppose $i$ and $j$ are two nonnegative integers which are divisible
by $q-1$ and in the same orbit of $S_{(q^h)}$. Do 
$\widetilde{BC}_i$ and $\widetilde{BC}_j$ have the same order at $\mathfrak P$?
\end{question}

One can formulate variants of Question \ref{bernques} for arbitrary
$A$. We discuss a classical variant in our next subsection.

\subsubsection{Classical Bernoulli numbers}\label{classbern}
In this final subsection we formulate
an analog of Question \ref{bernques}
for classical Bernoulli numbers (Section \ref{discovery}) as well
as present a reasonable solution to it as an exercise in $p$-adic
continuity.

Let $p$ now be an
odd prime and let $B_n$ be the classical Bernoulli number for a positive
integer $n$, so that the functional equation for the Riemann zeta function
gives
\begin{equation}\label{specriem}
-B_n/n=\zeta(1-n)\,,
\end{equation}
where $\zeta(s)$ now is the Riemann zeta function as in Section \ref{discovery}.

Next we recall some fundamental, and famous, classical results on which
we shall build. First, we have
the classical {\it Adams congruences} which state that if $p-1$ does
not divide $n$, then $B_n/n$ is integral at $p$. The following result
now follows immediately.

\begin{prop}\label{adamscor}
Let $n$ be a positive integer which is even and not divisible by $p-1$.
Let $v_p$ denote the additive valuation associated to $p$. Then
\begin{equation}\label{adams1}
v_p(B_n)\geq v_p(n)\,.
\end{equation}\end{prop}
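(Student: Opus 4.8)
The plan is to deduce Proposition \ref{adamscor} directly from the Adams congruences together with Equation \ref{specriem}. The key point is to separate the two cases on whether $n$ is of the form giving a trivial zero or not — but since $n$ is assumed \emph{even}, the subtle case is automatically excluded, and the argument is essentially a one-line valuation estimate.

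First I would recall that the Adams congruences assert: if $p-1 \nmid n$, then $B_n/n$ is $p$-integral, i.e.\ $v_p(B_n/n) \geq 0$. Since $n$ is even and not divisible by $p-1$, this hypothesis is satisfied, so $v_p(B_n/n) \geq 0$. Now $v_p$ is a (normalized additive) valuation, hence $v_p(B_n/n) = v_p(B_n) - v_p(n)$, and the inequality $v_p(B_n) - v_p(n) \geq 0$ rearranges immediately to $v_p(B_n) \geq v_p(n)$, which is Equation \ref{adams1}. One should note that $B_n$ is nonzero for even positive $n$ (by Equation \ref{even}, since $\zeta(2m) \neq 0$), so the valuation $v_p(B_n)$ is finite and the manipulation is legitimate; for odd $n > 1$ one has $B_n = 0$ and the statement would be vacuous or require the convention $v_p(0) = \infty$, which is why the hypothesis ``$n$ even'' is imposed.

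The only thing that requires any thought is making sure the Adams congruences are being invoked in the correct form — namely as a statement about $B_n/n$ rather than $B_n$ alone — since the whole content of the proposition is precisely the improvement from ``$B_n$ is $p$-integral when $p-1 \nmid n$'' (which is the von Staudt--Clausen-type statement) to the sharper divisibility by $v_p(n)$. Thus the main (and essentially only) obstacle is bookkeeping: citing Adams in the $B_n/n$ form and observing that the hypotheses of the proposition are exactly what is needed to apply it. No genuine difficulty arises; this is the ``exercise in $p$-adic continuity'' flavored warm-up announced at the start of Subsection \ref{classbern}, setting up the analogy with Question \ref{bernques} that follows.
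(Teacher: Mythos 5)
Your proof is correct and follows exactly the route the paper intends: the proposition is stated immediately after recalling the Adams congruences with the remark that it ``follows immediately,'' and your rearrangement $v_p(B_n/n)\geq 0 \Rightarrow v_p(B_n)\geq v_p(n)$ is precisely that deduction. The extra observation that $B_n\neq 0$ for even $n>0$ is a reasonable bit of care, though the paper takes it for granted.
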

Let $t:=v_p(n)$. If $t>0$ then
say that ``$B_n$ has a trivial zero of order at least
$t$  at $p$.'' In general, if $t=v_p(B_n)$ , then 
we say that $n$ is ``regular''  with respect to
$p$; otherwise we say it is ``irregular.''

Note obviously that $n$ is regular with respect to $p$ if and only
if $v_p(B_n/n)=0$; thus we see the connection with the usual notion
of ``regularity'' from the theory of cyclotomic fields. This simple
analogy is the motivation for our use of this terminology also for zeta zeroes
at negative integers (Subsection \ref{negev}).

Next suppose that $p-1$ divides $n$, $n$ even. As above, let $\mathfrak d_n$
be the denominator of $B_n$. In this case the classical 
{\it von Staudt-Clausen Theorem} implies that $v_p({\mathfrak d}_n)=1$
and otherwise vanishes (N.B.: $v_2({\mathfrak d}_n)$ is identically
$1$). We say that
``$B_n$ has a simple pole at $p$.''

Finally, we recall the powerful {\it Kummer congruences} which state
the following: Let $i>0$ be a positive integer which is not divisible
by $p-1$. Let $j>0$ be another integer and assume that
\begin{equation}\label{kummer1}
i\equiv j\pmod{p^{b-1}(p-1)}\,,
\end{equation}
then
\begin{equation}\label{kummer2}
(1-p^{i-1}) B_i/i \equiv (1-p^{j-1}) B_j/j \pmod {p^b}\,.
 \end{equation}

Suppose now that $\rho_\ast\in S_{(p)}$ and let $m:=\rho_*(n)$ where $n$ is an
even integer. We have seen that $m\equiv n \pmod{p-1}$ and, in
particular, is also even. Question \ref{bernques} leads us to consider
whether $v_p(B_n)=v_p(B_m)$ in general. In this formulation, 
the answer is ``no''
as one sees with $p=5$, $n=2$ and $m=10=2\cdot5$. Indeed, 
the numerator of $B_2$ is $1$ and the numerator of
$B_{10}=5$ (so both $2$ and $10$ are regular with respect to $p=5$
and $B_{10}$ has a trivial zero at $5$).

Note, however,  that the classical von Staudt-Clausen result is simpler than its
function field counterpart and only depends upon divisibility of $n$ by
$p-1$. As such,  we suppress the action of $S_{(p)}$ here and concentrate
on the more elementary notion of congruence.
Under this assumption, a reasonable analogy to Question \ref{bernques}
may then be given as follows.

\begin{question} \label{bernques2}
Let $n$ be a positive even integer. Does there exist a integer $t\geq 0$,
which depends on $n$, such that if $m$ is another positive integer with
$m\equiv n \pmod{(p-1)p^t}$, then $v_p(B_n)=v_p(B_m)$?
\end{question}
The following result provides a positive answer.

\begin{prop}\label{bernques3} Let $p$ be an odd prime
and $n$ a positive even integer. 
Let 
$$t_0:=\max\{v_p(n),v_p(B_n/n)\}$$
and $t=t_0+1$. Suppose that
$m$ is another positive integer with $m\equiv n\pmod{(p-1)p^t}$. 
Then $v_p(B_n)=v_p(B_m)$.
\end{prop}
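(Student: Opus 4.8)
The plan is to split into two cases according to whether $p-1$ divides $n$, and in the main case to deduce the equality of valuations directly from the Kummer congruences. First I would dispose of the case $p-1 \mid n$: here both $n$ and $m$ are even and divisible by $p-1$ (since $m \equiv n \pmod{(p-1)p^t}$ forces $m \equiv n \pmod{p-1}$), so the von Staudt--Clausen theorem gives $v_p(\mathfrak{d}_n) = v_p(\mathfrak{d}_m) = 1$ and the numerators of $B_n$ and $B_m$ are $p$-integral; one then checks $v_p(B_n) = v_p(B_m) = -1$, using that the numerator of $B_n$ is $p$-integral and in fact a $p$-adic \emph{unit} is not needed — only that $v_p(B_n) = -v_p(\mathfrak{d}_n) = -1$ because $B_n = (\text{numerator})/\mathfrak{d}_n$ with numerator prime to $p$. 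Actually one must be slightly careful: I would argue that $v_p(B_n) = -1$ follows because $v_p(\mathfrak d_n)=1$ and the numerator of $B_n$ is prime to $p$ (a standard consequence of von Staudt--Clausen). The same holds for $m$, so $v_p(B_n) = v_p(B_m) = -1$.

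For the main case, $p-1 \nmid n$, I would use the Kummer congruences in the form stated in the excerpt. Write $t_0 = \max\{v_p(n), v_p(B_n/n)\}$ and $t = t_0 + 1$. Since $m \equiv n \pmod{(p-1)p^t}$ and $p - 1 \nmid n$, also $p-1 \nmid m$, so the Adams congruences (Proposition~\ref{adamscor}) apply to both and $B_n/n$, $B_m/m$ are $p$-integral. Apply the Kummer congruence \ref{kummer2} with $i = n$, $j = m$, and $b = t+1$ (so that $n \equiv m \pmod{p^{b-1}(p-1)}$): this gives
\begin{equation}\label{pfeq1}
(1 - p^{n-1})B_n/n \equiv (1 - p^{m-1})B_m/m \pmod{p^{t+1}}.
\end{equation}
Because $n, m \geq 2$, the factors $1 - p^{n-1}$ and $1 - p^{m-1}$ are $p$-adic units congruent to $1$ modulo $p$, so \eqref{pfeq1} yields $B_n/n \equiv B_m/m \pmod{p^{t+1}}$ — here I would note that dividing out the unit factor is legitimate and preserves the modulus, and that the extra error from replacing $1-p^{n-1}$ by $1$ is of valuation $\geq n-1 \geq 1 + v_p(B_n/n) > v_p(B_n/n)$, hence harmless. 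Since $v_p(B_n/n) = t_0 - v_p(n) \le t_0 < t+1$ — wait, more simply $v_p(B_n/n) \le t_0 \le t - 1 < t+1$ — the congruence $B_n/n \equiv B_m/m \pmod{p^{t+1}}$ and the fact that $v_p(B_n/n) < t+1$ force $v_p(B_m/m) = v_p(B_n/n)$.

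Finally I would combine this with the valuations of $n$ and $m$. We have $v_p(m) = v_p(n)$ because $m \equiv n \pmod{p^t}$ with $t > v_p(n)$: indeed $v_p(n) \le t_0 \le t-1$, so $v_p(n) < t$ and $n, m$ have the same $p$-adic valuation. Therefore
\begin{equation}\label{pfeq2}
v_p(B_m) = v_p(B_m/m) + v_p(m) = v_p(B_n/n) + v_p(n) = v_p(B_n),
\end{equation}
which completes the proof. The main obstacle, such as it is, is purely bookkeeping: choosing the modulus $b$ in the Kummer congruence large enough that the unit factor $1 - p^{n-1}$ can be stripped off without losing precision, and verifying that $t = t_0 + 1$ is the minimal clean choice for which $v_p(B_n/n)$ and $v_p(n)$ are each strictly below the modulus so that the congruences pin down the valuations exactly. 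I expect no serious difficulty beyond carefully tracking these inequalities.
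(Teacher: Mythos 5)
Your proof is correct and follows the same strategy as the paper's: combine $v_p(n)=v_p(m)$ with the Kummer congruences to obtain $v_p(B_n/n)=v_p(B_m/m)$, then add the two equalities. The one genuine addition is your explicit treatment of the case $p-1\mid n$ via von Staudt--Clausen, which the paper's terse proof glosses over even though the Kummer congruences as stated require $p-1\nmid i$; this is a worthwhile clarification rather than a different route.
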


\begin{proof} One sees immediately that $v_p(n)=v_p(m)$. Moreover,
the Kummer Congruences tell us that $v_p(B_n/n)=v_p(B_m/m)$. The result
follows.\end{proof} 

It may be interesting to find an upper bound for the integer $t$ given
in the proposition which depends only on $n$. Moreover, I do not
know how often an integer $m$ satisfying the hypothesis of Proposition
\ref{bernques2} can be obtained from $n$ by permuting its $p$-adic
digits.

It is my pleasure to acknowledge very useful input from Warren
Sinnott and Bernd Kellner.

\end{document}